\documentclass[]{amsart}

\usepackage{amsmath,dsfont}
\usepackage{amsfonts}
\usepackage{amssymb}
\usepackage{amsthm}
\usepackage{mathrsfs}
\usepackage{enumerate}
\usepackage{graphicx,color,xcolor,tikz}
\usepackage{bbold}
\usepackage[utf8]{inputenc}
\usepackage{hyperref}
\hypersetup{
    colorlinks=true,                         
    linkcolor=blue, 
    citecolor=red, 
  } 
\usepackage{enumitem,a4wide}


\newtheorem{Thm}{Theorem}
\newtheorem{Lem}[Thm]{Lemma}
\newtheorem{Prop}[Thm]{Proposition}
\newtheorem{Cor}[Thm]{Corollary}


\newcommand{\dv}{\partial}
\newcommand{\p}{\partial}
\newcommand{\brho}{{\bar\rho}}
\newcommand{\bu}{{\bar u}}
\newcommand{\tu}{{\tilde u}}
\newcommand{\bS}{{\bar S}}
\newcommand{\balpha}{{\bar \alpha}}
\newcommand{\bSigma}{{\bar \Sigma}}
\newcommand{\tSigma}{{\tilde \Sigma}}

\newcommand{\subscript}[2]{$#1 _ #2$}


\title[Analysis of compressible bubbly flows. Part II.]{Analysis of compressible bubbly flows. \\Part II : Derivation of a macroscopic model.}
\date{\today}
\author{Matthieu Hillairet}
\address{Institut Montpelli\'erain Alexander Grothendieck, Univ Montpellier, CNRS, Montpellier, France}
\email{matthieu.hillairet@umontpellier.fr}

\author{Hélène Mathis}
\address{Nantes Université, CNRS, Laboratoire de Math\'ematiques Jean Leray (LMJL - UMR 6629), 
  F-44000 Nantes, France}
\email{helene.mathis@univ-nantes.fr}

\author{Nicolas Seguin}
\address{Universit\'e de Rennes, Irmar, UMR CNRS 6625, 35042 Rennes Cedex, France}
\email{nicolas.seguin@univ-rennes1.fr}

\begin{document}
\begin{abstract}
  This paper {is the second of the series of two papers, which}
  focuses on the derivation of an averaged 1D model  for
  compressible bubbly flows. For this, we start from a microscopic
  description of the interactions between a large but finite number of
  small bubbles with a surrounding compressible fluid.
  {This microscopic model has been derived and analysed in the
  first paper. In the present one,}
provided physical
parameters scale according to the number of bubbles, we prove that
solutions to the microscopic model exist on a timespan  independent
of the number of bubbles. Considering then that we have a large number of bubbles, we propose a construction of the macroscopic
variables and derive the averaged system satisfied by these
quantities. Our method is based on a compactness approach in a strong-solution setting.   {In the last section, we propose the derivation of the Williams-Boltzmann equation corresponding to our setting. }
\end{abstract}

\maketitle

\noindent
\textbf{Key-words.} {Homogenization, two-phase flows, compressible
  Navier-Stokes equations, Cauchy theory}\\

\noindent
\textbf{2020 MCS.}  {76T05, 76T10, 35Q30}

\setcounter{tocdepth}{3}
\tableofcontents

\section{Introduction}
\label{sec_introduction}

The present work represents a straight continuation of a series of articles which
proposes to justify the construction of multiphase flow
models.
The structure of multiphase flow models can be derived formally by
applying standard conservation principles
\cite{DrewPassman99,EmbidBaer92,Gavrilyuk14,Ishii}.
However this procedure leaves aside  key-terms that have to be related
to mechanical/thermodynamical unknowns \emph{via} state laws.
To this end, a sharp description of the interactions between phases is
required.
Classical methods are based on {averaging} operators whose range of
validity is still to be investigated.
Furthermore, the action of these {averaging} operators on nonlinear quantities
requires further modelling assumptions.
From the analytical standpoint, the computations we provide herein
follow previous analysis of the first author {notably} in collaboration with
D. Bresch \cite{BrHibook,BrHi1,BrHi2,H19} {complementing previous approaches
in \cite{AZ98,Hil07,PS12}}.
In these  references, one-velocity Baer-Nunziato-like models are
derived for multiphase fluids. These computations are based on the
remark that, if the interfaces act as a "perfect" transducer (no mass
transfer, perfect transfer of mechanical stress),
combining the different phases equations yields a global one-fluid
equation.
Deriving multiphase flow models then reduces  to a thorough analysis
of highly-oscillatory solutions to the one-fluid equation.
A particular analytical  framework of mixed-regularity (smooth
velocity with discontinuous densities \cite{Desjardins97,Hoff95,Serre91}) is 
identified in \cite{BrHuang} to make this approach fully rigorous.
However, this approach is restricted to an ideal case (see
\cite{BrBuLa} for further investigations in this context).
The aim of this paper is to tackle the derivation of averaged models
in presence of jumps at interfaces.  Starting from an original microscopic model (that is {derived in the first paper \cite{HMS1}})  in which the two phases are fully separated, we derive a 1D 
averaged compressible bubbly-flow model by performing
{space averaging} operators.

\medskip

The averaged model reads as follows. It is set on the container
$\Omega = (-1,1)$ filled with a  gas/fluid mixture.
The averaged variables are the void fractions $\balpha_{f,g} \in
[0,1]$, the mean densities $\brho_{f,g} \in [0,\infty)$,
a bubble phase covolume\footnote{{The denomination covolume may be
    misleading here. In classical thermodynamic, the term covolume refers
    to the specific volume. Here the quantity $\bar f_g$ is linked to
    the volume of the gaseous phase. 
    In 3D configurations,    
    it would be related to the interfacial area.}} $\bar{f}_g \in [0,\infty)$ and the
mixture velocity $\bu \in \mathbb R$. It reads:

\begin{equation}
  \label{eq_macromodel}
    \begin{cases}
    \dv_t ( \balpha_g\bar f_g) + \dv_x (\balpha_g \bar f_g \bar u) = 0, \\
        \dv_t (\balpha_f \brho_f) + \dv_x (\balpha_f \brho_f \bu) = 0 , \\
    \dv_t (\balpha_g \brho_g) + \dv_x (\balpha_g \brho_g \bu) = 0 , \\[4pt]
       \displaystyle \dv_t \balpha_f + \bu \dv_x \balpha_f = \overline{RT}
    , \\
    \dv_t (\brho \bu) + \dv_x (\brho \bu^2) = \dv_x \bSigma ,
  \end{cases}
  \qquad 
  \text{ on $(0,T) \times \Omega$},
\end{equation}
with the compatibility conditions:
\begin{equation} \label{eq_comp}
    \balpha_f + \balpha_g = 1 , \quad
     \brho = \balpha_f \brho_f +    \balpha_g \brho_g ,
\end{equation}
and where the mixture stress tensor writes
\begin{equation}
  \label{eq_bSigma}
  \bSigma =     \frac{\mu_g\mu_f}{\balpha_f \mu_g + \balpha_g \mu_f}
  \bigg[ \dv_x \bu - \bigg(\frac{\balpha_f}{\mu_f}
  \mathrm{p}_f(\brho_f) +  \frac{\balpha_g}{\mu_g}
  \mathrm{p}_g(\brho_g) \bigg) - \bar \gamma_s \frac{\balpha_g }{\mu_g} \bar f_g\bigg],
\end{equation}
while  the void fraction relaxation term reads:
\begin{equation} \label{eq_bT}
\overline{RT} = \frac{\balpha_g\balpha_f}{\balpha_f \mu_g + \balpha_g \mu_f} \bigg[
    (\mu_g-\mu_f) \dv_x \bu + (\mathrm{p}_f(\brho_f) -
    \mathrm{p}_g(\brho_g) ) - \bar
    \gamma_s \bar f_g \bigg].
\end{equation}
In these latter identities appear the constants $\mu_f,\mu_g >0$
(resp. the functions ${\rm p}_f,{\rm p}_g$) representing the fluid and
bubble viscosities (resp. the fluid and gas pressure laws). The
constant $\bar{\gamma}_s >0$ represents the surface tension.

\medskip

The system \eqref{eq_macromodel}-\eqref{eq_comp} complemented with the
state laws \eqref{eq_bSigma}-\eqref{eq_bT} is obtained starting from
the following
\textit{microscopic} model, where the two phases are disjoint and
their interactions only appear through
the interfaces. Again, the two-phase flow is posed in the
one-dimensional domain $\Omega=(-1,1)$, filled by a liquid (the fluid,
or the continuous phase, indexed by $f$) and bubbles (the gas, or the
dispersed phase, indexed by $g$). The $N$ bubbles are described by their
centers $c_k$ and their radii $R_k$, so that the $k$-th bubble is
\[
  B_k = (x_k^-,x_k^+), \quad x_k^\pm = c_k\pm R_k, \quad \forall \, k =
  1,\ldots,N.
\]
The fluid domain is 
\[
  \mathcal F = \Omega \setminus \bigcup_{k=1}^N \overline{B_k}.
\]
For later use, we also introduce the fluid intervals
\begin{equation} \label{eq_Fk}
\mathcal F_k=(x_k^+,x_{k+1}^-) \quad  \text{for $k=0,\dots,N$}
\end{equation}
setting $x_0^+=-1$ and $x_{N+1}^-=1$.

\medskip

The fluid is supposed to be compressible and viscous, so that it is
governed by the 1D compressible Navier-Stokes system, posed in
$\mathcal F$:
\begin{align}
  \label{eq_fluid_mass}
  &\partial_t \rho_f + \partial_x ( \rho_f u_f )  = 0 , \\
  \label{eq_fluid_NS_1D}
  &\partial_t (\rho_f u_f) +\partial_x (\rho_f u_f^2) =  \partial_x
  \Sigma_f , \\
  \label{eq_fluid_tensor_sigmaf}
  &\Sigma_f  = \mu_f \partial_x u_f -  {\rm p}_f(\rho_f) ,
\end{align}
where $\rho_f$ is the density, $u_f$ the velocity and $\Sigma_f$ the
stress tensor of the fluid. Moreover, $\mu_f>0$ is the shear viscosity
and ${\rm p}_f$ is an isentropic pressure law
for the fluid:
\[
  \mathrm{p}_f(\rho_f) = \kappa_f \rho_f^{\gamma_f} ,
\]
where $\kappa_f>0$ and $\gamma_f>1$ {stands for the adiabatic exponent}.
We assume that the fluid is present at the boundary of the domain $\Omega$,
where no-slip boundary conditions are imposed:
\begin{equation}
  \label{eq_BC_uf}
  u_f(t,\pm1) = 0 .
\end{equation}
Equations for bubble kinematics and dynamics are proposed in \cite{HMS1}.  Therein, the derivation is based on the assumption that the bubbles are made of a compressible viscous fluid with an infinite shear viscosity (compared to the volumic viscosity) and that their spherical
shapes are preserved  (in three
dimensions).  This yields first that the continuity of
the velocity at the interfaces reads:
\begin{equation}
  \label{eq_continuity_velocity}
  u_f(t,x_k^\pm(t)) = \dot{c}_k(t) \pm \dot{R}_k(t) \quad \text{ for }
  k = 1,\ldots,N .  
\end{equation}
In addition, imposing that the jump of the stress tensor at the
interfaces is due to the surface tension, one obtains the following
system for the dynamics of a bubble:
\begin{align}
  \label{eq_droplet_newton_1D_C}
  &m_k \ddot c_k(t) = \Sigma_f(t,x_k^+) - \Sigma_f(t,x_k^-) , \\
  \label{eq_droplet_newton_1D_R}
  &\displaystyle \frac{m_k}{3} \ddot R_k(t) = \Sigma_f(t,x_k^-) +
  \Sigma_f(t,x_k^+)  - 2 \Sigma_k(t) , \\
    \label{eq_droplet_tensor_sigmak}
   &\displaystyle{ \Sigma_k = \mu_g \frac{\dot R_k}{R_k} - {\rm
     p}_g(\rho_k) - \dfrac{F_s}{2},}
\end{align}
where $\mu_g>0$ is the volumic viscosity of the gas, and $m_k$ and
$\rho_k$ are the mass and the density of the bubble, linked by
$m_k=2R_k\rho_k$. As a consequence of mass conservation in bubbles,
the  masses $m_k$ do not depend on time. The term $F_s$ denotes
the force due to the surface tension and writes $F_s=\gamma_s/R_k$,
$\gamma_s$ being the surface tension. In order to simplify the
analysis, we assume an isothermal equation of state in the bubbles, so
that
\begin{equation}
  \label{eq_droplet_pk}
  \pi_k := \mathrm{p}_g(\rho_k)+ \dfrac{F_s}{2} =
  \frac{(a_g)^2m_k + \gamma_s/2}{R_k} = \frac{\kappa_k}{R_k} , 
\end{equation}
where $a_g>0$ is the sound speed of the gas.
{The last form of $\pi_k$ will be used mainly for the analysis of
  the model, while the first form will be useful to interpret the various terms
  appearing in equations, notably those due to
  surface tension.  In particular,  computing surface tension effects in the microscopic system involves the quantity $1/(2R_k)$ that corresponds to the covolume of bubble $B_k$
  in our $1D$ setting.}
We point out that the system
\eqref{eq_fluid_mass}-\eqref{eq_droplet_tensor_sigmak} is not
integrable and,
specifically, does not yield any particular value for the fluid
velocity-field $u_f$.
We are then not in the Rayleigh-Plesset regime where the bubble
equations
\eqref{eq_droplet_newton_1D_C}-\eqref{eq_droplet_newton_1D_R} reduce
to ordinary differential equations in terms of $(c_k,R_k)$ and an
asymptotic pressure \cite{Wang-Smereka-03}.
We refer the reader to the companion paper \cite{HMS1} for more details on the derivation of \eqref{eq_fluid_mass}-\eqref{eq_droplet_tensor_sigmak} and the analysis of the associated Cauchy problem.  Yet,  we shall explain in further details the construction of solutions in the next section.

\medskip

The main result of this paper is to show that,  starting from solutions to 
\eqref{eq_fluid_mass}--\eqref{eq_droplet_tensor_sigmak} we obtain \eqref{eq_macromodel}--\eqref{eq_bT} by letting the number $N$ of bubbles go to infinity in case:
\begin{equation} \label{eq_scaling}
  m_k \sim N^{-1}, \quad
  R_k \sim N^{-1}, \quad
  |\mathcal F_k| \sim N^{-1}  \quad 
  \gamma_s \sim N^{-1} ,
\end{equation}
with the other parameters being fixed.
One key-difficulty in the proof is that 
the target system \eqref{eq_macromodel}--\eqref{eq_bT}  is highly nonlinear.  Specifically, products between volume fractions and other (fluid or gas) unknowns are ubiquitous.  To obtain such nonlinear terms,  it appears that strong convergences  of densities or gas covolume in sufficiently smooth spaces are necessary.  Hence, with this approach,  we face two key-difficulties: 
\begin{itemize}
\item to prove that the scaling regime \eqref{eq_scaling} holds on a timespan independent of the number $N$ of bubbles, 
\item to define the macroscopic unknowns and especially, the fluid and gas densities $\bar{\rho}_f,\bar{\rho}_g$ and the gas covolume $\bar{f}_g.$
\end{itemize}

The first item in this list is the content of the topic of the next section.  Therein, we 
consider initial data  that are constructed as follows.  Firstly, we fix fluid initial data $(\rho_f^0,u_f^0) \in H^1(\Omega) \times H^1_0(\Omega)$ that are thus defined globally on $\Omega.$ We assume further that they are far from vacuum.  Secondly, we fix initial distributions of centers/radii $(c_k^0,R_k^0)_{k=1,\ldots,N}$ such that \eqref{eq_scaling} holds. We complement then the microscopic system
\eqref{eq_fluid_mass}--\eqref{eq_droplet_tensor_sigmak}  with initial
conditions so that the initial bubble velocities match the velocities prescribed by the fluid on the boundaries. This reads:
\begin{align}
  \label{eq_initbubble}
  & c_k(0) = c_k^0 \quad R_k(0) = R_k^0, &&  \text{ for $k=1,\ldots,N$},\\
  \label{eq_initfluid}
  & u(0,\cdot) = u_f^0 \quad \rho(0,\cdot) = \rho_f^0, && \text{ on $\mathcal F^0$},
\end{align}
and
\begin{align}
  \label{eq_initbubble1}
  & \dot{c}^0_k = \dfrac{u_f^0(c_k^0 + R_k^0) + u_f^0(c_k^0 - R_k^0)}{2}, && \text{ for $k=1,\ldots,N,$}
  \\  \label{eq_initbubble2}
  & \dot{R}^0_k = \dfrac{u_f^0(c_k^0 + R_k^0) - u_f^0(c_k^0 - R_k^0)}{2}, && \text{ for $k=1,\ldots,N.$}
\end{align}
The main result of Section \ref{sec_estimates} is then that there exists a classical solution to \eqref{eq_fluid_mass}--\eqref{eq_droplet_tensor_sigmak} on a timespan that depends only on fluid initial data and the parameters quantifying initially assumption \eqref{eq_scaling}. 
To obtain this result, we combine classical energy and regularity estimates for Navier Stokes equations. We remind that, in this strategy, one classically uses extra regularity thanks to the form of the stress tensor $\Sigma_f.$ However, such regularity estimates should depend on the geometry (and then on $N$). To overcome this difficulty, we propose to consider suitable extensions of $\Sigma_f$ (resp. $\Sigma_g$) on the complementary gas (resp.  fluid) domain.  In this way, the extension is defined on a fixed domain and the regularity gain is independent of the geometry. We point out here that
contrary to the classical approach in the topic of homogenization of
multidimensional compressible Navier Stokes equations in perforated
domains \cite{YS18,FNT10},  our construction takes advantage of the
information on the moments on $\partial B_k$
of the fluid stress tensor  that are provided by the bubble
equations. 

The second key-difficulty of our approach is tackled in {Section \ref{sec_homog-probl}}.  Once solutions to \eqref{eq_fluid_mass}-\eqref{eq_droplet_tensor_sigmak} are constructed on a time-interval that does not depend on $N,$ we consider the behavior of these solutions for large $N$. In particular, we look for definitions of the unknowns that are involved in the macroscopic system  \eqref{eq_macromodel}--\eqref{eq_bT}.  Volumic fractions as well as global velocity-fields are obtained classically by considering indicator functions or suitably extended vector-fields (see Proposition \ref{prop_transport_chi} and Proposition \ref{prop_CV_uf}).  However,  the issue is more involved when going to density and covolume unknowns.  Indeed,  at the discrete level,  fluid density and bubble density, for instance, are defined {\em a priori} on dispersed subdomains only. This cannot yield convergence with sufficient regularity. To get better convergence results,  we decide to construct suitable extensions. 
For this we proceed in two steps. Firstly, we ensure that the initial conditions for   \eqref{eq_fluid_mass}-\eqref{eq_droplet_tensor_sigmak} enable to define smooth extended densities and covolume (see Proposition \ref{prop_cstr_id}). Then, we propagate this regularity with a well-chosen extended flow (see Proposition  \ref{prop_tilde_rho_fN} and Proposition \ref{prop_tilde_rhogN_fgN}). 
With this construction at-hand, the derivation of \eqref{eq_macromodel}--\eqref{eq_bT} is plain sailing.  

\medskip

In our construction, we start from initial data for the macroscopic system and define a sequence of initial conditions for the microscopic system that are compatible with the scaling \eqref{eq_scaling} and enable to construct extended densities. 
It turns out that this requires further assumption on initial data that we explain now. 
We recall that initial data for the macroscopic system consists in:
\begin{itemize}
\item initial  fluid and gas densities : $\brho_f^0,\brho_g^0$,
\item initial fluid and gas void fractions $\balpha_f^0,\balpha_g^{0}$ 
\item an initial velocity of the two-phase mixture  $\bu^0$,
\item an initial gas covolume $\bar{f}_g^{0}.$
\end{itemize}
It is worth noting that all these functions are defined for $x$ in
$\Omega$, since both phases are no longer separated at the macroscopic
scale.   We shall remain at the regularity level of classical solution and require
that all these initial conditions are $H^1(\Omega).$ 
For our construction, we require that initial densities and volumic fraction 
satisfy:
\begin{align} \label{eq_cond0}
&  \rho_{min} \leq \min ( \brho_f^0,  \brho_g^0)  \\
&  \alpha_{min} \leq \min ( \bar{\alpha}_f^0,\bar{\alpha}_g^0) && \bar{\alpha}_g^0 + \bar{\alpha}_f^0 = 1 
\label{eq_cond1}
\end{align}
for some strictly positive constants $\rho_{min},\alpha_{min}.$ The first condition means that we are away from void. The second one  expresses that there is a mixture of both phases everywhere in $\Omega.$ Note that the second conditions implie simultaneously that 
\begin{equation} \label{eq_cons0}
\max (\|\balpha_g^0\|_{L^{\infty}(\Omega)}, \|\balpha_f^0\|_{L^{\infty}(\Omega)}) \leq 1- \alpha_{min}.
\end{equation}
Concerning,  $\bar{f}_{g}^0,$ we will require that:
\begin{align} \label{eq_cond2}
& f_{min} \leq  \bar{f}_g^0,   \qquad  \balpha_g^0 \bar{f}_g^0  \in \mathbb P(\Omega), 
\end{align}
where $f_{min}$ is a strictly positive constant.  To explain these latter conditions, we point out that in the 1D case the covolume of bubbles is proportional to the inverse radius.  So $f_{min}$ is a bound from above on the initial radius of bubbles and, since we expect $\balpha_{g}^0 \bar{f}_g^0$
to be the limit of the indicator function of bubble domains multiplied by the inverse radius of bubbles, a straightforward computations yields that it is a positive function whose total mass is $1$, hence a probability density.  

\medskip

{Though we propose a converse interpretation to the classical one,  the multiphase  system we consider in this paper enters the family of sprays as  studied by Williams in \cite[Section 11]{Williams}.  With this standpoint,  a classical tool to analyze the behavior of the dispersed phase is the so-called "Williams-Boltzmann" equation which describes the time-evolution of the particle-distribution function of the dispersed phase. 
In the last section of this paper, we derive what would be the equivalent equation in our setting.  It is worth to mention that this is no supplementary equation but simply a rephrasing of the bubble-gas equation that we derived previously. In particular,  herein the bubble-gas velocities are correlated to their position and drag forces are at equilibrium.  We do neither have collision or creation of bubbles. Hence, the only term to be taken into account is the "evaporation" term which should be understood as compression/expansion term herein in our compressible setting. 
}

\medskip

In brief, the outline of the paper is as follows.
In the next section, we prove that solutions to the microscopic system
\eqref{eq_fluid_mass}--\eqref{eq_droplet_tensor_sigmak} with well-prepared initial
data do exist on a timespan 
independent of $N$, see {\bf
  Theorem \ref{thm_existence}}.
In {\bf Section \ref{sec_homog-probl}} and {\bf Section
  \ref{sec_deriv-macr-model}}, we tackle the asymptotics of these
solutions when $N \to \infty$.
In the last section, we discuss an alternative approach based on using particle-distribution functions for the bubbles.
In appendices, we provide some technical computations involved in the
construction of solutions to the microscopic model.\

\medskip

{\bf Acknowledgement.} 
The first author acknowledges support of the Institut Universitaire de France and project "SingFlows" ANR-grant number: ANR-18-CE40-0027.   This paper was finished while M.H. was benifiting a "subside à savant" from Université Libre de Bruxelles.  He would like to thank  the mathematics department at ULB for its hospitality.

\section{Local Cauchy theory for the microscopic system}
\label{sec_estimates}

{In this section,  we forget temporarily our homogenization goal.  We focus on the microscopic model \eqref{eq_fluid_mass}--\eqref{eq_droplet_tensor_sigmak} in the scaling \eqref{eq_scaling} and we address the existence of solutions with lifespan independent of the number $N$ of bubbles provided initial data are constructed as in \eqref{eq_initbubble}--\eqref{eq_initbubble2}.  In particular, we fix ($\rho_f^0,u_f^0) \in H^1(\Omega) \times H^1_0(\Omega)$ throughout the section.  We assume these global fluid data satisfy:
\begin{equation}
  \label{it_bound_rho0}
  2\underline{\rho}_{\infty} \leq \rho_f^0 \leq \overline{\rho}_{\infty}/2 \text{ on $\Omega$}
\end{equation}
for some pair $(\underline{\rho}_{\infty},\overline{\rho}_{\infty}) \in (0,\infty)^2.$}

\medskip

\medskip

{To make precise our main result, we start by giving a quantified version of assumption \eqref{eq_scaling} that we assume to hold initially.}  Firstly, we fix that bubbles characteristics enjoy the property:
\begin{enumerate}[label=(\subscript{IC}{{\arabic*}})]
  \setcounter{enumi}{-1}
\item \label{it_bound_mk0} $M_\infty\leq N m_k , N \kappa_k \leq
  (M_\infty)^{-1},$ $k=1,\ldots,N$,\\[-8pt]
\item\label{it_bound_R0}
  $2d_\infty\leq N R_k^0\leq (2d_\infty)^{-1}$, $k=1,\ldots,N$,  \\[-8pt]
\item \label{it_bound_F0}
  $2d_\infty\leq N|\mathcal{F}_k^0| \leq (2d_\infty)^{-1}$,
  $k=0,\ldots,N$,
\end{enumerate}
Here $M_{\infty},d_{\infty}$ are strictly positive constants independent of $N.$ 
We recall the convention \eqref{eq_Fk}
for the definition of $\mathcal F_k^{0}$ (adapted to notations for initial data).
Their union constitutes the initial fluid domain $\mathcal F^0$.
The physical parameters $(\mu_f,\mu_g)$ and pressure laws are fixed
independent of $N$. 
With these conventions, the main result of this section reads:
\begin{Thm}
  \label{thm_existence}
  Let initial conditions to
  \eqref{eq_fluid_mass}--\eqref{eq_droplet_tensor_sigmak} be
  constructed as in \eqref{eq_initbubble}--\eqref{eq_initbubble2}.
  Assume further that 
  parameters  $(m_k,\kappa_k)_{k=1,\ldots,N}$  and  initial bubble
  distributions $(c_k^0,R_k^0)_{k\in \{1,\dots, N\}}$ satisfy
  ~\ref{it_bound_mk0}--\ref{it_bound_F0}.
  Then, there exists $T_{\infty} >0$ depending only on 
  \begin{equation}
    \label{eq_listparametre}
    M_{\infty},d_{\infty},\underline{\rho}_{\infty},\bar{\rho}_{\infty},\|u_f^0\|_{H^1(\Omega)},\|\rho_f^0\|_{H^1(\Omega)},
  \end{equation}
  such that there exists a solution to
  \eqref{eq_fluid_mass}--\eqref{eq_droplet_tensor_sigmak}  on
  $(0,T_{\infty}).$
\end{Thm}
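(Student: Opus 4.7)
The plan is a continuation argument: short-time existence of a regular solution to the coupled free-boundary system \eqref{eq_fluid_mass}--\eqref{eq_droplet_tensor_sigmak} is available from the Cauchy theory of the companion paper \cite{HMS1}, so what must be established is that any local solution can be extended as long as $N R_k$, $N|\mathcal F_k|$ and the fluid density remain bounded away from $0$ and $\infty$, and as long as $\|u_f\|_{H^1}$ and $\|\rho_f\|_{H^1}$ stay bounded. The whole work is then to produce a priori bounds on these master quantities whose constants depend only on \eqref{eq_listparametre}, and to close them into a quadratic (or exponential) Gronwall-type differential inequality; the lifespan $T_\infty$ then follows by standard maximal-time arguments.

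The first layer of estimates is the natural energy identity. Testing \eqref{eq_fluid_NS_1D} against $u_f$ on each $\mathcal F_k$, the boundary terms $[\Sigma_f u_f]_{x_k^-}^{x_k^+}$ are, thanks to the kinematic condition \eqref{eq_continuity_velocity} and the Newton equations \eqref{eq_droplet_newton_1D_C}--\eqref{eq_droplet_newton_1D_R} tested against $\dot c_k$ and $\dot R_k$, reassembled into a clean total energy balance: time derivative of the fluid kinetic and internal energy, plus bubble translational and radial kinetic energies $\tfrac12 m_k \dot c_k^2 + \tfrac{m_k}{6}\dot R_k^2$, plus bubble internal energies $\kappa_k \log R_k$, equals the negative of the dissipation $\mu_f\|\partial_x u_f\|_{L^2(\mathcal F)}^2 + \sum_k 2\mu_g |\dot R_k|^2/R_k$. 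Combined with mass conservation $m_k = 2 R_k \rho_k$ and \ref{it_bound_mk0}--\ref{it_bound_F0}, this rules out simultaneous collapse of all bubbles and provides uniform $L^\infty_t L^2_x$ control of $u_f$.

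The delicate part is higher regularity. In the standard one-dimensional Navier--Stokes analysis one exploits the better regularity of the effective viscous flux $\Sigma_f = \mu_f\partial_x u_f - {\rm p}_f(\rho_f)$; naively doing this on each $\mathcal F_k$ produces boundary contributions whose total size grows with $N$. Following the hint announced in the introduction, I would instead construct a global extension $\widetilde\Sigma$ of $\Sigma_f$ on the whole interval $\Omega$. The key observation is that the bubble equations determine the individual traces $\Sigma_f(t,x_k^\pm)$: from \eqref{eq_droplet_newton_1D_C}--\eqref{eq_droplet_newton_1D_R} one reads off $\Sigma_f(t,x_k^+) - \Sigma_f(t,x_k^-) = m_k \ddot c_k$ and $\Sigma_f(t,x_k^+) + \Sigma_f(t,x_k^-) = \tfrac{2 m_k}{3}\ddot R_k + 2\Sigma_k$. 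A simple linear interpolation of these two values across each bubble $B_k$ defines $\widetilde\Sigma$ on the gas phase, and because the construction is local in each bubble and one-dimensional, the $H^1(\Omega)$ norm of $\widetilde\Sigma$ is controlled by the $H^1(\mathcal F)$ norm of $\Sigma_f$ plus bubble-equation contributions, with constants depending only on $d_\infty$, $M_\infty$ and on the energy functional, not on the number of bubbles.

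From there, I would rewrite the fluid system as a transport-diffusion equation for $\widetilde\Sigma$ and run a Gronwall argument jointly with the 1D Kazhikhov--Shelukhin-type pointwise density estimates; this yields a uniform lower and upper bound on $\rho_f$, and then an $H^1$ bound on $\partial_x u_f = (\widetilde\Sigma + {\rm p}_f(\rho_f))/\mu_f$. The Sobolev embedding $H^1(\Omega) \hookrightarrow L^\infty(\Omega)$ combined with \eqref{eq_continuity_velocity} translates these bounds into $L^\infty_t L^\infty$ control of $\dot c_k$ and $\dot R_k$, propagating the lower bounds on $R_k$ and $|\mathcal F_k|$ on a short time. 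The main obstacle I expect is precisely the analysis of $\widetilde\Sigma$: one must check that the second time derivatives $\ddot c_k$ and $\ddot R_k$ appearing in its traces can be either eliminated or absorbed (typically by integration by parts in time inside the Gronwall argument, using the energy bounds on $\dot c_k$ and $\dot R_k$), so that the final differential inequality on the master norm is genuinely quadratic in it and produces a lifespan $T_\infty>0$ depending only on the parameters listed in \eqref{eq_listparametre}.
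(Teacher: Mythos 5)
Your proposal captures the two structural ingredients the paper actually uses: (i) local well-posedness from the companion paper \cite{HMS1} plus a continuation argument driven by $N$-uniform a priori bounds, and (ii) the key idea of extending the effective viscous flux $\Sigma_f$ to the whole interval by affine interpolation across each bubble using the trace data $\Sigma_f(x_k^\pm)$ determined by Newton's laws. The identification of the master quantities (uniform bounds on $NR_k$, $N|\mathcal F_k|$, $\rho_f$, and $H^1$ control of $u_f$, $\rho_f$) is also in line with the paper's continuation criteria \ref{it_bound_R}--\ref{it_H1_bound_tensors}.

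However, there are two genuine gaps. First, your mechanism for handling $\ddot c_k$ and $\ddot R_k$ in the traces of $\widetilde\Sigma_f$ --- ``integration by parts in time inside the Gronwall argument, using the energy bounds on $\dot c_k$ and $\dot R_k$'' --- does not work. Integrating $\int_0^T m_k |\ddot c_k|^2\,\mathrm{d}t$ by parts in time produces third time-derivatives $\dddot c_k$ which are not controlled. The correct mechanism, which the paper uses and which is essential, is a second-order energy estimate obtained by testing the momentum equation against the material derivative $\partial_t u_f + u_f\partial_x u_f$ (Proposition~\ref{prop_semi_norm_dissipation}); through the kinematic condition \eqref{eq_continuity_velocity} and Newton's laws, the resulting boundary terms produce $\sum_k m_k(|\ddot c_k|^2 + \tfrac13|\ddot R_k|^2)$ as dissipation on the left-hand side. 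This is what gives the time-integrated control $\int_0^T\sum_k m_k(|\ddot c_k|^2+|\ddot R_k|^2)\,\mathrm{d}t \leq K$ that closes the estimate for $\|\widetilde\Sigma_f\|_{L^2_t H^1_x}$.

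Second, you miss the need for a \emph{dual} extension of the bubble stress tensor. The $H^1$ norm of $\widetilde\Sigma_f$ obtained in Proposition~\ref{prop_estima_sigmaTf} contains terms like $\kappa_k^2/R_k$ and $\mu_g^2|\dot R_k|^2/R_k$ with negative powers of $R_k$; controlling these requires a symmetric construction $\widetilde\Sigma_g$ that interpolates the bubble stress across the fluid intervals $\mathcal F_k$ (Proposition~\ref{prop_tSigmag}). It is precisely the $L^\infty$ bound on $\widetilde\Sigma_g$, hence on $\mu_g\dot R_k/R_k - \kappa_k/R_k$ (Corollary~\ref{cor_est_sigmag}), that yields the Lipschitz-type bound on $\log R_k$ needed to keep $R_k$ and $|\mathcal F_k|$ away from zero on a uniform timespan. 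Without this second extension, there is no way to propagate the geometric bounds \ref{it_bound_R}--\ref{it_bound_F}.
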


What remains of this section is devoted to the proof of this theorem.
From now on, we pick a family of physical parameters
and bubble centers/radii satisfying the assumptions of {\bf Theorem
  \ref{thm_existence}} and we construct initial data for  \eqref{eq_fluid_mass}--\eqref{eq_droplet_tensor_sigmak}.
  
\medskip
  
 In the companion paper \cite{HMS1}, we prove local-in-time existence
and uniqueness of classical solutions to the Cauchy problem associated
with \eqref{eq_fluid_mass}--\eqref{eq_droplet_tensor_sigmak}.  In this
moving-domain setting, classical solution means broadly that:
\begin{itemize}
\item the motion of the bubbles is $H^2(0,T)$ ({\em i.e.} $(c_k,R_k) \in H^2(0,T)$),
\item $u$ is $H^1_tL^2_x$ and $L^2_t H^2_x$ in the fluid domain,
\item $\rho$ is $H^1_{t,x}$ in the fluid domain.
\end{itemize}
Existence and uniqueness of solutions on a lifespan $(0,T_0)$ is obtained for initial data such that
\begin{itemize}
\item there is no overlap of the bubbles,
\item initial fluid data are $H^1$ in the fluid domain with strictly positive density,
\item initial fluid and bubble velocities match at interfaces (so that \eqref{eq_initbubble1}--\eqref{eq_initbubble2} hold true).
\end{itemize}
It is also worth noting that the time $T_0$ is uniform in data satisfying uniform bounds from below for the distance between bubbles, the minimal radius of
bubbles, the minimum density and also the size of initial fluid
velocity and density in $H^1$-spaces.  We refer to \cite{HMS1} for
more precise and quantitative statements.

\medskip

So, under the assumptions of {\bf Theorem \ref{thm_existence}}, the local-in-time existence result of \cite{HMS1} 
yields a solution on a time-interval $(0,T_0)$ that depends on the
list of parameters \eqref{eq_listparametre} but also on $N.$
To rule out this dependency, we construct $T_{\infty}$ such that as long as $t < T_{\infty}$ the
solution 
\[
  (\rho_f(t,\cdot),u_f(t,\cdot),(c_k(t),R_k(t), \dot c_k(t), \dot R_k(t))_{k\in
    \{1,\dots, N\}})
\] 
 yields an  initial condition that is compatible with the Cauchy theory of \cite{HMS1}
 with an associated existence time independent of $t.$
 We emphasize that any classical solution does not allow overlap of the bubbles and ensures identity \eqref{eq_initbubble1}--\eqref{eq_initbubble2} is satisfied at any time.   Controlling the existence time associated with the value of the solution at time $t$ -- considered as an initial data -- reduces to obtaining uniform $H^1$ 
 bound for the velocity
 field and for the density,  uniform bound from above and from below on the fluid density,  the radius of the bubbles and the length of fluid segments.
 
\medskip
 
Our approach relies on a suitable combination of energy and regularity
estimates for the coupled system
\eqref{eq_fluid_mass}--\eqref{eq_droplet_tensor_sigmak}. So,  we
recall in the next sections the classical estimates that are
associated with
\eqref{eq_fluid_mass}--\eqref{eq_droplet_tensor_sigmak}. We will pay
special attention to obtain estimates independent on $N.$ This
will be particularly challenging for regularity estimates. In
particular, we shall study the regularity of fluid velocity-fields
that can be gained through the integrability of the stress tensor by
working on extensions of fluid unknowns on bubble domains and
conversely. {A tricky part of the proof is that we can obtain these sharp bounds under the condition that we have already {\em a priori} bounds.  
So, we implement a continuation argument.  
This continuation argument is explained in the last part of the section.
However, the extensive proof is rather
long and technical. Hence, the last subsection reduces to a roadmap of the proof that is detailed further in
{\bf Appendix \ref{app_proof-prop-refpr}}.}
  
\subsection{Classical estimates}
\label{sec_classical-estimates}

We introduce the function $\mathrm{q}_f:[0,\infty) \to [0,\infty) $ defined by
\begin{equation}
  \label{eq_qTL}
  \mathrm{q}_f'(s)s-\mathrm{q}_f(s) = \mathrm{p}_f(s),
\end{equation}
which is conjugate of the fluid pressure. In other words, the function
$\mathrm{q}$ represents the volumic internal energy of the fluid.
Considering an isentropic pressure law, it yields
\begin{equation*}
  \mathrm{q}_f(s) =\dfrac{ a_f s^{\gamma_f}}{\gamma_f-1}.
\end{equation*}

We can now state the total energy equation. In the bracket of the statement below, the first
term is the total energy of the fluid, while the second and the third
terms respectively are the kinetic energy and the internal energy of
the bubbles.
\begin{Prop}
  \label{prop_estim_nrj}
  For any reference radius $R_\mathrm{ref}>0$, it holds
  \begin{equation}
    \label{eq_estim_nrj}
    \begin{aligned}
      \dfrac{\mathrm{d}}{\mathrm{d}t} &\left[ \int_{\mathcal F}
        \left(\rho_f \dfrac{|u_f|^2}{2}+\mathrm{q}_f(\rho_f) \right)
        \mathrm{d}x + \sum_{k=1}^N m_k \left(\dfrac{|\dot
            c_k|^2}{2} + \dfrac{|\dot R_k|^2}{6}  \right) \right.\\
      &\left.-{2} \sum_{k=1}^N \kappa_k \ln\left(
          \dfrac{R_k}{R_\mathrm{ref}}\right) \right] + \int_{\mathcal
        F} \mu_f |\p_x u_f|^2 \mathrm{d}x+ {2}\mu_g \sum_{k=1}^N
      \dfrac{|\dot R_k|^2}{|R_k|}=0.
  \end{aligned}
  \end{equation}
\end{Prop}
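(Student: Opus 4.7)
The plan is to perform a standard energy estimate by testing the fluid momentum equation against $u_f$, integrating over the moving fluid domain $\mathcal F(t) = \bigcup_k \mathcal F_k(t)$, and coupling with the bubble equations by means of boundary terms. The key observation is that the interface condition \eqref{eq_continuity_velocity} says that the boundaries of $\mathcal F_k$ move precisely at the fluid velocity $u_f(t,x_k^\pm)$, so Reynolds transport theorem can be applied with this transport speed and will cancel the advective flux $\rho_f u_f^3/2$ arising from the conservative form of the kinetic-energy equation.

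More concretely, I would proceed in three steps. First, combining \eqref{eq_fluid_mass} and \eqref{eq_fluid_NS_1D} in the usual way yields the local balance $\partial_t(\rho_f u_f^2/2) + \partial_x(\rho_f u_f^3/2) = \partial_x(u_f\Sigma_f) - \Sigma_f\partial_x u_f$, while using \eqref{eq_qTL} and \eqref{eq_fluid_mass} gives $\partial_t \mathrm{q}_f(\rho_f) + \partial_x(\mathrm{q}_f(\rho_f) u_f) = -\mathrm{p}_f(\rho_f)\partial_x u_f$. Integrating each identity on $\mathcal F_k(t)$, applying Reynolds with boundary speed $u_f(t,x_k^\pm)$, and summing over $k$, the purely advective traces $\rho_f u_f^3/2$ and $\mathrm{q}_f u_f$ cancel, and $\Sigma_f\partial_x u_f + \mathrm{p}_f\partial_x u_f = \mu_f|\partial_x u_f|^2$ produces the fluid dissipation term. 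Using the no-slip condition \eqref{eq_BC_uf} to discard the traces at $x=\pm 1$, one is left with
\begin{equation*}
\frac{\mathrm d}{\mathrm dt}\int_{\mathcal F}\!\left(\rho_f\frac{|u_f|^2}{2}+\mathrm{q}_f(\rho_f)\right)\mathrm dx
+\int_{\mathcal F}\mu_f|\partial_x u_f|^2\,\mathrm dx
=-\sum_{k=1}^N\bigl[u_f\Sigma_f\bigr]_{x_k^-}^{x_k^+}.
\end{equation*}

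Second, I would rewrite each interfacial bracket using \eqref{eq_continuity_velocity}, which gives the algebraic identity
\begin{equation*}
\bigl[u_f\Sigma_f\bigr]_{x_k^-}^{x_k^+}
=\dot c_k\bigl(\Sigma_f(x_k^+)-\Sigma_f(x_k^-)\bigr)
+\dot R_k\bigl(\Sigma_f(x_k^+)+\Sigma_f(x_k^-)\bigr).
\end{equation*}
Plugging in \eqref{eq_droplet_newton_1D_C}--\eqref{eq_droplet_newton_1D_R}, the first summand becomes $\dot c_k \cdot m_k\ddot c_k=\frac{\mathrm d}{\mathrm dt}(m_k|\dot c_k|^2/2)$ and the second gives $\frac{\mathrm d}{\mathrm dt}(m_k|\dot R_k|^2/6)+2\dot R_k\Sigma_k$. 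This is the step where bubble dynamics enter and the bubble kinetic energy emerges.

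Third, I would process the residual $2\dot R_k\Sigma_k$ using \eqref{eq_droplet_tensor_sigmak} and, crucially, the explicit isothermal pressure identity \eqref{eq_droplet_pk} to get $\Sigma_k = \mu_g\dot R_k/R_k -\kappa_k/R_k$. Thus $2\dot R_k\Sigma_k = 2\mu_g|\dot R_k|^2/R_k -\frac{\mathrm d}{\mathrm dt}\bigl(2\kappa_k\ln(R_k/R_{\mathrm{ref}})\bigr)$, which accounts both for the bubble dissipation term and the logarithmic bubble internal-energy contribution. Summing over $k$ and reorganizing conservative terms on the left-hand side yields exactly \eqref{eq_estim_nrj}. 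The only delicate points in the argument are bookkeeping: tracking signs and orientation in Reynolds' formula on the disjoint intervals $\mathcal F_k$, and exploiting the $c_k/R_k$ splitting of $[u_f\Sigma_f]_{x_k^-}^{x_k^+}$ so that \eqref{eq_droplet_newton_1D_C} and \eqref{eq_droplet_newton_1D_R} plug in cleanly; the identification of the logarithm rests entirely on the affine-in-$1/R_k$ structure of $\pi_k$ in \eqref{eq_droplet_pk}.
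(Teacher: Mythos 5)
Your proposal is correct and follows essentially the same approach as the paper: multiply the momentum equation by $u_f$, integrate over the moving fluid domain so that the convective fluxes cancel at the interfaces, decompose the resulting boundary terms via the $\dot c_k/\dot R_k$ splitting of $[u_f\Sigma_f]_{x_k^-}^{x_k^+}$, and substitute the bubble equations \eqref{eq_droplet_newton_1D_C}--\eqref{eq_droplet_tensor_sigmak} together with the identity $\Sigma_k=\mu_g\dot R_k/R_k-\kappa_k/R_k$ to produce the kinetic, dissipative, and logarithmic internal-energy contributions. The only difference is presentational: you invoke Reynolds' transport theorem explicitly, while the paper packages the same cancellation into the convective-derivative identity $\frac{\mathrm d}{\mathrm dt}\int_{\mathcal F_k}\rho_f|u_f|^2/2=\int_{\mathcal F_k}\rho_f(\partial_t u_f+u_f\partial_x u_f)u_f$.
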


\begin{proof}
  First let multiply the Navier--Stokes equation
  \eqref{eq_fluid_NS_1D} by the velocity $u_f$ and integrate over the
  fluid domain $\mathcal F$. Using the mass conservation equation
  \eqref{eq_fluid_mass}, it yields
\begin{equation}
  \label{eq_estim_nrj_1}
  \int_{\mathcal F} \rho_f \left( \p_t u_f + u_f \p_x u_f\right) u_f
  \mathrm{d}x = \int_{\mathcal F} u_f \p_x \Sigma_f   \mathrm{d}x .
\end{equation}
Since the mass conservation~\eqref{eq_fluid_mass} gives
  \begin{equation*}
  \dfrac{\mathrm{d}}{\mathrm{d}t}  \int_{x_k^+}^{x_{k+1}^-}
  \rho_f \dfrac{|u_f|^2}{2} \mathrm{d}x =
   \int_{x_k^+}^{x_{k+1}^-} 
   \rho_f (\p_t u_f + u_f \p_x u_f) u_f \mathrm{d}x,
 \end{equation*}
 one obtains, using an integration by part of  the right-hand side,
 \begin{equation*}
     \dfrac{\textrm{d}}{\textrm{d}t}  \int_{\mathcal F} \rho_f
     \dfrac{|u_f|^2}{2} \textrm{d}x = T_1 -T_2-T_3,
   \end{equation*}
   with
   \begin{align*}
     T_1 &= \sum_{k=0}^N \Sigma_f (x_{k+1}^-) u_f(x_{k+1}^-) -
           \Sigma_f (x_{k}^+) u_f(x_{k}^+),\\
     T_2 &= \int_{\mathcal F} \mu_f |\p_x u_f|^2 \mathrm{d}x ,\\
     T_3 &= -\int_{\mathcal F} \mathrm{p}_f(\rho_f) \p_x u_f  \mathrm{d}x,
   \end{align*}
   where the terms $T_2$ and $T_3$ come from the definition \eqref{eq_fluid_tensor_sigmaf} of the
   stress $\Sigma_f.$ 

   Using the boundary conditions \eqref{eq_BC_uf} and, after,  the
   continuity of the velocities at the droplet interfaces
   \eqref{eq_continuity_velocity}, the term $T_1$ can be rewritten as
\begin{equation*}
  \begin{aligned}
    T_1 &= -\sum_{k=1}^N \big( \Sigma_f (x_k^+) u_f(x_k^+) -
    \Sigma_f(x_k^-)u_f(x_k^-) \big) \\
    &= -\sum_{k=1}^N \dot c_k\left( \Sigma_f (x_k^+)-\Sigma_f
      (x_k^-)\right) + \dot R_k \left( \Sigma_f (x_k^+)+\Sigma_f
      (x_k^-)\right).
  \end{aligned}
\end{equation*}
Finally the droplets motion equations
\eqref{eq_droplet_newton_1D_C}-\eqref{eq_droplet_newton_1D_R} and the
definition of the droplet pressure law \eqref{eq_droplet_pk} yield (whatever the 
value of $R_{ref} >0$):
\begin{equation*}
  \label{eq_eqtim_nrj_T1_3}
  T_1 =  - \dfrac{\mathrm{d}}{\mathrm{d}t}  \left[
    \left( \sum_{k=1}^N m_k \dfrac{|\dot c_k|^2}{2}
      + \dfrac{m_k}{3}\dfrac{|\dot R_k|^2}{2}\right) - {2}
    \sum_{k=1}^N \kappa_k\ln\left( \dfrac{R_k}{R_\mathrm{ref}}\right)\right]
  -{2}\mu_g \sum_{k=1}^N \dfrac{|\dot R_k|^2}{R_k}.
\end{equation*}
We now turn to the term $T_3$. By the definition \eqref{eq_qTL} of the
function $\mathrm{q}_f$, and by the mass conservation equation
\eqref{eq_fluid_mass}, it holds
\begin{equation*}
  \p_t \mathrm{q}_f (\rho_f) + \p_x (\mathrm{q}_f (\rho_f)u_f)=-\mathrm{p}_f(\rho_f)\p_x u_f.
\end{equation*}
Because the fluid domain evolves with the velocity $u_f$, $T_3$ can be
recovered
\begin{equation*}
  \dfrac{\mathrm{d}}{\mathrm{d}t} \int_{\mathcal F} q(\rho_f) \mathrm{d}x=
  -\int_{\mathcal F} \mathrm{p}_f(\rho_f) \p_x u_f\mathrm{d}x= T_3.
\end{equation*}
One deduces the final estimate \eqref{eq_estim_nrj} combining the
terms $T_1$, $T_2$ and $T_3$.
\end{proof}

In the regime of initial data specified in this section, we obtain the
following corollary:
\begin{Cor} \label{corollaire_bound_energy}
  If initial data are constructed as in
  \eqref{eq_initbubble}-\eqref{eq_initbubble2} and satisfy
  \ref{it_bound_mk0}-\ref{it_bound_R0}-\ref{it_bound_F0}, there exists
  a constant $E_0$ depending only on the list of parameters
  \eqref{eq_listparametre} such that any classical solution to
  \eqref{eq_fluid_mass}--\eqref{eq_droplet_tensor_sigmak} on some
  time-interval $[0,T]$ satisfies:
  \begin{multline}
    \label{cor_bound_energy}
    \displaystyle \int_{\mathcal F} \left( \rho_f
      \dfrac{|u_f|^2}{2}+q(\rho_f)\right)\mathrm{d}x
    + \dfrac{1}{2}\sum_{k=1}^N m_k \big(|\dot c_k|^2 + \dfrac 1 3
    |\dot R_k|^2\big) 
    - 2 \sum_{k=1}^N \kappa_k \ln(d_\infty
    N R_k)  \leq E_0,
  \end{multline}
  on $(0,T)$ with, denoting by $\ln_+$ the positive part of the $\ln$:
  \begin{equation}
    \label{cor_bound_diss}
    \int_0^T\Bigg[\bigg(\int_{\mathcal F} \mu_f |\p_x u_f|^2 \mathrm{d}x+
    \mu_g \sum_{k=1}^N \dfrac{|\dot R_k|^2}{R_k}\bigg)\Bigg]\mathrm{d}t\leq E_0 + 2  \max_{[0,T]} \sum_{k=1}^N \kappa_k \ln_{+}(d_{\infty}N R_k).
  \end{equation}
\end{Cor}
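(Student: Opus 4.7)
The strategy is to apply Proposition \ref{prop_estim_nrj} with the reference radius chosen as $R_{\mathrm{ref}} = 1/(d_\infty N)$, so that the logarithmic contribution takes the exact form $-2\sum_{k=1}^N \kappa_k \ln(d_\infty N R_k)$ appearing in the target bounds. Integrating the identity \eqref{eq_estim_nrj} between $0$ and $t \in [0,T]$ produces the balance
\[
\mathcal{E}(t) + \int_0^t \mathcal{D}(s)\,\mathrm{d}s = \mathcal{E}(0),
\]
where $\mathcal{E}$ collects the fluid mechanical/internal energy, the bubble kinetic energy and the signed logarithmic term, while the non-negative dissipation $\mathcal{D}$ gathers $\mu_f\int|\partial_x u_f|^2\,\mathrm{d}x$ and $2\mu_g\sum_k |\dot R_k|^2/R_k$.

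The main task is then to bound $\mathcal{E}(0)$ by a constant $E_0$ depending only on the list \eqref{eq_listparametre}. The fluid contribution $\int_{\mathcal F^0}\bigl(\rho_f^0 |u_f^0|^2/2 + \mathrm{q}_f(\rho_f^0)\bigr)\,\mathrm{d}x$ is controlled using the upper bound $\overline{\rho}_\infty$ on $\rho_f^0$ from \eqref{it_bound_rho0} and $\|u_f^0\|_{L^2(\Omega)}\leq \|u_f^0\|_{H^1(\Omega)}$. For the bubble kinetic energy, \eqref{eq_initbubble1}--\eqref{eq_initbubble2} together with the 1D Sobolev embedding give $|\dot c_k^0|, |\dot R_k^0| \leq \|u_f^0\|_{L^\infty(\Omega)} \lesssim \|u_f^0\|_{H^1(\Omega)}$, so assumption \ref{it_bound_mk0} (which yields $m_k \leq 1/(NM_\infty)$) implies $\sum_{k=1}^N m_k\bigl(|\dot c_k^0|^2 + |\dot R_k^0|^2/3\bigr) \lesssim M_\infty^{-1}\|u_f^0\|_{H^1(\Omega)}^2$, independently of $N$. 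For the logarithmic term, \ref{it_bound_mk0} and \ref{it_bound_R0} force $d_\infty N R_k^0 \in [2 d_\infty^2, 1/2]$ and $\kappa_k \leq 1/(NM_\infty)$, so the sum $|\sum_{k=1}^N \kappa_k \ln(d_\infty N R_k^0)|$ is bounded by a constant depending only on $M_\infty$ and $d_\infty$.

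With $E_0 := \mathcal{E}(0)$ under control, the first inequality \eqref{cor_bound_energy} follows immediately from $\mathcal{E}(t) \leq \mathcal{E}(0)$, since $\int_0^t \mathcal{D}\geq 0$. For the dissipation estimate \eqref{cor_bound_diss}, I write $\int_0^T \mathcal{D}\,\mathrm{d}t = \mathcal{E}(0) - \mathcal{E}(T)$, drop the non-negative fluid and bubble kinetic/internal parts of $\mathcal{E}(T)$ on the right-hand side, and observe that the leftover $+2\sum_k \kappa_k \ln(d_\infty N R_k(T))$ is bounded by $2\max_{[0,T]}\sum_k \kappa_k \ln_+(d_\infty N R_k)$ (negative values of the logarithm only reinforce the inequality). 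Keeping only $\mu_f\int|\partial_x u_f|^2\,\mathrm{d}x + \mu_g\sum_k |\dot R_k|^2/R_k$ on the left-hand side (which is $\leq \mathcal{D}$) yields \eqref{cor_bound_diss}.

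The only delicate point is the bookkeeping that ensures the $N$-dependence in $\mathcal{E}(0)$ cancels: every sum over the $N$ bubbles is compensated by the prefactors $m_k, \kappa_k \sim N^{-1}$ provided by \ref{it_bound_mk0}, together with the pointwise $L^\infty$ control of $u_f^0$ via the 1D Sobolev embedding. Beyond this bookkeeping, the corollary is a direct reading of Proposition \ref{prop_estim_nrj}.
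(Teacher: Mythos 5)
Your proof is correct and follows essentially the same route as the paper: choose $R_{\mathrm{ref}} = 1/(d_\infty N)$ in Proposition~\ref{prop_estim_nrj}, integrate, and control $\mathcal E(0)$ term by term using the $N^{-1}$ scaling of $m_k,\kappa_k$ from \ref{it_bound_mk0}, the two-sided bound $d_\infty N R_k^0 \in [2d_\infty^2,1/2]$ from \ref{it_bound_R0}, the bound \eqref{it_bound_rho0} on $\rho_f^0$, and the $1$D Sobolev control of $\|u_f^0\|_{L^\infty}$ via \eqref{eq_initbubble1}--\eqref{eq_initbubble2}. The handling of the dissipation bound (dropping non-negative parts of $\mathcal E(T)$ and replacing the residual $\ln$ by $\ln_+$) matches the paper's argument as well.
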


\begin{proof}
To obtain these inequalities, we integrate \eqref{eq_estim_nrj} with
$R_{ref} = 1/{d_{\infty} N}$ and remark that all the terms on the
left-hand side are positive but:
\[
  \sum_{k=1}^N \kappa_k \ln(d_{\infty} N R_k).
\] 
We obtain then the inequalities \eqref{cor_bound_energy} and
\eqref{cor_bound_diss} with:
\[
  E_0 := \int_{\mathcal F^0} \left( \rho_f^0 \dfrac{|u_f^0|^2}{2} +
    q_f(\rho_f^0)\right) {\rm d}x + \sum_{k=1}^N
  \left(\dfrac{|\dot{c}_k^0|^2}{2} + \dfrac{|\dot{R}_k^0|^2}{6} \right)
  - 2 \sum_{k=1}^N \kappa_k \ln (R_k^0 d_{\infty}N).
\]
The first term in $E_0$ is clearly controlled by $\|u_f^0\|_{L^2}$ and
$\bar{\rho}_{\infty}.$
As for the second term, the velocity continuity
\eqref{eq_initbubble1}-\eqref{eq_initbubble2} gives
\begin{equation*}
  \label{eq_IC-ck0}
  |\dot c_k^{0}|+  |\dot R_k^{0}| \leq 2\|
  u_f^0\|_{L^\infty(\Omega)}, \quad \forall k=1,\ldots,N .
\end{equation*}
Then, with \ref{it_bound_mk0}, we obtain:
\begin{equation*}
  \label{eq_IC-ck1}
\sum_{k=1}^N m_k \left( |\dot c_k^0|^2 + \dfrac 1 3
    |\dot R_k^0|^2\right)\leq \dfrac{4}{M_{\infty}}\|u_f^0\|^2_{L^\infty(\Omega)},
\end{equation*}
and, with a classical Sobolev embedding, this part is again controlled
by $M_{\infty}$ and $\|u_f^0\|_{H^1_0(\Omega)}.$
Now using the bound \ref{it_bound_R0} on the initial radii, it holds
\begin{equation*}
  \label{eq_IC-Rk1}
  2d_\infty^2\leq R_k^0 N d_\infty \leq \dfrac 1 2,
\end{equation*}
so that
\begin{equation*}
  \label{eq_IC-Rk2}
  - \sum_{k=1}^N \kappa_k \ln(d_\infty NR_k^0)\leq \dfrac{|\ln(2d_{\infty}^2)|}{M_{\infty}}.
\end{equation*}
This concludes the proof.
\end{proof}

We proceed with a second classical regularity estimate: 
\begin{Prop}
  \label{prop_semi_norm_dissipation}
  The following identity holds
  \begin{equation}
    \label{eq_semi_norm_dissipation}
    \begin{aligned}
      \dfrac{\mathrm{d}}{\mathrm{d}t} &\left[\int_{\mathcal F}
        \left(\mu_f \dfrac{|\p_x u_f|^2}{2}-\mathrm{p}_f(\rho_f) \p_x
          u_f\right) \mathrm{d}x + \sum_{k=1}^N \left( \mu_g
          \dfrac{|\dot R_k|^2}{R_k} - {2} \kappa_k \dfrac{\dot
            R_k}{R_k}\right)
      \right]\\
      &+ \int_{\mathcal F} \rho_f|\p_t u_f + u_f \p_x u_f |^2
      \mathrm{d}x+ \sum_{k=1}^N m_k \left(|\ddot c_k|^2 + \dfrac 1 3
        |\ddot R_k|^2
      \right)\\
      & = \int_{\mathcal F} \left( \mathrm{p}_f'(\rho_f)\rho_f |\p_x u_f|^2 -
        \mu_f
        \dfrac{(\p_x u_f)^3}{2}\right) \mathrm{d}x \\
      &+ \sum_{k=1}^N \left({2} \kappa_k \dfrac{|\dot R_k|^2}{R_k^2} -
        \mu_g \dfrac{|\dot R_k|^3}{R_k^2}\right).
    \end{aligned}
  \end{equation}
\end{Prop}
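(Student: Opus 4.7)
My plan is to test the momentum equation \eqref{eq_fluid_NS_1D} with the material derivative $Du_f := \partial_t u_f + u_f \partial_x u_f$, following the same spirit as Proposition \ref{prop_estim_nrj} but with $Du_f$ in place of $u_f$. Multiplying \eqref{eq_fluid_NS_1D} by $Du_f$ and integrating over each fluid subinterval $\mathcal{F}_k$ of \eqref{eq_Fk} will produce on the left the dissipation $\sum_k \int_{\mathcal{F}_k} \rho_f |Du_f|^2\,dx$, while integration by parts of $\int \partial_x \Sigma_f\,Du_f\,dx$ will yield a bulk contribution $-\int_{\mathcal{F}} \Sigma_f\,\partial_x Du_f\,dx$ plus boundary contributions at the interfaces $x_k^{\pm}$. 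The outer contributions at $\pm 1$ will vanish because \eqref{eq_BC_uf} forces $Du_f(\pm 1)=0$ (since $\partial_t u_f(\pm 1)=0$ as well).

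I will then handle the bulk by combining the commutator identity $\partial_x Du_f = D(\partial_x u_f) + (\partial_x u_f)^2$ with the Reynolds transport formula on $\mathcal{F}$, namely $\tfrac{d}{dt}\int_{\mathcal{F}} \phi\,dx = \int_{\mathcal{F}} (D\phi + \phi\,\partial_x u_f)\,dx$, whose validity on $\mathcal{F}$ relies on \eqref{eq_continuity_velocity} ensuring that each endpoint of $\mathcal{F}_k$ moves with velocity $u_f$. Applied to $\phi = \mu_f |\partial_x u_f|^2/2$, this converts $-\mu_f \int \partial_x u_f\,\partial_x Du_f\,dx$ into $-\tfrac{d}{dt}\int_{\mathcal{F}} \mu_f |\partial_x u_f|^2/2\,dx - (\mu_f/2)\int_{\mathcal{F}} (\partial_x u_f)^3\,dx$. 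For the pressure part, applying the same rearrangement to $-p_f(\rho_f)\partial_x u_f$ and using the mass equation \eqref{eq_fluid_mass} in the form $Dp_f(\rho_f) = -p_f'(\rho_f)\rho_f\,\partial_x u_f$ converts $\int_{\mathcal{F}} p_f(\rho_f)\,\partial_x Du_f\,dx$ into $\tfrac{d}{dt}\int_{\mathcal{F}} p_f(\rho_f)\partial_x u_f\,dx + \int_{\mathcal{F}} p_f'(\rho_f)\rho_f |\partial_x u_f|^2\,dx$, the two spurious $p_f(\rho_f)(\partial_x u_f)^2$ contributions cancelling. Transferring the two $d/dt$ terms to the left will recover the fluid bracket of \eqref{eq_semi_norm_dissipation} together with its first two source terms.

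For the interface contributions, I will exploit that $u_f(t,x_k^{\pm}(t)) = \dot c_k \pm \dot R_k$ by \eqref{eq_continuity_velocity} and that $x_k^{\pm}$ is transported by $u_f$, so that $Du_f(x_k^{\pm}) = \ddot c_k \pm \ddot R_k$. Grouping the four boundary terms attached to bubble $k$ yields
\[
-(\Sigma_f(x_k^+)-\Sigma_f(x_k^-))\,\ddot c_k - (\Sigma_f(x_k^+)+\Sigma_f(x_k^-))\,\ddot R_k,
\]
which by \eqref{eq_droplet_newton_1D_C}--\eqref{eq_droplet_newton_1D_R} equals $-m_k|\ddot c_k|^2 - (m_k/3)|\ddot R_k|^2 - 2\Sigma_k \ddot R_k$; the first two terms go to the left as the bubble dissipation. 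To handle the residual $-2\Sigma_k \ddot R_k$ I will insert \eqref{eq_droplet_tensor_sigmak} together with \eqref{eq_droplet_pk} to write $\Sigma_k = \mu_g \dot R_k/R_k - \kappa_k/R_k$, and time-differentiate the combinations $\mu_g |\dot R_k|^2/R_k$ and $2\kappa_k \dot R_k/R_k$, obtaining
\[
-2\Sigma_k \ddot R_k = -\frac{d}{dt}\Bigl(\mu_g\frac{|\dot R_k|^2}{R_k} - 2\kappa_k\frac{\dot R_k}{R_k}\Bigr) - \mu_g\frac{|\dot R_k|^3}{R_k^2} + 2\kappa_k\frac{|\dot R_k|^2}{R_k^2}.
\]
Moving the $d/dt$ contribution to the left produces the gas bracket of \eqref{eq_semi_norm_dissipation} while the other two terms form the bubble part of its right-hand side.

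The main technical obstacle I anticipate is the bookkeeping of the moving-interface corrections: every occurrence of $\partial_x Du_f$ in the bulk generates, through the commutator and the Reynolds transport of a squared quantity, both a $d/dt$ piece and a cubic source, and I need to verify that the two independent sources of $(\partial_x u_f)^3$ combine exactly into $-(\mu_f/2)(\partial_x u_f)^3$ and that all redundant $p_f(\rho_f)(\partial_x u_f)^2$ terms cancel. On the bubble side, matching \eqref{eq_droplet_newton_1D_C}--\eqref{eq_droplet_newton_1D_R} with the time derivative of $\Sigma_k$ requires the specific form of $\pi_k$ from \eqref{eq_droplet_pk} so that both the $d/dt$ gas bracket and the correct quadratic and cubic bubble sources emerge. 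Once this algebra is organized, identity \eqref{eq_semi_norm_dissipation} follows by summation over $k$.
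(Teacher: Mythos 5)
Your proposal is correct and follows essentially the same route as the paper's proof: multiply the momentum equation by the material derivative $\p_t u_f + u_f\p_x u_f$, integrate by parts to separate a bulk term from interface contributions, handle the bulk by identifying the time-derivatives of $\mu_f|\p_x u_f|^2/2$ and $-\mathrm{p}_f(\rho_f)\p_x u_f$ modulo cubic sources, and convert the interface terms via \eqref{eq_continuity_velocity} and the bubble Newton laws into the gas bracket plus its quadratic/cubic sources. Your explicit use of the commutator identity $\p_x(Du_f)=D(\p_x u_f)+(\p_x u_f)^2$ and the Reynolds transport formula simply makes precise what the paper calls ``classical manipulations,'' and the cancellation of the spurious $\mathrm{p}_f(\rho_f)(\p_x u_f)^2$ terms and the emergence of $-\tfrac{\mu_f}{2}(\p_x u_f)^3$ both check out.
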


\begin{proof}
  Multiplying the momentum equation \eqref{eq_fluid_NS_1D} by
  $\p_t u_f + u_f \p_x u_f$ and integrating over the fluid domain
  $\mathcal F$ yield
  \begin{equation}
    \label{eq_semi_norm_dissipation2}
    \begin{aligned}
      \int_{\mathcal F} \rho_f|\p_t u_f + u_f \p_x u_f|^2 \mathrm{d}x
      &= \int_{\mathcal F} (\p_t u_f + u_f \p_x u_f) \p_x \Sigma_f
      \mathrm{d}x \\
      &= T_4-T_5,
    \end{aligned}
  \end{equation}
  with
  \begin{align*}
    T_4 &= \sum_{k=0}^N \Sigma_f(x_{k+1}^-)(\p_t u_f + u_f \p_x
          u_f)(x_{k+1}^-) - \Sigma_f(x_{k}^+)(\p_t u_f + u_f \p_x
          u_f)(x_{k}^+),\\
    T_5 &=\int_{\mathcal F} \Sigma_f\p_x (\p_t u_f + u_f
          \p_x u_f) \textrm{d}x .
  \end{align*}
   The boundary term $T_4$ can be simplified by using the interface
  conditions \eqref{eq_continuity_velocity},
  \begin{equation*}
    \dfrac{\textrm{d}}{\textrm{d}t} (\dot c_k \pm \dot R_k) =
    \dfrac{\textrm{d}}{\textrm{d}t} \left( u_f(x_k^\pm)\right) =
    \left( \p_t u_f + u_f \p_x u_f\right)(x_k^\pm).
  \end{equation*}
  Then one obtains
    \begin{equation*}
    T_4 = \sum_{k=0}^N \Sigma_f(x_{k+1}^-) (\ddot c_{k+1} -  \ddot R_{k+1}) -
    \Sigma_f(x_{k}^+) (\ddot c_k + \ddot R_k).
  \end{equation*}
  The boundary conditions \eqref{eq_BC_uf} allow to reorganize the
  sum, and using the droplet equations of motion
  \eqref{eq_droplet_newton_1D_C}--\eqref{eq_droplet_newton_1D_R} and
  the droplet pressure law \eqref{eq_droplet_pk}, we have successively
  \begin{align*}
    T_4 &=- \left\{ \sum_{k=1}^N \Sigma_f(x_{k}^+) (\ddot c_k + \ddot R_k) -
          \Sigma_f(x_{k}^-) (\ddot c_k - \ddot R_k)\right\}\\
        & = - \sum_{k=1}^N \ddot c_k(\Sigma_f(x_k^+) - \Sigma_f(x_k^-))
          + \ddot R_k (\Sigma_f(x_k^+) +\Sigma_f(x_k^-)) \\
        &= - \sum_{k=1}^Nm_k\left( |\ddot c_k|^2 + \dfrac 1 3 |\ddot
          R_k|^2\right)+ {2} \ddot R_k \left( \mu_g \dfrac{\dot R_k}{R_k}-
          \dfrac{\kappa_k}{R_k}\right) \\
        &= - \sum_{k=1}^N\left\{m_k \left( |\ddot c_k|^2 + \dfrac 1 3 |\ddot
          R_k|^2\right)+\dfrac{\mathrm{d}}{\mathrm{d}t} \left[ \mu_g
          \dfrac{|\dot R_k|^2}{R_k} - {2} \kappa_k \dfrac{\dot
          R_k}{R_k}\right]\right\}\\
    &\quad+ \sum_{k=1}^N \left( {2} \kappa_k \dfrac{|\dot R_k|^2}{R_k^2}-
      \mu_g   \dfrac{(\dot R_k)^3}{R_k^2}\right).
  \end{align*}
  We now turn to the volumic term $T_5$.  Developing the term $T_5$
  gives
  \begin{equation}
    T_5 = T_6 + \int_{\mathcal F}\mu_f (\p_x u_f)^3 \mathrm{d}x -T_7
    -\int_{\mathcal F} \mathrm{p}_f(\rho_f) |\p_x u_f|^2 \mathrm{d}x,
  \end{equation}
  with
  \begin{align*}
    T_6 &= \int_{\mathcal F}\mu_f \p_x u_f \left( \p_t (\p_x u_f) +
          u_f \p_x(\p_x u_f)\right) \textrm{d}x,\\
    T_7 &=\int_{\mathcal F}\mathrm{p}_f(\rho_f) \left(\p_t (\p_x u_f)
          + u_f \p_x(\p_x u_f) \right) \textrm{d}x .
  \end{align*}
  These two terms can be handled by classical manipulations, providing
  \begin{align*}
    \dfrac{\textrm{d}}{\textrm{d}t} \left[\int_{\mathcal F}\mu_f \dfrac{|\p_x
    u_f|^2}{2} \textrm{d}x\right] &= T_6 + \int_{\mathcal F}\mu_f \dfrac{( \p_x
                                    u_f)^3}{2}, \\
    \dfrac{\textrm{d}}{\textrm{d}t} \left[\int_{\mathcal
    F}\mathrm{p}_f(\rho_f)\p_x u_f \textrm{d}x\right] 
                                  & = T_7- \int_{\mathcal F}
                                    (\mathrm{p}_f(\rho_f)-\mathrm{p}_f'(\rho_f)\rho_f)|\p_x
                                    u_f|^2 \textrm{d}x. 
  \end{align*}
  As a result, 
    \begin{equation*}
      \begin{aligned}
        T_5 &= \dfrac{\textrm{d}}{\textrm{d}t} \left[ \int_{\mathcal F}
          \mu_f \dfrac{|\p_x u_f|^2}{2} - \mathrm{p}_f(\rho_f) \p_x
          u_f \textrm{d}x\right] \\
        &\quad + \mu_f \int_{\mathcal F}
        \dfrac{(\p_x u_f)^3}{2}\textrm{d}x + \int_{\mathcal F}
        p'(\rho_f) \rho_f |\p_x u_f|^2 \textrm{d}x.
      \end{aligned}
  \end{equation*}
    Finally plugging the expressions of $T_4$ and $T_5$ into
  \eqref{eq_semi_norm_dissipation2}
  gives the expected result.
\end{proof}

In the regime of initial data specified in this section, we obtain the
following corollary:
\begin{Cor} \label{cor_est_H1}
If initial data are constructed as in
\eqref{eq_initbubble}-\eqref{eq_initbubble2} and satisfy
\ref{it_bound_mk0}-\ref{it_bound_R0}-\ref{it_bound_F0}, there exists a
constant $E_1$ depending only on the list of parameters
\eqref{eq_listparametre} such that any classical solution to
\eqref{eq_fluid_mass}--\eqref{eq_droplet_tensor_sigmak} on some
time-interval $[0,T]$ satisfies:
 \begin{equation}
   \label{eq_bound_H1_1}
   \begin{aligned}
   &  \sup\limits_{[0,T]} \left( \int_{\mathcal F} \right.\left.\mu_f \dfrac{|\p_x
         u_f|^2}{2} \mathrm{d}x+ \mu_g\sum_{k=1}^N \dfrac{|\dot
         R_k|^2}{R_k}\right)\\
     &+ \int_0^T \left(\int_{\mathcal F}\rho_f |\p_t u_f + u_f \p_x
       u_f|^2 \mathrm{d}x
     + \sum_{k=1}^N m_k (|\ddot c_k|^2 + |\ddot
     R_k|^2)\right)\\
   &\leq  \sup\limits_{[0,T]} \left[\left( 2\sum_{k=1}^N \kappa_k \dfrac{|\dot R_k|}{R_k}\right)+
   \int_{\mathcal F} \mathrm{p}_f(\rho_f) |\p_x u_f| \mathrm{d}x\right] + \int_0^T \int_{\mathcal F} \mu_f \dfrac{|\p_x u_f|^3}{2}\mathrm{d}x\\
   &+    \int_0^T \sum_{k=1}^N \left( 2\kappa_k \dfrac{|\dot R_k|^2}{R_k^2} + \mu_g \dfrac{|\dot
       R_k|^3}{ R_k^2}\right) + E_1.
 \end{aligned}
\end{equation}
\end{Cor}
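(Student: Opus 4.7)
The plan is to integrate the identity \eqref{eq_semi_norm_dissipation} from Proposition \ref{prop_semi_norm_dissipation} over $[0,T]$ and rearrange the resulting equality into the desired inequality. After integration, the bracketed term evaluated at time $T$ splits into two pieces with definite sign ($\mu_f|\p_x u_f|^2/2$ and $\mu_g|\dot R_k|^2/R_k$) and two pieces with no sign ($-\mathrm{p}_f(\rho_f)\p_x u_f$ and $-2\kappa_k\dot R_k/R_k$). I take a supremum over $[0,T]$: the signed quantities contribute to the left-hand side, while the unsigned quantities are moved to the right-hand side through absolute values, yielding precisely the two terms inside the first $\sup$ in \eqref{eq_bound_H1_1}. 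The two volume terms $\int_{\mathcal F}\rho_f|\p_t u_f + u_f\p_x u_f|^2\,dx$ and $\sum_k m_k(|\ddot c_k|^2 + |\ddot R_k|^2/3)$ integrate in time to the left-hand side (the factor $1/3$ is dropped by a pointwise lower bound), while the cubic right-hand side of \eqref{eq_semi_norm_dissipation} produces the $|\p_x u_f|^3$ and $|\dot R_k|^3/R_k^2$ contributions by taking absolute values.

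\textbf{Estimating $E_1$.} The remaining task is to check that the initial-time bracket
\[
\int_{\mathcal F^0}\Bigl(\mu_f\tfrac{|\p_x u_f^0|^2}{2} - \mathrm{p}_f(\rho_f^0)\p_x u_f^0\Bigr)dx + \sum_{k=1}^N\Bigl(\mu_g\tfrac{|\dot R_k^0|^2}{R_k^0} - 2\kappa_k\tfrac{\dot R_k^0}{R_k^0}\Bigr)
\]
is bounded by a constant depending only on the list \eqref{eq_listparametre}, independently of $N$. The fluid piece is immediate from $u_f^0\in H^1_0(\Omega)$, $\rho_f^0\leq\bar\rho_\infty$ and Cauchy--Schwarz. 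The naive bound $|\dot R_k^0|\le\|u_f^0\|_{L^\infty}$ combined with $R_k^0\gtrsim N^{-1}$ would produce a sum of size $N$, so instead I use \eqref{eq_initbubble2} together with Cauchy--Schwarz inside each bubble,
\[
|\dot R_k^0|^2 = \frac14\Bigl|\int_{c_k^0-R_k^0}^{c_k^0+R_k^0}\p_x u_f^0\,dx\Bigr|^2 \le \frac{R_k^0}{2}\int_{B_k^0}|\p_x u_f^0|^2\,dx,
\]
which gives $\sum_k|\dot R_k^0|^2/R_k^0\le\tfrac12\|u_f^0\|_{H^1(\Omega)}^2$ with no $N$-dependence. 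For the mixed term $\sum_k\kappa_k|\dot R_k^0|/R_k^0$, Cauchy--Schwarz in the $k$-sum with weight $\kappa_k$ factorises it as $(\sum_k\kappa_k/R_k^0)^{1/2}(\sum_k\kappa_k|\dot R_k^0|^2/R_k^0)^{1/2}$; the first factor is controlled by $1/(2d_\infty M_\infty^2)$ via \ref{it_bound_mk0}--\ref{it_bound_R0}, and the second inherits $N$-independence from the preceding bound combined with $\kappa_k\le (NM_\infty)^{-1}$.

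\textbf{Main obstacle.} The genuine difficulty is the $N$-uniform control of the bubble contributions to $E_1$. The Cauchy--Schwarz-on-each-bubble trick above is the decisive mechanism: it transfers $N$ copies of pointwise velocity bounds into a single $H^1$-norm of $u_f^0$, exploiting the smallness of bubble length encoded in the compatibility identities \eqref{eq_initbubble1}--\eqref{eq_initbubble2}. A secondary concern is the fate of the term $\mathrm{p}_f'(\rho_f)\rho_f|\p_x u_f|^2$ that appears on the right-hand side of \eqref{eq_semi_norm_dissipation} but is not visible in \eqref{eq_bound_H1_1}: since no $L^\infty$ bound on $\rho_f$ is yet available, it must either be absorbed into the $|\p_x u_f|^3$ term via Young's inequality, or retained implicitly and closed off later in the continuation/bootstrap argument outlined at the end of the section.
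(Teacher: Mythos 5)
Your plan follows the paper's route closely: integrate the identity of Proposition \ref{prop_semi_norm_dissipation}, keep the signed pieces of the bracket on the left, move the unsigned ones across as absolute values, and bound the initial-time bracket uniformly in $N$ via Cauchy--Schwarz inside each bubble. The key step $|\dot R_k^0|^2\le\tfrac{R_k^0}{2}\int_{B_k^0}|\p_x u_f^0|^2\,\mathrm dx$ and the resulting $\sum_k|\dot R_k^0|^2/R_k^0\le\tfrac12\|u_f^0\|_{H^1}^2$ are exactly what the paper uses. Two things do not survive scrutiny, however.

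The genuine gap is your treatment of $\mathrm p_f'(\rho_f)\rho_f|\p_x u_f|^2$. As you correctly observe, this quantity sits on the right-hand side of \eqref{eq_semi_norm_dissipation} with a positive sign and is absent from the right-hand side of \eqref{eq_bound_H1_1}; after integration and absolute-value bounds it should contribute a term $\int_0^T\int_{\mathcal F}\kappa_f\gamma_f\rho_f^{\gamma_f}|\p_x u_f|^2\,\mathrm dx\,\mathrm dt$ that is not visible in the target inequality. Neither of your two suggested escapes works. Young's inequality turns it into $\varepsilon|\p_x u_f|^3+C_\varepsilon\rho_f^{3\gamma_f}$, and the piece $\rho_f^{3\gamma_f}$ has no control at this stage (no pointwise density bound is assumed in the corollary), so it cannot be absorbed into $\int\mu_f|\p_x u_f|^3/2$. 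And ``retained implicitly and closed off later'' is not an argument for establishing \eqref{eq_bound_H1_1} as stated: the corollary makes no reference to the continuation hypotheses \ref{it_bound_rho}--\ref{it_H1_bound_tensors} and is supposed to hold for any classical solution with well-prepared data. The paper's own proof writes this term on the \emph{left} of the integrated inequality and then drops it as positive; that only makes sense if after integration the term lands on the left with a $+$, which is not what a literal move of the right-hand term of \eqref{eq_semi_norm_dissipation} to the left produces (it would carry a $-$). Whichever side carries the sign slip, the upshot for your argument is the same: you cannot land exactly on \eqref{eq_bound_H1_1} without either identifying a positive left-hand term to drop or adding $\int_0^T\int_{\mathcal F}\mathrm p_f'\rho_f|\p_x u_f|^2$ to the right-hand side (and only then closing it off in the appendix using \ref{it_bound_rho} and the dissipation bound \eqref{eq_Q6}). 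Your proposal leaves this open.

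A secondary inaccuracy concerns the mixed initial term $\sum_k\kappa_k|\dot R_k^0|/R_k^0$. Your Cauchy--Schwarz factorisation $\bigl(\sum_k\kappa_k/R_k^0\bigr)^{1/2}\bigl(\sum_k\kappa_k|\dot R_k^0|^2/R_k^0\bigr)^{1/2}$ is fine, but the claim that the first factor is bounded by a constant is false: each $\kappa_k/R_k^0$ is $O(1)$ under \ref{it_bound_mk0}--\ref{it_bound_R0}, so $\sum_k\kappa_k/R_k^0=O(N)$ and the first factor is $O(\sqrt N)$. The product is still $N$-uniform only because the second factor is $O(1/\sqrt N)$ (since $\kappa_k\le(NM_\infty)^{-1}$ and $\sum_k|\dot R_k^0|^2/R_k^0$ is $O(1)$). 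The paper instead writes $\kappa_k/\sqrt{R_k^0}\lesssim N^{-1/2}$ and then applies Cauchy--Schwarz in $k$, which makes the cancellation transparent; your version reaches the same conclusion but the intermediate per-factor bounds you state are wrong.
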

\begin{proof}
Integrating identity
 \eqref{eq_semi_norm_dissipation}  given in Proposition
 \ref{prop_semi_norm_dissipation} between $0$ and $t \leq T$,
 rejecting all non-signed term on the right-hand side that we bound
 then by putting absolute values,  it yields:
 \begin{equation*}
   \begin{aligned}
     \left( \int_{\mathcal F} \right.&\left.\mu_f \dfrac{|\p_x
         u_f|^2}{2} \mathrm{d}x+ \mu_g\sum_{k=1}^N \dfrac{|\dot
         R_k|^2}{R_k}\right)\\
     &+ \int_0^t \left(\int_{\mathcal F}\rho_f |\p_t u_f + u_f \p_x
       u_f|^2 \mathrm{d}x
       + \sum_{k=1}^N m_k (|\ddot c_k|^2 + |\ddot
       R_k|^2)\right)\\
     &+   \int_0^T \int_{\mathcal F} \kappa_f \gamma_f \rho_f^{\gamma_f}
     |\p_x u_f|^2 \mathrm{d}x\\
     &\leq   \left[\left( 2\sum_{k=1}^N \kappa_k \dfrac{|\dot R_k|}{R_k}\right)+
       \int_{\mathcal F} \mathrm{p}_f(\rho_f) |\p_x u_f|
       \mathrm{d}x\right] + \int_0^t \int_{\mathcal F} \mu_f
     \dfrac{|\p_x u_f|^3}{2}\mathrm{d}x\\
     &+    \int_0^t \sum_{k=1}^N \left(2 \kappa_k \dfrac{|\dot R_k|^2}{R_k^2} + \mu_g \dfrac{|\dot
         R_k|^3}{ R_k^2}\right) +\int_{\mathcal F^0} \mu_f \dfrac{|\p_x
         u_f^0|^2}{2} \mathrm{d}x+ \sum_{k=1}^N   \left(  \mu_g \dfrac{|\dot
         R_k^0|^2}{R_k^0} {\color{red}+ 2\kappa_k \dfrac{|\dot R^0_k|}{R^0_k}} \right).
   \end{aligned}
 \end{equation*}
 
 To obtain the expected result, it remains to drop the last term in the
 left-hand side which is positive and to bound the last
 term on the right-hand side by a constant $E_1$ with the expected
 dependencies.
 For this, we note that the first integral
 in this last term clearly depends on $\|u_f^0\|_{H^1_0(\Omega)}.$
 Concerning the first term in the sum, the
 continuity of the velocity field \eqref{eq_initbubble2}
 rewrites for any $k:$
 \[
   \dot R_k^{0}=\dfrac 1 2 \displaystyle \int_{B_k^0} \p_x u_f^{0}(s) \textrm{d}s,
 \] 
 so that 
 \begin{equation*}
   \label{eq_IC-dotRk}
   |\dot R_k^{0}|\leq \dfrac 1 2 \sqrt{R_k^{0}} \left( \int_{B_k^0}
     |\p_x u_f^{0}(s)|^2 \textrm{d}s\right)^{1/2}.
 \end{equation*}
 As a consequence it holds
 \begin{equation*}
   \label{eq_IC-dotRk2}
   \sum_{k=1}^N \dfrac{|\dot R_k^{0}|^2}{R_k^{0}}\leq \dfrac 1 2 \int_{\cup B_k^0}
   |\p_x u_f^{0}(s)|^2 \textrm{d}s\leq \|u_f^0\|_{H^1(\Omega)}^2.
 \end{equation*}
 {
 Finally, the last term in the sum is bounded by using that $\kappa_k$ scales like $1/N.$ Indeed, applying \ref{it_bound_mk0} with \ref{it_bound_R0} we have:
\[ \dfrac{\kappa_k}{\sqrt{R_k^0}} \leq \dfrac{M_{\infty}}{\sqrt{2d_{\infty}}} \dfrac{1}{\sqrt{N}} \quad \forall \, k=1,\ldots,N,
\]
and then, with the above bound on $|\dot{R}_k^0|/\sqrt{R_k^0},$ we obtain: 
 \[
 \sum_{k=1}^N \kappa_k \dfrac{|\dot{R}_k^0|}{R_k^{0}} \leq \dfrac{M_{\infty}}{\sqrt{8d_{\infty}}} \left( \int_{\cup B_k^0}|\partial_x u_f^0|^2\right)^{\frac 12}.
 \]
}
 This ends the proof.
\end{proof}

\subsection{Extended stress-tensor estimates}
\label{sec_stress-estimates}

In order to obtain regularity estimates on the fluid velocity field, a classical
way is to use the stress tensor. However $\Sigma_f$ is only defined
on the fluid domain $\mathcal{F}$, so that estimates on this stress
tensor depend on the geometric properties of $\mathcal F$, in
particular the number of bubbles.
In order to remove this
dependency, we define new stress tensors for the fluid and for
the gas phase, extended to the full domain $\Omega$:
\begin{equation}
  \label{eq_tSigmaf}
  \tSigma_f =
  \begin{cases}
    \Sigma_f , & \text{in } \mathcal F,\\
    \dfrac{\Sigma_f(x_k^-)+\Sigma_f(x_k^+)}{2} -
    \dfrac{\Sigma_f(x_k^-)-\Sigma_f(x_k^+)}{2R_k}(x-c_k), & \text{in }
    B_k, \;  k=1,\dots,N,
  \end{cases}
\end{equation}
and
\begin{equation}
  \label{eq_tSigmag}
  \tSigma_g =
  \begin{cases}
    \Sigma_k, & \text{in } B_k, \; k=1,\dots, N,\\
    \Sigma_N, & \text{in }\mathcal F_N,\\
    \Sigma_0, & \text{in }\mathcal F_0,\\
    \Sigma_k + \dfrac{\Sigma_{k+1}-
      \Sigma_k}{x_{k+1}^--x_k^+}(x-x_k^+), & \text{in } \mathcal
    F_k, \; k=1,\dots, N-1.
  \end{cases}
\end{equation}
Observe that these two stress tensors are continuous at each interface
$x_k^\pm$. We analyze here the properties of these extensions, when
$\Sigma_f$ obeys further the continuity
properties adapted from
\eqref{eq_droplet_newton_1D_C}-\eqref{eq_droplet_newton_1D_R}-\eqref{eq_droplet_tensor_sigmak}.
Namely:
\begin{align}
  \label{eq_droplet_newton_1D_C_bis}
  &m_k \ddot c_k = \Sigma_f(x_k^+) - \Sigma_f(x_k^-) , \\
  \label{eq_droplet_newton_1D_R_bis}
  &\displaystyle \frac{m_k}{3} \ddot R_k = \Sigma_f(x_k^-) +
  \Sigma_f(x_k^+)  - 2 \Sigma_k , \\
   \label{eq_droplet_tensor_sigmak_bis}
   &\displaystyle \Sigma_k = \mu_g \frac{\dot R_k}{R_k} - {\rm
     p}_g(\rho_k) - F_s/2,
 \end{align}
In the stationary analysis of this subsection, these
latter identities may stand for definitions
of $\ddot{c}_k$ and $\ddot{R}_k.$ These quantities will be related to
the dynamical problem afterwards.

\begin{Prop}
  \label{prop_estima_sigmaTf}
 Assume that $\Sigma_f \in H^1(\mathcal F)$ satisfies \eqref{eq_droplet_newton_1D_C_bis}-\eqref{eq_droplet_newton_1D_R_bis}
  with $\Sigma_k$ defined by \eqref{eq_droplet_tensor_sigmak_bis}.
  Then $\tSigma_f\in H^1(\Omega)$ and there exists a constant $C_0>0$ such
  that
  \begin{equation}
    \label{eq_stress_fluid2_H1}
    \begin{aligned}
      \|\tSigma_f\|_{H^1(\Omega)} \leq C_0 &\left[
        \|\Sigma_f\|^2_{H^1(\mathcal F)}  + \sum_{k=1}^N (m_k)^2
        \left( |\ddot{R}_{k}|^2+
         \dfrac{|\ddot{c}_k|^2}{R_k}\right)\right.\\
      & \left.+ \sum_{k=1}^N \left( \mu_g^2
          \dfrac{|\dot{R}_k|^2}{R_k} +
          \dfrac{\kappa_k^2}{R_k}\right) \right]^{\frac 12}.
    \end{aligned}
\end{equation}
\end{Prop}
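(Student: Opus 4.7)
The first task is to verify that $\tSigma_f$ belongs to $H^1(\Omega)$ as a whole, not merely piecewise. Continuity at each interface $x_k^\pm$ follows by direct substitution in \eqref{eq_tSigmaf}: by construction, the affine interpolation on $B_k$ equals $\Sigma_f(x_k^-)$ at $x_k^-$ and $\Sigma_f(x_k^+)$ at $x_k^+$. Combined with $\Sigma_f \in H^1(\mathcal F)$, this places $\tSigma_f$ in $H^1(\Omega)$, so it suffices to bound $\|\tSigma_f\|_{H^1(B_k)}$ for each $k$ and sum with the intrinsic contribution $\|\Sigma_f\|^2_{H^1(\mathcal F)}$.

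The second step exploits the affine form on each $B_k$. Its derivative is constant and, using \eqref{eq_droplet_newton_1D_C_bis},
\begin{equation*}
\partial_x \tSigma_f\big|_{B_k} = \frac{\Sigma_f(x_k^+) - \Sigma_f(x_k^-)}{2R_k} = \frac{m_k \ddot c_k}{2 R_k},
\end{equation*}
so $\|\partial_x \tSigma_f\|^2_{L^2(B_k)} = m_k^2 |\ddot c_k|^2/(2R_k)$, which already accounts for the $m_k^2 |\ddot c_k|^2/R_k$ contribution in the target bound. A direct integration of the square of the affine interpolant yields $\|\tSigma_f\|^2_{L^2(B_k)} \leq R_k \bigl( |\Sigma_f(x_k^-)|^2 + |\Sigma_f(x_k^+)|^2 \bigr)$.

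The third step controls the endpoint values $\Sigma_f(x_k^\pm)$ via the bubble balance equations. Solving the $2\times 2$ linear system \eqref{eq_droplet_newton_1D_C_bis}--\eqref{eq_droplet_newton_1D_R_bis} for $\Sigma_f(x_k^\pm)$ expresses them as $\tfrac12 \bigl( \tfrac{m_k}{3}\ddot R_k + 2 \Sigma_k \pm m_k \ddot c_k \bigr)$; then \eqref{eq_droplet_tensor_sigmak_bis} together with the reformulation in \eqref{eq_droplet_pk} gives $|\Sigma_k|^2 \lesssim (\mu_g^2 |\dot R_k|^2 + \kappa_k^2)/R_k^2$. Plugging these into the $L^2$ estimate and using $R_k \leq 1$ to absorb the $R_k m_k^2 ( |\ddot c_k|^2 + |\ddot R_k|^2 )$ factors into the claimed $m_k^2 (|\ddot R_k|^2 + |\ddot c_k|^2/R_k)$ control (the $R_k$ from the interval length cancelling one $1/R_k$ factor coming from $\Sigma_k^2$), and finally summing over $k$, produces the stated inequality.

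No serious obstacle is anticipated: the proof is a direct algebraic bookkeeping, and the constant $C_0$ produced this way is universal since only $R_k \leq 1$ is invoked. The one point requiring care is tracking the powers of $R_k$ on each term, because the $H^1$ regularity gain on the bubbles lives entirely in the derivative contribution (which brings the factor $1/R_k$ multiplying $m_k^2 |\ddot c_k|^2$), whereas the $L^2$ contribution carries a compensating $R_k$ from the measure of $B_k$ that must be balanced against the $1/R_k^2$ appearing in the surface-tension and $\dot R_k$ terms inherited from $\Sigma_k$.
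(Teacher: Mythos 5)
Your proof is correct and follows essentially the same path as the paper's: decompose $\|\tSigma_f\|_{H^1}$ over $\mathcal F$ and the bubbles using continuity at the interfaces, express the $H^1(B_k)$ contribution of the affine interpolant in terms of the endpoint values, substitute the bubble balance laws \eqref{eq_droplet_newton_1D_C_bis}--\eqref{eq_droplet_tensor_sigmak_bis} (and $\Sigma_k = \mu_g\dot R_k/R_k - \kappa_k/R_k$), and absorb residual powers of $R_k$ via $R_k \le 1$. The paper works directly with the sum and difference $\Sigma_f(x_k^+)\pm\Sigma_f(x_k^-)$ rather than solving for the individual endpoints as you do, but the two bookkeeping routes are algebraically equivalent.
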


\begin{proof}
  By continuity of $\tSigma_f$ at the interfaces,
    \begin{equation*}
      \|\tilde{\Sigma}_f\|_{H^1(\Omega)}^2 =
      \|{\Sigma}_f\|_{H^1(\mathcal F)}^2 + \sum_{k=1}^N \|\tilde{\Sigma}_f\|_{H^1(B_k)}^2.
  \end{equation*}
  We just have to study the $H^1$ norm of $\tSigma_f$ on a bubble $B_k$.
  The $L^2$ norm of $\Sigma_f$ can be
  bounded as follows:
    \begin{equation*}
    \begin{aligned}
      \|\tilde\Sigma_f\|_{L^2(B_k)}^2 & = \int_{B_k}
      \left|\dfrac{\Sigma_f(x_k^-)+\Sigma_f(x_k^+)}{2}\right|^2 +
      \left|\dfrac{\Sigma_f(x_k^-)-\Sigma_f(x_k^+)}{2R_k}\right|^2|x-c_k|^2\textrm{d}x\\
      & = 
      \left|\dfrac{\Sigma_f(x_k^-)+\Sigma_f(x_k^+)}{2}\right|^22R_k
      +
      \left|\dfrac{\Sigma_f(x_k^-)-\Sigma_f(x_k^+)}{2R_k}\right|^2\dfrac{2R_k^3}{3}\\
      & = \left|{\Sigma_f(x_k^-)+\Sigma_f(x_k^+)}\right|^2\dfrac{R_k}{2}
      +\left|{\Sigma_f(x_k^-)-\Sigma_f(x_k^+)}\right|^2\dfrac{R_k}{6}.
    \end{aligned}
  \end{equation*}
  On the other hand,
  \begin{equation*}
    \begin{aligned}
      \|\p_x \tilde\Sigma_f\|_{L^2(B_k)}^2 &= \int_{B_k}
      \left|\dfrac{\Sigma_f(x_k^-)-\Sigma_f(x_k^+)}{2R_k}\right|^2\textrm{d}x\\
      &= \left|\dfrac{\Sigma_f(x_k^-)-\Sigma_f(x_k^+)}{2R_k}\right|^2
      2 R_k= \dfrac{|\Sigma_f(x_k^-)-\Sigma_f(x_k^+)|^2}{2R_k}.
    \end{aligned}
  \end{equation*}
  We now gather the two estimates, and obtain
  \begin{align*}
      \|\tilde \Sigma_f\|_{H^1(B_k)}^2 &=
      \dfrac{|\Sigma_f(x_k^-)-\Sigma_f(x_k^+)|^2}{2R_k} + 
      \left|{\Sigma_f(x_k^-)+\Sigma_f(x_k^+)}\right|^2\dfrac{R_k}{2}\\
      &+\left|{\Sigma_f(x_k^-)-\Sigma_f(x_k^+)}\right|^2\dfrac{R_k}{6}.
  \end{align*}
Using the equations of motion of the droplets
  \eqref{eq_droplet_newton_1D_C} and the definition
  \eqref{eq_droplet_tensor_sigmak} of the stress
  tensor $\Sigma_k$, one gets
    \begin{align*}
      \|\tilde \Sigma_f\|_{H^1(B_k)}^2 &\leq
      m_k^2 |\ddot c_k|^2 \left(\dfrac{1}{2R_k} + \dfrac{R_k}{6}\right)
      + \left( \dfrac{m_k}{3}\ddot R_k + 2 \left( \mu_g \dfrac{\dot
            R_k}{R_k}- \dfrac{\kappa_k}{R_k}\right)\right)^2 \dfrac{R_k}{2} \\
      &\leq m_k^2 |\ddot c_k|^2 \left(\dfrac{1}{2R_k} + \dfrac{R_k}{6}\right)
      + \dfrac 2 9 m_k^2 |\ddot R_k|^2R_k + 8 \mu_g^2 \dfrac{|\dot
        R_k|^2}{R_k}+ 8 \dfrac{\kappa_k^2}{R_k}
    \end{align*}
    Finally, this gives the estimate
    \begin{multline*}
      \|\tilde{\Sigma}_f\|_{H^1(\Omega)}^2 \leq {8}\left[
        \|\Sigma_f\|^2_{H^1(\mathcal F)}  + \sum_{k=1}^N {(m_k)^2}
        \left( |\ddot{R}_{k}|^2{ R_k}+
      |\ddot{c}_k|^2\left( \dfrac{1}{R_k}
       +R_k \right)\right)\right.\\
\left.+ \sum_{k=1}^N \left( \mu_g^2
          \dfrac{|\dot{R}_k|^2}{R_k} +
          \dfrac{\kappa_k^2}{R_k}\right) \right]^{\frac 12},
    \end{multline*}
    which leads to the desired result since $R_k< 1$.
\end{proof}

From the above inequality we deduce the following $L^{\infty}$-bound
in case $\Sigma_{f}$ is a viscous stress tensor:

\begin{Prop}
  \label{cor_Linf_dxuf}
  Assume that $\Sigma_f \in H^1(\mathcal F)$ satisfies \eqref{eq_droplet_newton_1D_C_bis}-\eqref{eq_droplet_newton_1D_R_bis}
  with $\Sigma_k$ defined by \eqref{eq_droplet_tensor_sigmak_bis}. Assume further that $\Sigma_f$ 
  is related to $(\rho_f,u_f) \in H^1(\mathcal F) \times H^2(\mathcal F)$ \emph{via} \eqref{eq_fluid_tensor_sigmaf}.
  Then, there exists $C_1>0$ such that
  \begin{equation}
    \label{eq_Linf_dxuf}
    \|\p_x u_f\|_{L^\infty (\mathcal F)} \leq \dfrac{C_1}{\mu_f}\left(
      \|\tilde \Sigma_f\|_{H^1(\Omega)} +
      \|\mathrm{p}_f(\rho_f)\|_{L^\infty(\mathcal F)}\right).
  \end{equation}
\end{Prop}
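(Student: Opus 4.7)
The plan is to exploit the pointwise identity $\mu_f \partial_x u_f = \Sigma_f + \mathrm{p}_f(\rho_f)$ coming from \eqref{eq_fluid_tensor_sigmaf}, combined with the one-dimensional Sobolev embedding $H^1(\Omega) \hookrightarrow L^\infty(\Omega)$ applied not to $\Sigma_f$ itself but to its extension $\tilde\Sigma_f$. The point of using $\tilde\Sigma_f$ rather than $\Sigma_f$ is that the former is defined on the fixed domain $\Omega=(-1,1)$, so the embedding constant is universal (it depends only on $|\Omega|$) and, in particular, does not deteriorate with the number of bubbles $N$.

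Concretely, I would proceed in three short steps. First, from \eqref{eq_fluid_tensor_sigmaf} I write
\begin{equation*}
\mu_f \,\partial_x u_f = \Sigma_f + \mathrm{p}_f(\rho_f) \quad \text{on } \mathcal F,
\end{equation*}
and take $L^\infty(\mathcal F)$-norms, using the triangle inequality, to obtain
\begin{equation*}
\mu_f \|\partial_x u_f\|_{L^\infty(\mathcal F)} \leq \|\Sigma_f\|_{L^\infty(\mathcal F)} + \|\mathrm{p}_f(\rho_f)\|_{L^\infty(\mathcal F)}.
\end{equation*}
Second, by construction \eqref{eq_tSigmaf}, $\tilde\Sigma_f = \Sigma_f$ on $\mathcal F$, so
\begin{equation*}
\|\Sigma_f\|_{L^\infty(\mathcal F)} \leq \|\tilde\Sigma_f\|_{L^\infty(\Omega)}.
\end{equation*}
Third, Proposition \ref{prop_estima_sigmaTf} ensures that $\tilde\Sigma_f \in H^1(\Omega)$, and the standard one-dimensional embedding on the bounded interval $\Omega$ gives a universal constant $C>0$ such that $\|\tilde\Sigma_f\|_{L^\infty(\Omega)} \leq C\|\tilde\Sigma_f\|_{H^1(\Omega)}$. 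Chaining these three bounds yields \eqref{eq_Linf_dxuf} with $C_1 := \max(C,1)$.

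There is no genuine obstacle here: all the technical work has already been done in Proposition \ref{prop_estima_sigmaTf}, whose role was precisely to control $\|\tilde\Sigma_f\|_{H^1(\Omega)}$ in terms of quantities that admit $N$-independent bounds (via the energy and regularity estimates of Section \ref{sec_classical-estimates}). The only subtlety worth stressing is conceptual: had we tried to invoke a Sobolev embedding directly on the perforated fluid domain $\mathcal F$, the embedding constant would generically depend on the geometry of $\mathcal F$ (and hence on $N$ through the separation distances $|\mathcal F_k|$). Replacing $\Sigma_f$ by its extension $\tilde\Sigma_f$ on the fixed interval $\Omega$ is exactly the device that circumvents this geometric dependence, and this is why the extensions \eqref{eq_tSigmaf}--\eqref{eq_tSigmag} were introduced in the first place.
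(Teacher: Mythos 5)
Your proof is correct and follows exactly the same route as the paper's: the pointwise relation $\mu_f\,\partial_x u_f=\Sigma_f+\mathrm{p}_f(\rho_f)$, the trivial bound $\|\Sigma_f\|_{L^\infty(\mathcal F)}\leq\|\tilde\Sigma_f\|_{L^\infty(\Omega)}$, and the one-dimensional embedding $H^1(\Omega)\hookrightarrow L^\infty(\Omega)$ on the fixed interval $\Omega$. (Incidentally your sign in the first identity is the correct one; the paper's displayed line has a harmless typographical sign error.)
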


\begin{proof}
  In the fluid domain, the stress tensor writes $\Sigma_f = \mu_f \p_x
  u_f -\mathrm{p}_f$, which gives
  \begin{equation*}
    \label{eq: Linf_dxuf_1}
    \p_x u_f = \dfrac{1}\mu_f(\Sigma_f - \mathrm{p}_f(\rho_f)).
  \end{equation*}
  Hence one has
  \begin{equation*}
    \label{eq:dxuf_1}
    \|\p_x u_f\|_{L^\infty (\mathcal F)} \leq
    \dfrac{1}\mu_f(\|\Sigma_f\|_{L^\infty(\mathcal F)}
    +\|\mathrm{p}_f(\rho_f)\|_{L^\infty(\mathcal F)} ).
  \end{equation*}
  The definition of global tensor $\tilde \Sigma_f$ gives then
  \begin{equation*}
    \label{eq: Linf_sigmaf}
    \|\Sigma_f\|_{L^\infty(\mathcal F)} \leq \|\tilde\Sigma_f\|_{L^\infty(\Omega)}.
  \end{equation*}
  The $H^1(\Omega)\subset
  L^\infty(\Omega)$ embedding allows to conclude the proof.
\end{proof}

One can note here the gain of working with an extended stress tensor.
Indeed, the constant $C_1$ we obtain in the previous
proposition is independent of the position of the particles and their
radius.
This would not be \emph{a priori} the case if we wanted
to control $\partial_ xu$ by $\Sigma_f$ only.  
Nevertheless, in \eqref{eq_stress_fluid2_H1} we introduced on the
right-hand side negative powers of $R_k$ that we shall control
independently. To this end, we performed a symmetric construction with
the bubble stress-tensor $\Sigma_g$
and we provide  now a corresponding proposition:

\begin{Prop}
  \label{prop_tSigmag}
  Assume that $\Sigma_{f}$ and $(\Sigma_k)_{k=1,\ldots,N}$ are related {\em via}
  \eqref{eq_droplet_tensor_sigmak_bis}. Then
  $\tSigma_g\in H^1(\Omega)$ and there exists a constant $C_2>0$ such
  that
  \begin{equation}
    \label{eq_max_k_rkp}
    \begin{aligned}
      \|\tilde \Sigma_g\|_{H^1(\Omega)} \leq C_2 \left[ \| \tilde
        \Sigma_f\|_{H^1(\Omega)}^2 + \dfrac{1}{\min\limits_{k\in\{0,\dots,N\}}
          |\mathcal{F}_k|} \sum_{k=1}^N (m_k)^2 (|\ddot R_k|^2 + |\ddot
        c_k|^2) \right]^{1/2}.
    \end{aligned}
  \end{equation}
\end{Prop}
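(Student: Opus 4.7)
My plan is to partition $\Omega$ as $\Omega = (\cup_k B_k) \cup (\cup_{k=0}^N \mathcal{F}_k)$ and compute $\|\tilde\Sigma_g\|_{H^1(\Omega)}^2$ piece-by-piece, exploiting that $\tilde\Sigma_g$ is constant on each bubble and on $\mathcal{F}_0,\mathcal{F}_N$, and affine on each interior fluid interval $\mathcal{F}_k$, $1\leq k \leq N-1$. In particular $\partial_x \tilde\Sigma_g$ vanishes on bubbles and on the boundary intervals, so the entire derivative contribution comes from the interior fluid pieces with the explicit value
\[
\int_{\mathcal{F}_k} |\partial_x \tilde\Sigma_g|^2 = \frac{|\Sigma_{k+1}-\Sigma_k|^2}{|\mathcal{F}_k|}.
\]

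The heart of the proof is then to control the jumps $|\Sigma_{k+1} - \Sigma_k|^2$ in terms of $\|\tilde\Sigma_f\|_{H^1}$ and the accelerations. Adding and subtracting \eqref{eq_droplet_newton_1D_C_bis} and \eqref{eq_droplet_newton_1D_R_bis} yields the identities
\[
\Sigma_f(x_k^+) - \Sigma_k = \frac{m_k}{2}\ddot c_k + \frac{m_k}{6}\ddot R_k, \qquad \Sigma_f(x_k^-) - \Sigma_k = -\frac{m_k}{2}\ddot c_k + \frac{m_k}{6}\ddot R_k,
\]
so that, inserting the fundamental theorem of calculus $\Sigma_f(x_{k+1}^-) - \Sigma_f(x_k^+) = \int_{\mathcal{F}_k} \partial_x \Sigma_f$, I obtain
\[
\Sigma_{k+1} - \Sigma_k = \frac{m_{k+1}}{2}\ddot c_{k+1} - \frac{m_{k+1}}{6}\ddot R_{k+1} + \frac{m_k}{2}\ddot c_k + \frac{m_k}{6}\ddot R_k + \int_{\mathcal{F}_k} \partial_x \Sigma_f.
\]
Cauchy--Schwarz on the integral contributes $|\mathcal{F}_k| \int_{\mathcal{F}_k}|\partial_x \Sigma_f|^2$, which after dividing by $|\mathcal{F}_k|$ sums to $\|\partial_x \Sigma_f\|_{L^2(\mathcal{F})}^2 \leq \|\tilde\Sigma_f\|_{H^1(\Omega)}^2$. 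The Newton-equation contributions produce the factor $1/|\mathcal{F}_k|$ which I majorize by $1/\min_k|\mathcal{F}_k|$, and summing yields exactly the right-hand side of \eqref{eq_max_k_rkp}.

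For the $L^2$ piece I use that averaging \eqref{eq_droplet_newton_1D_R_bis} gives $\Sigma_k = \tilde\Sigma_f(c_k) - \frac{m_k}{6}\ddot R_k$ (since by construction $\tilde\Sigma_f$ equals the half-sum of the trace values at the midpoint $c_k$), hence $|\Sigma_k| \leq \|\tilde\Sigma_f\|_{L^\infty(\Omega)} + \frac{m_k}{6}|\ddot R_k|$. Bounding $\|\tilde\Sigma_f\|_{L^\infty} \lesssim \|\tilde\Sigma_f\|_{H^1}$ by the one-dimensional Sobolev embedding and using $\sum_k 2R_k \leq |\Omega| = 2$ and $\sum_k |\mathcal{F}_k| \leq 2$, the affine (or constant) formula for $\tilde\Sigma_g$ on each subinterval integrates to $L^2$-bounds of the expected form. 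The main obstacle is really the jump estimate: one has to be careful to route the bound through the Newton equations \eqref{eq_droplet_newton_1D_C_bis}-\eqref{eq_droplet_newton_1D_R_bis} directly rather than through the constitutive relation \eqref{eq_droplet_tensor_sigmak_bis}, otherwise unwanted $|\dot R_k|/R_k$ and $\kappa_k/R_k$ terms (as in Proposition~\ref{prop_estima_sigmaTf}) would appear and spoil the uniform-in-$N$ character of the estimate. Assembling the pieces then gives \eqref{eq_max_k_rkp}.
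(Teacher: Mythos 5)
Your proposal is correct and follows essentially the same route as the paper's proof: both decompose the $H^1$ norm piecewise over bubbles and fluid gaps, use the Newton equations \eqref{eq_droplet_newton_1D_C_bis}--\eqref{eq_droplet_newton_1D_R_bis} (rather than the constitutive law) to write $\Sigma_k$ in terms of $\tilde\Sigma_f$ plus $m_k\ddot c_k, m_k\ddot R_k$ corrections, and control the jump $|\Sigma_{k+1}-\Sigma_k|$ by the fundamental theorem of calculus plus these acceleration terms, invoking the embedding $H^1(\Omega)\subset L^\infty(\Omega)$ and $\sum_k 2R_k + \sum_k|\mathcal F_k| = |\Omega|$ for the $L^2$ piece. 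The only cosmetic difference is that you anchor $\Sigma_k = \tilde\Sigma_f(c_k) - \frac{m_k}{6}\ddot R_k$ at the bubble center while the paper anchors at $x_k^+$; the two identities are equivalent modulo \eqref{eq_droplet_newton_1D_C_bis}.
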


\begin{proof}
  By straightforward calculations, the definition of $\tilde \Sigma_g$ yields
  \begin{equation*}
    \label{eq_estim_sigmaTg_5}
    \begin{aligned}
      \|\tilde \Sigma_g\|_{H^1(\Omega)}^2 &\leq \sum_{k=1}^N 2 R_k
      |\Sigma_k|^2
      + |\Sigma_0|^2 |x_1^- -x_0^+|+ |\Sigma_N|^2 |x_{N+1}^- -x_N^+|\\
      &\quad+ \sum_{k=1}^{N-1}\bigg(
      \dfrac{|\Sigma_{k+1}-\Sigma_k|^2}{|x_{k+1}^- -x_k^+|}+ 2
      |\Sigma_k|^2 |x_{k+1}^--x_k^+| \\
      & \qquad \qquad + \dfrac 2 3
      |\Sigma_{k+1}-\Sigma_k|^2 |x_{k+1}^--x_k^+|\bigg) \\
      &\leq C \left( \sum_{k=1}^N 
        R_k |\Sigma_k|^2 + \sum_{k=0}^N |\mathcal F_{k}||\Sigma_k|^2 +
        \sum_{k=1}^N  \dfrac{|\Sigma_{k+1}-\Sigma_k|^2}{|x_{k+1}^-
          -x_k^+|}\right)
    \end{aligned}
  \end{equation*}
  where $C$ is a positive constant, since the length of the bubbles
  and of the fluid parts are bounded. Summing equations
  \eqref{eq_droplet_newton_1D_C} and \eqref{eq_droplet_newton_1D_R}
  leads to
  \begin{equation}
    \label{eq_estim_sigmaTg_6}
    \Sigma_k = \Sigma_f(x_k^+) -  \dfrac{m_k}{2} \left(\ddot c_k +
      \dfrac{\ddot R_k}{3} \right).
  \end{equation}
  We deduce the following estimates, with some constant $C'>0$,
  \begin{align*}
    |\Sigma_k | &\leq \|\tilde \Sigma_f\|_{L^\infty(B_k)}+ {m_k} C' \left(|\ddot c_k| +
                  |\ddot R_k| \right), \\
    |\Sigma_{k+1}- \Sigma_k| &\leq |x_{k+1}^+-x_k^+|^{1/2}\|\p_x
                               \tilde \Sigma_f\|_{L^2(x_k^+,x_{k+1}^+)} \\
                &\quad + \dfrac{m_k}{2}
                  \big(|\ddot R_k|+ |\ddot c_k|\big) + \dfrac{m_{k+1}}{2}
                  \big(|\ddot R_{k+1}|+|\ddot c_{k+1}|\big).
  \end{align*}
  One can now go back to the estimate on $\tilde\Sigma_g$. Noting the relation:
  \[
    \sum_{k=0}^N |\mathcal F_k| + \sum_{k=1}^N 2R_k = |\Omega|,
  \]
  the  embedding $H^1(\Omega)\subset L^\infty(\Omega)$ implies the expected result.
\end{proof}

As for the fluid stress tensor, we deduce from the previous computation a 
control on the $(\Sigma_k)_{k=1,\ldots,N}$ by applying again the
embedding $H^1(\Omega) \subset L^{\infty}(\Omega)$:

\begin{Cor}
  \label{cor_est_sigmag}
  Under the same assumptions as in {\bf Proposition \ref{prop_tSigmag}}, there holds:
  \begin{equation}
    \label{eq_estim_sigmag}
    \begin{aligned}
      \max_{k=1,\dots, N-1} \bigg | \mu_g \dfrac{\dot
        R_k}{R_k}-\dfrac{\kappa_k}{R_k}\bigg| \leq C_2 \bigg[ &
      \|  \tilde \Sigma_f\|_{H^1(\Omega)}^2 \\
      &+ \dfrac{1}{\min\limits_{k\in\{0,\dots,N\}} |\mathcal{F}_k|}
      \sum_{k=1}^N (m_k)^2 \big(|\ddot R_k|^2 + |\ddot c_k|^2\big)
      \bigg]^{1/2}.
    \end{aligned}
  \end{equation}
\end{Cor}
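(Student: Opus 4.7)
The plan is to recognize that the quantity in the left-hand side is, up to a sign, exactly the value that the extended tensor $\tilde\Sigma_g$ takes on each bubble. Indeed, combining the definition \eqref{eq_droplet_tensor_sigmak_bis} of $\Sigma_k$ with the identity \eqref{eq_droplet_pk} for $\pi_k = \mathrm{p}_g(\rho_k)+F_s/2 = \kappa_k/R_k$, one has
\begin{equation*}
\Sigma_k = \mu_g \frac{\dot R_k}{R_k} - \frac{\kappa_k}{R_k}, \qquad k=1,\ldots,N,
\end{equation*}
so that the quantity to be controlled is precisely $\max_k |\Sigma_k|$.

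Next, by the piecewise definition \eqref{eq_tSigmag}, $\tilde\Sigma_g$ restricted to $B_k$ is the constant $\Sigma_k$, so immediately
\begin{equation*}
\max_{k=1,\ldots,N} |\Sigma_k| \leq \|\tilde\Sigma_g\|_{L^\infty(\Omega)}.
\end{equation*}
Since $\Omega=(-1,1)$ is a fixed bounded one-dimensional domain, the embedding $H^1(\Omega)\hookrightarrow L^\infty(\Omega)$ furnishes a universal constant (independent of $N$ and of the bubble geometry) such that $\|\tilde\Sigma_g\|_{L^\infty(\Omega)} \lesssim \|\tilde\Sigma_g\|_{H^1(\Omega)}$.

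It then suffices to invoke Proposition \ref{prop_tSigmag}, whose right-hand side already has exactly the form appearing in \eqref{eq_estim_sigmag}, and to absorb the Sobolev embedding constant into a redefined $C_2$. There is essentially no obstacle here: the corollary is a direct consequence of Proposition \ref{prop_tSigmag} and the fact that $\tilde\Sigma_g$ was purposely constructed so that its bubble values coincide with the $\Sigma_k$'s. The only mild subtlety is to remember that, although the statement writes $\max_{k=1,\ldots,N-1}$, the argument controls the full range $k=1,\ldots,N$ in the same way.
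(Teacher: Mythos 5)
Your proposal is correct and follows exactly the route the paper uses: rewriting $\Sigma_k = \mu_g \dot R_k/R_k - \kappa_k/R_k$, noting $\tilde\Sigma_g \equiv \Sigma_k$ on $B_k$, and combining the Sobolev embedding $H^1(\Omega)\hookrightarrow L^\infty(\Omega)$ with Proposition \ref{prop_tSigmag}. Your remark that the argument actually controls the full range $k=1,\ldots,N$ is also accurate.
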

This latter corollary shall enable to control the radius of the bubble
from below, preventing from collapse.

\subsection{Proof of Theorem \ref{thm_existence}}
\label{sec_cauchy}
We combine now the computations of the previous section to construct a
solution on a time-interval independent of the number $N$ of bubbles.
For this, we show that the following bounds can be continued: 
\begin{enumerate}[label=(\subscript{Q}{{\arabic*}})]
\item\label{it_bound_R}
  $d_\infty\leq N R_k \leq (d_\infty)^{-1}$,
  $k=1,\ldots,N$,\\[-8pt]
\item \label{it_bound_F}
  $d_\infty\leq N |\mathcal{F}_k| \leq (d_\infty)^{-1}$,
  $k=1,\ldots,N$,\\[-8pt]
\item\label{it_bound_rho}  $\underline{\rho}_\infty\leq \rho_f\leq
  \bar{\rho}_\infty$  on $\mathcal{F}(t)$
\end{enumerate} 
and, introducing a sufficiently large $K >0:$
\begin{enumerate}[label=(\subscript{Q}{{\arabic*}})]
  \setcounter{enumi}{3}
\item\label{it_bound_H1} $\bigg[ \displaystyle \int_{\mathcal F} \mu_f  \dfrac{|\p_x
    u_f|^2}{2}\mathrm{d}x
  + \mu_g\sum_{k=1}^N \dfrac{|\dot R_k|^2}{R_k} \bigg] \leq K$,
\item\label{it_H1_bound_tensors}
  $\displaystyle \int_0^t\Bigg[ \|\tilde \Sigma_f\|_{H^1(\Omega)}^2
  + \|\tilde \Sigma_g\|_{H^1(\Omega)}^2 +
  \sum_{k=1}^N {m_k} \big(|\ddot R_k|^2 + |\ddot
  c_k|^2 \big) \Bigg]\mathrm{d}s\leq K$.
\end{enumerate}
 We keep the convention here that tildas represent
 extended stress tensors as constructed in the previous subsection.
 We prove that, if $K$ is chosen sufficiently large wrt the list of
 parameters \eqref{eq_listparametre}, then we have such estimates on a
 time interval $(0,T)$ that depends only on the same list of parameters
 \eqref{eq_listparametre} (possibly {\em via} $K$).

\medskip

Technically, we apply a continuation argument based on the {\em a
  priori} assumption that the solution exists.
The precise statement is the following proposition in which we
denote $(\mathscr{Q}_i)_{i=1,\ldots,5}$ the estimates
corresponding to the above $(Q_i)_{i=1,\ldots,5}$ where large
inequalities are replaced with strict inequalities.
Tacitly, all constants that are introduced in the following
proposition may depend on the list of parameters
\eqref{eq_listparametre}.

\begin{Prop}
  \label{prop_estimates}
 There exists $K_\infty>0$ such that, for any $K >K_{\infty}$ there
 exists  $T_{\infty}[K] >0$ for which the following statement holds:
 if $T \leq T_{\infty}[K]$ and
 $((\rho_f,u_f),(c_k,R_k)_{k=1,\ldots,N})$ is a classical solution to
 \eqref{eq_fluid_mass}-\eqref{eq_droplet_tensor_sigmak} on $(0,T)$
 satisfying \ref{it_bound_R}-\ref{it_H1_bound_tensors}
  then it satisfies also ($\mathscr Q_1$)-($\mathscr Q_5$).
 \end{Prop}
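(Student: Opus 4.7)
The plan is a continuation argument: assuming the non-strict bounds $(Q_1)$--$(Q_5)$ hold on $(0,T)$, I would use the energy and regularity identities of Section \ref{sec_classical-estimates} together with the extended-stress-tensor estimates of Section \ref{sec_stress-estimates} to derive strict improvements $(\mathscr Q_1)$--$(\mathscr Q_5)$, provided $K$ exceeds some $K_\infty$ depending only on the list \eqref{eq_listparametre} and $T \le T_\infty[K]$ is chosen small. The underlying philosophy is that each right-hand side splits into an initial-data constant plus nonlinear terms that carry a $\sqrt{T}$ factor after Cauchy--Schwarz, so that choosing $T$ small strictly improves each bound. First, {\bf Corollary \ref{corollaire_bound_energy}} furnishes an $N$-independent bound $E_0$ on the total kinetic-plus-internal energy and on $\int_0^T (\int |\partial_x u_f|^2 + \sum_k |\dot R_k|^2/R_k)$. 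Writing
\[
R_k(t) - R_k^0 = \int_0^t \dot R_k \, ds, \qquad |\mathcal F_k(t)| - |\mathcal F_k^0| = \int_0^t \bigl(u_f(x_{k+1}^-) - u_f(x_k^+)\bigr) ds,
\]
Cauchy--Schwarz against this dissipation yields uniform-in-$k$ deviations bounded by $C\sqrt{T}$, so the strict initial bounds \ref{it_bound_R0}--\ref{it_bound_F0} upgrade to $(\mathscr Q_1)$--$(\mathscr Q_2)$ for small $T$.

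For $(\mathscr Q_3)$, I would combine {\bf Proposition \ref{prop_estima_sigmaTf}} with {\bf Corollary \ref{cor_Linf_dxuf}} to bound $\|\partial_x u_f\|_{L^\infty(\mathcal F)}$ by $C(\|\tilde\Sigma_f\|_{H^1(\Omega)} + \|\mathrm{p}_f(\rho_f)\|_{L^\infty})$. Integrating the mass equation along characteristics, $\rho_f(t,X(t,x_0)) = \rho_f^0(x_0)\exp(-\int_0^t \partial_x u_f)$, and applying Cauchy--Schwarz in time together with $(Q_5)$ yield $|\ln(\rho_f/\rho_f^0)| \le C(K)\sqrt{T}$; the strict data \eqref{it_bound_rho0} then keeps $\rho_f$ inside $(\underline\rho_\infty,\bar\rho_\infty)$ for small $T$. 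For $(\mathscr Q_4)$, I would apply {\bf Corollary \ref{cor_est_H1}}: the dangerous cubic term on the right-hand side is bounded by
\[
\int_0^T\!\! \int_{\mathcal F} \mu_f |\partial_x u_f|^3 \le \int_0^T \|\partial_x u_f\|_{L^\infty(\mathcal F)} \|\partial_x u_f\|_{L^2(\mathcal F)}^2 \,dt \le C\sqrt{T}\, K^{3/2},
\]
using the $L^\infty$ bound above together with $(Q_5)$, and the bubble cubic $\sum |\dot R_k|^3/R_k^2$ is handled symmetrically via {\bf Corollary \ref{cor_est_sigmag}}. The sup-in-time terms ($\sum \kappa_k \dot R_k/R_k$ and $\int \mathrm{p}_f |\partial_x u_f|$) are absorbed into the left-hand side by Young's inequality. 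Picking $K > 4 E_1$ and then $T_\infty[K]$ so that $C\sqrt{T}K^{3/2} \le K/4$ gives the strict $(\mathscr Q_4)$.

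For $(\mathscr Q_5)$, I would integrate \eqref{eq_stress_fluid2_H1} and \eqref{eq_max_k_rkp} in time. The $\int_0^T \|\Sigma_f\|_{H^1(\mathcal F)}^2$ piece reduces, via $\partial_x \Sigma_f = \rho_f(\partial_t u_f + u_f \partial_x u_f)$ from the momentum equation \eqref{eq_fluid_NS_1D} and the now-controlled density, to the material-derivative dissipation on the left-hand side of \eqref{eq_bound_H1_1}, which has just been bounded at step four. The bubble contributions $m_k^2(|\ddot R_k|^2 + |\ddot c_k|^2/R_k)$ in \eqref{eq_stress_fluid2_H1} and $(\min|\mathcal F_k|)^{-1} \sum m_k^2(|\ddot R_k|^2 + |\ddot c_k|^2)$ in \eqref{eq_max_k_rkp} collapse, under the scaling \eqref{eq_scaling} and $(Q_1)$--$(Q_2)$ which give $m_k/R_k = O(1)$ and $m_k N = O(1)$, to multiples of $\sum_k m_k(|\ddot c_k|^2 + |\ddot R_k|^2)$, precisely the bubble dissipation on the left-hand side of \eqref{eq_bound_H1_1}. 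Combining, $\int_0^T (\|\tilde\Sigma_f\|_{H^1}^2 + \|\tilde\Sigma_g\|_{H^1}^2 + \sum_k m_k(|\ddot c_k|^2 + |\ddot R_k|^2))$ is bounded by a constant depending only on \eqref{eq_listparametre} plus a $\sqrt{T}$-small term, yielding the strict $(\mathscr Q_5)$.

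The main technical obstacle is the $N$-bookkeeping. Taken at face value, {\bf Propositions \ref{prop_estima_sigmaTf}} and {\bf \ref{prop_tSigmag}} produce factors $1/R_k$ and $1/\min|\mathcal F_k|$ that each behave like $N$, which would blow up uniformity. These are, however, exactly matched by the $m_k^2 \sim 1/N^2$ that accompanies each Newton-equation term, so that summing over the $N$ bubbles yields an $O(1)$ total. Keeping scrupulous track of this cancellation while simultaneously closing the cubic nonlinearity in Corollary \ref{cor_est_H1} via the $L^\infty$ estimate of Corollary \ref{cor_Linf_dxuf} is the technical heart of the argument; once it is done, the small-$T$ bootstrap closure is essentially automatic and the interdependencies among $(\mathscr Q_1)$--$(\mathscr Q_5)$ unwind in the order above.
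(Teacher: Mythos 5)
Your overall plan — a continuation argument proving $(\mathscr Q_1)$–$(\mathscr Q_5)$ in order from the non-strict $(Q_1)$–$(Q_5)$, with small-$T$ closure via $\sqrt{T}$ factors — is exactly the paper's strategy, and your steps for $(\mathscr Q_3)$, $(\mathscr Q_4)$, $(\mathscr Q_5)$ match the paper in spirit. However, there is a genuine gap in your argument for $(\mathscr Q_1)$ and $(\mathscr Q_2)$.

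You propose to control $R_k(t)-R_k^0 = \int_0^t \dot R_k$ (resp. $|\mathcal F_k(t)|-|\mathcal F_k^0|$) by Cauchy--Schwarz against the energy dissipation $\int_0^T\big(\int_{\mathcal F}|\partial_x u_f|^2 + \sum_k|\dot R_k|^2/R_k\big)\le E_0/\min(\mu_f,\mu_g)$. But this only gives, for a single $k$,
\[
|R_k(t)-R_k^0| \le \Big(\int_0^t \tfrac{|\dot R_k|^2}{R_k}\Big)^{1/2}\Big(\int_0^t R_k\Big)^{1/2}\lesssim \sqrt{\tfrac{T E_0}{N}},
\]
whereas $R_k^0 \sim 1/N$. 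The relative deviation is therefore $\sim\sqrt{TE_0 N}$, which blows up as $N\to\infty$: the energy dissipation is a \emph{summed}-in-$k$ bound, and using it for a single bubble loses a factor $N$. The same issue afflicts $|\mathcal F_k(t)| - |\mathcal F_k^0|$. The paper's proof avoids this by exploiting the fact that the \emph{extended} gas stress tensor $\tilde\Sigma_g$ is in $H^1(\Omega)$, and hence by the Sobolev embedding $H^1(\Omega)\subset L^\infty(\Omega)$ one gets a \emph{pointwise in $k$} (i.e. $L^\infty$-in-$x$) bound on $\Sigma_k = \mu_g\dot R_k/R_k - \kappa_k/R_k$; this is Corollary \ref{cor_est_sigmag}, and it yields $\int_0^T\max_k|\dot R_k/R_k| \lesssim \sqrt{TK}$ (Lemma \ref{lem_RkpsRk}). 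Because the bound is on $|\dot R_k/R_k|$, not $|\dot R_k|$, integrating gives a \emph{multiplicative} control $R_k^0 e^{-1/2}<R_k<R_k^0 e^{1/2}$, which scales correctly with $R_k^0\sim 1/N$. Similarly, the $L^\infty$ control on $\partial_x u_f$ via Corollary \ref{cor_Linf_dxuf} and Lemma \ref{lem_uf} gives the multiplicative control on $|\mathcal F_k|$. You invoke the $L^\infty$ stress-tensor bounds for $(\mathscr Q_3)$–$(\mathscr Q_5)$ but fail to use them where they are most essential, namely for $(\mathscr Q_1)$–$(\mathscr Q_2)$; without them the continuation argument cannot start.
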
  

The proof of {\bf Proposition \ref{prop_estimates}} is the content of
Appendix~\ref{app_proof-prop-refpr}. We explain here how it implies
{\bf Theorem \ref{thm_existence}}. For this, given $K >0$ we
introduce:
\[
  \mathcal I := \{T \in (0,\infty) \text{ s.t. the unique classical solution
    exists on $(0,T)$ }\text{and satisfies $(Q_1)$--$(Q_5)$}  \}.
\]
Firstly, thanks to the local-in-time existence result, there exists $T_0$
depending on $N$ such that we have a classical solution on $(0,T_0).$
Indeed, for such a solution the radius $R_k$ and $c_k$ are continuous in time. 
Since we assume initially \ref{it_bound_R0}-\ref{it_bound_F0} (resp. \eqref{it_bound_rho0}) we have
that, up to restrict $T_0,$ this solution satisfies
\ref{it_bound_R}-\ref{it_bound_F} (resp.  \ref{it_bound_rho}) on $[0,T_0].$
Similarly, we remark that the quantities on the left-hand side of
\ref{it_bound_H1}-\ref{it_H1_bound_tensors} are continuous
time-dependent functions of the classical solution. Since 
the left-hand side of \ref{it_bound_H1} is controlled 
initially by $\|u_f^0\|_{H^1_0(\Omega)}$ and parameters involved in \eqref{eq_listparametre}
(see the proof of {\bf Corollary \ref{cor_est_H1}}), there exists $K_0$ sufficiently large depending only on
the list of parameters \eqref{eq_listparametre} such that we can
enforce  \ref{it_bound_H1}-\ref{it_H1_bound_tensors} on $[0,T_0]$ also
whatever the value of  $K >K_0$.

\medskip 

Let  fix now $K = \max(K_0,K_{\infty})$ with $K_{\infty}$ given by
{\bf Proposition \ref{prop_estimates}} and denote $T_{\infty} =
T_{\infty}[K].$ By the previous arguments, we have that $[0,T_0]
\subset \mathcal I.$ We show now that
$[0,T_{\infty}] \subset \mathcal I$ which shall end the proof. By restriction,
$\mathcal I \cap [0,T_{\infty}]$ is a closed subinterval of $[0,T_{\infty}]$ containing
$[0,T_0].$ Let us prove that $\mathcal I \cap [0,T_{\infty}]$ is open
(in $[0,T_{\infty}]$).
Indeed, assume $[0,T]$ is a strict subinterval of  $[0,T_{\infty}]$ in
$\mathcal I,$ then we can apply {\bf Proposition \ref{prop_estimates}}
and the solution satisfies ($\mathscr Q_1$)-($\mathscr Q_5$)
on $[0,T].$ It remains to show that we can continue the solution beyond $[0,T].$
The inequalities  ($\mathscr Q_1$)-($\mathscr Q_5$)  being strict,
the large inequalities \ref{it_bound_R}-\ref{it_H1_bound_tensors}
shall be satisfied on a slightly longer interval by continuity. To
extend the solution, we note that \ref{it_bound_R}-\ref{it_bound_F} (resp.  \ref{it_bound_rho}) 
entail "a minimum distance between" and "a minimum radius of" bubbles
(resp.   strictly positive distance to vacuum)
on $[0,T]$. Inequality \ref{it_bound_H1} also ensures a (uniform)
bound from above for $\|u_f\|_{H^1(\mathcal F)}$ on $[0,T].$ 
By Proposition \ref{prop_rhoH1} of Appendix
\ref{sec_rhoH1}  we have also a uniform bound for $\|\rho_f\|_{H^1(\mathcal F)}$
(up to take $T_{\infty}$ smaller).
We can then apply the local-in-time existence result with initial data
$((\rho_f(T',\cdot),u_f(T',\cdot)),(c_k(T'),R_k(T'))_{k=1,\ldots,N})$
for $T'$
arbitrary close to $T.$ This yields a solution on some time-interval
$\Delta T$ (independent of $T',$ given the uniform bound above).
By concatenation, we obtain  a solution on $(0,T' + \Delta T)$ where
$T'+\Delta T > T$ for a well-chosen $T'.$

\medskip

To conclude this section, we mention that the proof above entails that
we have the following corollary to {\bf Theorem \ref{thm_existence}}:

\begin{Cor}
  \label{cor_existence}
  The unique classical solution to
  \eqref{eq_fluid_mass}-\eqref{eq_droplet_tensor_sigmak} on
  $[0,T_{\infty}]$ satisfies the bounds
  \ref{it_bound_R}-\ref{it_bound_F} (resp.  \ref{it_bound_rho}) with
  $d_{\infty}$ corresponding to \ref{it_bound_R0}-\ref{it_bound_F0}
  (res. $\underline{\rho}_{\infty}$, $\bar{\rho}_{\infty}$ corresponding to \eqref{it_bound_rho0})  and \ref{it_bound_H1}-\ref{it_H1_bound_tensors} with $K_{\infty}$
  depending on the list of parameters \eqref{eq_listparametre}.
\end{Cor}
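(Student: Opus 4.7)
The plan is to read Corollary \ref{cor_existence} directly off the continuation argument used to prove Theorem \ref{thm_existence}. By construction, the set $\mathcal I$ appearing in that proof encodes precisely the conjunction of \ref{it_bound_R}--\ref{it_H1_bound_tensors} on its member intervals, and the open--closed argument establishes $[0,T_\infty]\subset \mathcal I$; the solution on $[0,T_\infty]$ therefore satisfies the five bounds automatically. What remains is the bookkeeping of uniform constants, which is what the corollary isolates.

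First I would track the geometric and thermodynamic constants. The $d_\infty$ appearing in \ref{it_bound_R}--\ref{it_bound_F} is inherited verbatim from the initial hypotheses \ref{it_bound_R0}--\ref{it_bound_F0}: the factor-$2$ gap between $2d_\infty$ at $t=0$ and $d_\infty$ for the propagated bound is exactly the slack that Proposition \ref{prop_estimates} consumes when upgrading $(Q_1)$--$(Q_2)$ to the strict versions $(\mathscr Q_1)$--$(\mathscr Q_2)$ driving the opening step of the continuation. The same remark applies to $\underline{\rho}_\infty$ and $\bar{\rho}_\infty$ in \ref{it_bound_rho}, which are inherited with the analogous factor-$2$ buffer from \eqref{it_bound_rho0}.

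Next I would pin down the $H^1$-level constant. I take $K=\max(K_0,K_\infty)$ exactly as in the proof of Theorem \ref{thm_existence}, where $K_0$ is a bound on the initial versions of \ref{it_bound_H1}--\ref{it_H1_bound_tensors} supplied by Corollary \ref{cor_est_H1} (which depends only on $M_\infty$, $d_\infty$ and $\|u_f^0\|_{H^1_0(\Omega)}$), and $K_\infty$ is the threshold delivered by Proposition \ref{prop_estimates}. Both are functions of the list \eqref{eq_listparametre} alone, so this $K$ serves as the uniform $K_\infty$ announced in the statement.

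There is no genuinely new obstacle here: the strict-versus-large distinction of $(\mathscr Q_i)$ vs.\ $(Q_i)$ in Proposition \ref{prop_estimates} --- which is what made the open step of the continuation argument go through --- is also what makes the corollary essentially immediate. If one insisted on an independent proof, the only conceptual point to be careful about is that the bounds must be proved with the initial constants (not merely with some constants depending on the solution), and this is why the buffer in the initial hypotheses is crucial: it ensures that the output constants $d_\infty$, $\underline{\rho}_\infty$, $\bar{\rho}_\infty$, $K_\infty$ can be chosen a priori from the data.
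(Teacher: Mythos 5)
Your proof is correct and takes essentially the same approach as the paper: the paper gives no separate argument for this corollary, stating only that "the proof above entails" it, and your write-up is a faithful, somewhat more detailed unpacking of how the continuation argument in the proof of Theorem \ref{thm_existence} --- the set $\mathcal I$ encoding $(Q_1)$--$(Q_5)$, the choice $K=\max(K_0,K_\infty)$, and the factor-$2$ buffer between \ref{it_bound_R0}--\ref{it_bound_F0} and \ref{it_bound_R}--\ref{it_bound_F} --- directly yields the stated uniform bounds on $[0,T_\infty]$.
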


\section{Construction of macroscopic unknowns}
\label{sec_homog-probl}

In this section, we detail the construction of  the unknowns for the
macroscopic model starting from a sequence of  solutions
to the microscopic model with increasing number of gas bubbles. The full justification of the system
\eqref{eq_macromodel}--\eqref{eq_bT}  is postponed to the next section.
{
From now on,  we fix initial data $(\brho_f^0,\brho_g^0,\bu^0,\balpha_f^{0},\balpha_g^{0},\bar{f}_g^{0})$ for the macroscopic model. 
All these quantities are $H^1(\Omega)$ functions. We assume further that they fulfill conditions \eqref{eq_cond0}-\eqref{eq_cond1}-\eqref{eq_cond2}. 
}

\medskip


{
The framework identified in the previous section must be adapted for homogenization purpose.  For instance, given a $N$-bubble solution the gas unknowns at-hand are {\em a priori} the discrete set of center/radius/mass  $(c_k,R_k,m_k)_{k=1,\ldots,N}.$ From them,  we can reconstruct a (functional) density and a covolume by defining for instance: 
\begin{equation} \label{eq_def_dens&cv}
f_g^{(N)}  := \sum_{k=1}^N \dfrac{1}{2NR_k}\mathds{1}_{B_k}  \qquad
\rho_g^{(N)} := \sum_{k=1}^N\dfrac{m_k}{2R_k}  \mathds{1}_{B_k}.
\end{equation}
However,  these reconstructed functions experience $O(1)$ jumps through bubble/fluid interfaces and might not have sufficient regularity to perform the homogenization process.  To gain regularity, we shall propagate an initial regularity through a well-chosen evolution equation (which extends the one satisfied by $f_g^{(N)},\rho_{g}^{(N)}$ on the $B_k$).  However, this requires to be able to construct regular initial covolume and density (with uniform bounds in terms of $N$).  This is obtained  with the following proposition:
\begin{Prop} \label{prop_cstr_id}
Under the assumption that the initial data fulfill the conditions \eqref{eq_cond0}-\eqref{eq_cond1}-\eqref{eq_cond2},  there exist sequences of initial bubble 
center/radii  $((c_k^{(N),0},R_k^{(N),0})_{k=1,\ldots,N})_{N\in \mathbb N}$
and masses $(m_k^{(N)})_{k=1,\ldots,N}$  so that:
\begin{itemize}
\item[i)]  \ref{it_bound_mk0}-\ref{it_bound_R0}-\ref{it_bound_F0} are satisfied with $M_{\infty}$ and $d_{\infty}$ independent of $N$,
\item[ii)] there exist $H^1(\Omega)$ extensions  $(\tilde{f}_g^{(N),0},\tilde{\rho}_g^{(N),0})$ of the associated reconstructed covolumes and densities  such that:\\[-8pt]
\begin{itemize}
\item[$\bullet$] $(\tilde{f}_g^{(N),0},\tilde{\rho}_g^{(N),0})$ is bounded in $H^1(\Omega)$\\[-6pt]
\item[$\bullet$] for arbitrary $\beta \in C^1([0,\infty)\times[0,\infty))$ there holds: 
\[
\beta(\tilde{\rho}_g^{(N),0},\tilde{f}_g^{(N),0})\mathds 1_{\Omega \setminus \bar{\mathcal F}^{(N),0}} \rightharpoonup \bar{\alpha}_g^0 \beta(\bar{\rho}_g^0,\bar{f}_{g}^{0}) \text{  in $\mathcal D'(\Omega).$}
\]
\end{itemize}

\end{itemize}
\end{Prop}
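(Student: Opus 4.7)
The overall strategy is to place the bubble centers at the quantiles of the target ``number density'' $\mu_0 := \balpha_g^0 \bar{f}_g^0$ and then choose the radii and masses so that each bubble samples the macroscopic covolume and density at its center. First, assumption \eqref{eq_cond2} combined with \eqref{eq_cond0}-\eqref{eq_cond1} and the $H^1(\Omega) \hookrightarrow L^\infty(\Omega)$ embedding tells us that $\mu_0$ is a probability density pinched between positive constants. Setting $F_0(x) := \int_{-1}^x \mu_0(s)\,ds$, I define for $k = 1, \ldots, N$
\begin{equation*}
  c_k^{(N),0} := F_0^{-1}\!\bigl(\tfrac{k-1/2}{N}\bigr), \quad R_k^{(N),0} := \frac{1}{2N\,\bar{f}_g^0(c_k^{(N),0})}, \quad m_k^{(N)} := \frac{\brho_g^0(c_k^{(N),0})}{N\,\bar{f}_g^0(c_k^{(N),0})}.
\end{equation*}
By design, the reconstructed quantities \eqref{eq_def_dens&cv} then satisfy $f_g^{(N),0} \equiv \bar{f}_g^0(c_k^{(N),0})$ and $\rho_g^{(N),0} \equiv \brho_g^0(c_k^{(N),0})$ on each $B_k^{(N),0}$.

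To establish (i), I note that $N R_k^{(N),0} = 1/(2\bar{f}_g^0(c_k^{(N),0}))$ lies in a fixed interval thanks to $f_{\min} \leq \bar{f}_g^0 \leq \|\bar{f}_g^0\|_{L^\infty(\Omega)}$, which gives \ref{it_bound_R0}; a similar computation together with the scaling $\gamma_s \sim 1/N$ yields \ref{it_bound_mk0}. For \ref{it_bound_F0}, the mean value theorem applied to $F_0^{-1}$ gives $c_{k+1}^{(N),0} - c_k^{(N),0} = 1/(N\mu_0(\xi_k))$ for some intermediate $\xi_k$, so
\[
  N|\mathcal F_k^{(N),0}| = \frac{1}{\mu_0(\xi_k)} - \frac{1}{2\bar{f}_g^0(c_k^{(N),0})} - \frac{1}{2\bar{f}_g^0(c_{k+1}^{(N),0})} \sim \frac{\balpha_f^0}{\mu_0},
\]
which is bounded above and below uniformly thanks to \eqref{eq_cond1}. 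The boundary intervals $\mathcal F_0, \mathcal F_N$ are handled analogously.

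Next, I define the extensions as the unique continuous piecewise-affine functions on $\Omega$ equal to $\bar{f}_g^0(c_k^{(N),0})$ (resp. $\brho_g^0(c_k^{(N),0})$) on each $B_k^{(N),0}$ and linearly interpolating between consecutive bubble values across each internal fluid interval $\mathcal F_k^{(N),0}$ (with constant prolongation on $\mathcal F_0, \mathcal F_N$). The uniform $L^\infty$ bound is immediate from $H^1(\Omega) \hookrightarrow L^\infty(\Omega)$. For the gradient, Cauchy--Schwarz yields
\[
  \int_{\mathcal F_k^{(N),0}}\!\!\!|\partial_x \tilde{f}_g^{(N),0}|^2 \mathrm{d}x = \frac{|\bar{f}_g^0(c_{k+1}^{(N),0}) - \bar{f}_g^0(c_k^{(N),0})|^2}{|\mathcal F_k^{(N),0}|} \leq \frac{c_{k+1}^{(N),0}-c_k^{(N),0}}{|\mathcal F_k^{(N),0}|} \int_{c_k^{(N),0}}^{c_{k+1}^{(N),0}}\!\!\!|\partial_x \bar{f}_g^0|^2 \mathrm{d}x,
\]
and the ratio $(c_{k+1}-c_k)/|\mathcal F_k|$ is uniformly bounded by the previous step. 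Summing gives $\|\partial_x \tilde{f}_g^{(N),0}\|_{L^2}^2 \leq C \|\bar{f}_g^0\|_{H^1(\Omega)}^2$, and the same argument applies to $\tilde\rho_g^{(N),0}$.

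Finally, for the weak convergence, fix $\varphi \in C^\infty_c(\Omega)$. Since the extensions equal $\bar{f}_g^0(c_k^{(N),0})$ and $\brho_g^0(c_k^{(N),0})$ on $B_k^{(N),0}$, using $\int_{B_k} \varphi = 2R_k^{(N),0} \varphi(c_k^{(N),0}) + O((R_k^{(N),0})^3)$ together with the identity $2R_k^{(N),0} = \balpha_g^0(c_k^{(N),0})/(N\mu_0(c_k^{(N),0}))$ gives
\[
  \int_\Omega \beta(\tilde\rho_g^{(N),0}, \tilde{f}_g^{(N),0})\, \mathds 1_{\Omega \setminus \bar{\mathcal F}^{(N),0}}\, \varphi \, \mathrm{d}x = \sum_{k=1}^N \frac{\beta(\brho_g^0(c_k^{(N),0}), \bar{f}_g^0(c_k^{(N),0}))\, \balpha_g^0(c_k^{(N),0})\,\varphi(c_k^{(N),0})}{N\, \mu_0(c_k^{(N),0})} + O(N^{-1}).
\]
By the quantile placement of the $c_k^{(N),0}$, the principal term is exactly the Riemann sum associated with the change of variables $y = F_0(x)$ applied to the integrand $\beta(\brho_g^0, \bar{f}_g^0)\, \balpha_g^0 \varphi / \mu_0$; it converges to $\int_\Omega \beta(\brho_g^0, \bar{f}_g^0)\, \balpha_g^0 \varphi \, \mathrm{d}x$ by continuity of the integrand. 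The main technical delicacy is orchestrating all the uniform bounds simultaneously: in particular \eqref{eq_cond1} plays a double role, both preventing fluid intervals from collapsing and ensuring that $c_{k+1}-c_k$ controls $|\mathcal F_k|$ from above, which is precisely what makes the $H^1$-bound on the extensions uniform in $N$.
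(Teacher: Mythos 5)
Your construction is essentially the paper's: quantile placement of centers via the cumulative distribution of $\balpha_g^0\bar f_g^0$, radii $\sim 1/(2N\bar f_g^0(c_k))$, masses $\sim 2R_k\brho_g^0(c_k)$, piecewise-affine extensions constant on bubbles and interpolating across fluid intervals (the paper phrases the extension with hat functions $\psi_k$, but the resulting function is the same), a Cauchy--Schwarz telescope for the $H^1$ bound, and recognition that the pairing against a test function is a Riemann sum for $\int \balpha_g^0\beta(\brho_g^0,\bar f_g^0)\varphi$.

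One point where you gloss over something the paper handles explicitly: the lower bound on $N|\mathcal F_k^{(N),0}|$. Your mean-value formula
\[
N|\mathcal F_k^{(N),0}| = \frac{1}{\mu_0(\xi_k)} - \frac{1}{2\bar f_g^0(c_k^{(N),0})} - \frac{1}{2\bar f_g^0(c_{k+1}^{(N),0})}
\]
involves evaluations at three \emph{distinct} points, so the clean cancellation yielding $\balpha_f^0/\mu_0$ is only asymptotic. For a uniform-in-$N$ lower bound you either need a modulus-of-continuity estimate (using $H^1\hookrightarrow C^{0,1/2}$) to control the discrepancy between $\xi_k,c_k,c_{k+1}$, or you restrict to $N\geq N_0$ large enough that the discrepancy is dominated by $\alpha_{\min}/\|\mu_0\|_{L^\infty}$. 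The paper sidesteps this by assuming, up to a localizing argument, that $(1-\alpha_{\min})\|\bar f_g^0\|_{L^\infty(\Omega)}<f_{\min}$, which makes the crude pointwise bound $c_{k+1}-c_k \geq 1/((N+1)\|\mu_0\|_{L^\infty}) > (R_k+R_{k+1})$ immediate. Your MVT approach is conceptually sharper (it exposes the exact limit $\balpha_f^0/\mu_0$ without the localization hypothesis), but as written the uniform positivity for all $N$ is asserted rather than proved; a one-line remark about large $N$ or Hölder continuity would close this.
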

\begin{proof}
Up to a localizing argument, we give a proof in the case:
\[
(1-\alpha_{min}) \|\bar{f}_g^0\|_{L^{\infty}(\Omega)} < f_{min} :=  \inf_{\Omega} \bar{f}_g^0.
\]
To construct our gas bubble, we note that $\bar{\alpha}_g^0\bar{f}_g^{0}$ is a probability density on $\Omega.$ Then, we might construct the
associated cumulative distribution function:
\[
F_g(x) = \int_{-1}^{x} \balpha_g^0 (x) \bar{f}_g^{0}(x){\rm d}x. 
\]
With assumptions \eqref{eq_cond0}-\eqref{eq_cond1}-\eqref{eq_cond2}, this is a $C^1$ one-to-one mapping $\bar{\Omega} \to [0,1]$ with $F'_g \geq \alpha_{min} f_{min}$ on $\Omega.$ 
 We set then:
\begin{equation} \label{eq_calculid}
c_k^0 := F_g^{-1} \left( \dfrac{k}{N+1}\right), 
\quad 
R_k^0 := \dfrac{1}{2N} [\bar{f}_g(c^0_k)]^{-1}
\quad 
m_k :=  2R_k^0 \bar{\rho}_g^0 (c^0_k) 
\quad 
\text{for $k=1,\ldots,N.$}    
\end{equation}
Considering the bounds from above and from below for $F'_g,$ we obtain that:
\[
\dfrac{1}{N+1} \dfrac{1}{(1-\alpha_{min}) \|\bar{f}_g^0\|_{L^{\infty}(\Omega)}} \leq c_{k+1}^0 - c_{k}^0 \leq \dfrac{1}{N+1} \dfrac{1}{\alpha_{min} f_{min}}
\] 
while
\[
\dfrac{1}{2N \|\bar{f}_g^0\|_{L^{\infty}(\Omega)}} \leq  R_k^0 \leq \dfrac{1}{2N} \dfrac{1}{f_{min}}. 
\]
In particular 
\begin{align*}
|\mathcal F_{k}^0| & = (c_{k+1}^0 - R_{k+1}^0 ) - (c_k^0 + R_k^0) \geq \dfrac{1}{N} \left(\dfrac{N/(N+1)}{(1- \alpha_{min})\|\bar{f}_g^0\|_{L^{\infty}(\Omega)} }-\dfrac{1}{f_{min}}.\right) \\
	& \leq c_{k+1}^0 - c_k^0  \leq \dfrac{1}{N} \dfrac{1}{\alpha_{min} f_{min}}
\end{align*}
where $N/((N+1){(1- \alpha_{min}))\|\bar{f}_g^0\|_{L^{\infty}(\Omega)} }-{1}/{f_{min}} >0$
by \eqref{eq_cond1} for $N$ large. 
Finally,  we have:
\[
\dfrac{1}{N} \dfrac{\rho_{min}}{\|\bar{f}_g^0\|_{L^{\infty}(\Omega)}}\leq m_k \leq \dfrac{1}{N} \dfrac{\|\bar{\rho}_g^0\|_{L^{\infty}(\Omega)}}{f_{min}} .
\]
Item i) is  satisfied.

For item ii),  we remark that the reconstructed densities and covolumes read:
\[
f_g^{(N),0}  := \sum_{k=1}^N \dfrac{1}{2NR_k^0}\mathds{1}_{B_k^0}  \qquad
\rho_g^{(N),0} := \sum_{k=1}^N\dfrac{m_k}{2R_k^0}  \mathds{1}_{B_k^0}.
\]
We recall that we denote $B_k^0 = (x_k^{-},x_k^{+})$ where
$x_k^{\pm} = c_k^0 \pm R_k^0$ (and $x_0^{+} = -1,$ $x_{N+1}^{-}$ =1).
At this point, we note that by item i),  we have:
\[
\min_{k \in \{0,\ldots,N\}} |x_{k+1}^{-} - x_{k}^+| \geq \dfrac{1}{2d_{\infty}N}.
\]
Consequently, for $k=2,\ldots,N-1.$ we can construct a piecewise affine function $\psi_k^0$ wich satisfies $\psi_{k}^0 = 1$ on $B_k^0,$ that vanishes in $x_{k+1}^{-}$
and $x_{k-1}^{+}$ and further away from $B_k^0.$ For $k=1$ and $k=N$
we define similarly $\psi_{1}^{0}$ and $\psi_{N}^0$ up to the condition
that $\psi_1^{0}$ is  constant equal to $1$ between $-1$ and $B_1^0$ (resp. $\psi_N^{0}$ is constant equal to $1$ between $B_N^0$ and $1$).
Then, we set:
\[
\tilde{f}_g^{(N),0}  := \sum_{k=1}^N \dfrac{1}{2NR_k^0}\psi_k^0  \qquad
\tilde{\rho}_g^{(N),0} := \sum_{k=1}^N\dfrac{m_k}{2R_k^0}  \psi_k^0.
\]
By standard computations, we have for instance:
\begin{align*}
\|\tilde{f}_{g}^{(N),0}\|^2_{L^2(\Omega)} & \leq  \sum_{k=1}^N \dfrac{1}{N^2 |R_k^0|^2} \|\psi_{k}^{0}\|_{L^2(\Omega)}^2 \\
& \lesssim \dfrac{1}{N} \sum_{k=1}^N \dfrac{1}{N^2|R_k^0|^2} \lesssim \|\bar{f}_g^0\|^2_{L^{\infty}(\Omega)}
\end{align*}
where the first inequality on the second line involves a constant depending on $d_{\infty}.$ We also derive using that $\psi_{k+1} = 1- \psi_k$ on $Supp(\psi_{k}') \cap Supp(\psi_{k+1}'):$ 
\begin{align*}
\|\partial_x \tilde{f}_{g}^{(N),0}\|^2_{L^2(\Omega)} 
& \lesssim   \sum_{k=1}^{N-1} \left[ \dfrac{1}{N R_{k+1}^0} - \dfrac{1}{NR_k^0} \right]^{2} N  \\
& \lesssim \sum_{k=1}^{N-1} N \left| \int_{c_k^0}^{c_{k+1}^0} \partial_x \bar{f}_g^0(z){\rm d}z \right|^2  \lesssim \|\partial_x \bar{f}_g^0\|^2_{L^2(\Omega)}.
\end{align*}
In these computations,  we use extensively the definitions \eqref{eq_calculid} and also that $|B_k^0|$ and $|\mathcal F_k^0|$ are both of size $O(1/N).$
Similar arguments yield  that:
\[
\|\tilde{\rho}_g^{(N),0}\|^2_{H^1(\Omega)} \lesssim \|\bar{\rho}_g^0\|^2_{H^1(\Omega)}.
\]

Finally,  for arbitrary $\beta \in C^1([0,\infty) \times [0,\infty))$ and $\varphi \in C^{\infty}_c(\Omega),$ we have:
\begin{align*}
\int_{\Omega} \beta(\tilde{\rho}_g^{(N),0},\tilde{f}_g^{(N),0}) \mathds{1}_{\Omega \setminus \tilde{\mathcal F}^{(N),0}} \varphi {\rm d}x
& = \sum_{k=1}^N \int_{B_k^0} \beta(\bar{\rho}_g(c_k^0),\bar{f}_g(c_k^0)) \varphi(x){\rm d}x\\
& = \sum_{k=1}^N 2R_k^0 \beta(\bar{\rho}_g(c_k^0),\bar{f}_g(c_k^0)) \varphi(c_k^0) + O(1/N) \|\partial_x \varphi\|_{L^{\infty}(\Omega)}\\
&= \dfrac{1}{N} \sum_{k=1}^N\dfrac{\beta(\bar{\rho}_g(c_k^0),\bar{f}_g(c_k^0))}{ \bar{f}_g^{0}(c_k^0)} \varphi(c_k^0) + O(1/N) \|\partial_x \varphi\|_{L^{\infty}(\Omega)}
\end{align*}
At this point, we remark that, by construction, we have that 
\[
\dfrac{1}{N} \sum_{k=1}^{N} \delta_{c_k^0} \rightharpoonup \bar{\alpha}_g^0 \bar{f}_g^0 \text{ in $\mathbb P(\Omega)$}.
\]
Since  $t \mapsto \beta(\bar{\rho}_g^0(t),\bar{f}_g^{0}(t))/\bar{f}_g^0(t)$ is continuous on $\bar{\Omega}$ we infer that:
\[
\lim_{N\to \infty} \int_{\Omega} \beta(\tilde{\rho}_g^{(N),0},\tilde{f}_g^{(N),0}) \mathds{1}_{\Omega \setminus \tilde{\mathcal F}^{(N),0}} \varphi 
{\rm d}x= \int_{\Omega} \beta(\bar{\rho}_g,\bar{f}_g^0) \bar{\alpha}_g^0 \varphi {\rm d}x.
\]
This concludes the proof.
\end{proof}

Below, we pick a sequence of initial bubble distribution $(c_k^{(N),0},R_k^{(N),0})_{k=1,\ldots,N}$ and  masses $(m_k^{(N)})_{k=1,\ldots,N}$ given by {\bf Proposition \ref{prop_cstr_id}}.  For any  $N\in \mathbb N,$ assuming the fluid initial data is associated with $\bar{\rho}_f^0,\bar{u}^0,$ we construct initial data for the microscopic system like in \eqref{eq_initbubble}-\eqref{eq_initbubble2}.  We have then that the initial data match the assumptions of {\bf Theorem \ref{thm_existence}} and }
we obtain a solution 
\[
  (\rho_f^{(N)}, u_f^{(N)},
  (c_k^{(N)},R_k^{(N)})_{k\in \{1,\ldots,N\}})
\]
that is defined on a time-span $[0,T]$ which does not depend on $N.$
This creates a sequence of solutions indexed by 
$N$ {whose asymptotic behavior (when $N \to \infty$)} is analyzed in the remaining sections.  

\medskip

Firstly, {\bf Corollary \ref{cor_existence}} entails that we have 
uniform bounds on $[0,T]$ in the form of
\eqref{it_bound_energy}-\eqref{eq_Q6} with a right-hand side $E_0$
independent of $N$, and that \ref{it_bound_R}--\ref{it_H1_bound_tensors}  hold also with a constant $K$ independent of $N$.
In passing, we point out that all the bounds that are derived in Appendix
\ref{app_proof-prop-refpr} and
Appendix \ref{sec_rhoH1} are available since they are obtained under
the sole assumptions that initial data are of the form
\eqref{eq_initbubble}--\eqref{eq_initbubble2} and that the bounds
\ref{it_bound_R}--\ref{it_H1_bound_tensors} hold true. 
Below we denote $\tilde u^{(N)}$ the "mixture" velocity-field meaning that\begin{equation}
  \label{eq_tilde_udfN}
  \tilde u^{(N)} =
  \begin{cases}
    u_f^{(N)},& \text{on } \mathcal F^{(N)},\\
    \dfrac{u_f(x_k^{-,(N)}) + u_f(x_k^{+,(N)})}{2}\\
    \qquad -\dfrac{u_f(x_k^{-,(N)}) -u_f(x_k^{+,(N)})
    }{2R_k^{(N)}}(x-c_k), & \text{on } B_k^{(N)},\; k=1,\dots,N .
  \end{cases}
\end{equation}
Note that the restriction of $\tilde u^{(N)}$ on the bubbles
boils down to
\begin{equation}
  \label{eq_tilde_udfN_PkN}
  \tilde u^{(N)}(\cdot,x) = \dot c_k^{(N)} + \dfrac{\dot
    R_k^{(N)}}{R_k^{(N)}}(x-c_k^{(N)}) \quad \text{on } B_k^{(N)} .
\end{equation}

In what remains of this section, we introduce functions describing the different species and the mixture and we analyse their possible convergences.  Since we use mostly compactness argument below,
all convergence results must be understood "up to the extraction of a subsequence that we do not relabel."  

\subsection{Fluid unknowns}
\label{sec_fluid-unknowns}
In \eqref{eq_macromodel}, the fluid behavior is encoded through its
"volumic fraction" $\bar{\alpha}_f$
and its density $\bar{\rho}_f$.
We recover such quantities from microscopic counterparts.
{We start with the following construction of the volumic fraction:}
\begin{Prop}
    \label{prop_transport_chi}
  Let  $\chi^{(N)}=\mathbb 1_{\mathcal
    F^{(N)}}$. It satisfies
    \begin{equation}
    \label{eq_transport_chi}
    \begin{cases}
      \p_t \chi^{(N)} + \tilde u^{(N)}\p_x \chi^{(N)}=0,& \text{on }
      (0,T)\times \Omega,\\
      \chi^{(N)}(0,.) = \mathbb 1_{\mathcal F^{(N),0}}.
    \end{cases}
  \end{equation}
  Moreover, there exists $\balpha_f\in L^\infty((0,T)\times \Omega)$,
  called the \emph{volumic fraction of the fluid}, such that, up to the
  extraction of a subsequence,
  \begin{equation} \label{eq_alphasup}
    \chi^{(N)} \rightharpoonup\bar \alpha_f \text{ in } L^\infty((0,T)\times
    \Omega)-w^* \quad \text{and} \quad 0 \leq \bar \alpha_f \leq 1-2 d_{\infty}^2/3 \ a.e.
  \end{equation}  
\end{Prop}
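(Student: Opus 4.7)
\medskip

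\textbf{Plan for the transport equation.} The starting observation is that $\chi^{(N)}(t,\cdot)$ is the indicator of the moving fluid domain $\mathcal{F}^{(N)}(t)$, whose boundary consists of the interface points $x_k^{\pm}(t)=c_k^{(N)}(t)\pm R_k^{(N)}(t)$. Its distributional derivatives are therefore the measures
\[
\p_x\chi^{(N)}=\sum_{k=1}^N\bigl(-\delta_{x_k^-}+\delta_{x_k^+}\bigr),\qquad \p_t\chi^{(N)}=\sum_{k=1}^N\bigl(\dot{x}_k^-\delta_{x_k^-}-\dot{x}_k^+\delta_{x_k^+}\bigr).
\]
By the explicit construction \eqref{eq_tilde_udfN}, the extension $\tilde u^{(N)}$ is continuous across each interface with value $u_f(x_k^{\pm})$, and the kinematic matching \eqref{eq_continuity_velocity} forces $u_f(x_k^{\pm})=\dot c_k\pm\dot R_k=\dot x_k^{\pm}$. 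Consequently the product $\tilde u^{(N)}\p_x\chi^{(N)}$ is well-defined as a measure, and cancels $\p_t\chi^{(N)}$ term-by-term on each Dirac; the initial condition is immediate.

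\medskip

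\textbf{Plan for weak-$*$ compactness and the upper bound.} The bound $0\leq\chi^{(N)}\leq 1$ and Banach-Alaoglu yield a subsequence converging weakly-$*$ to some $\bar\alpha_f\in L^\infty((0,T)\times\Omega)$ with $0\leq\bar\alpha_f\leq 1$ a.e. To sharpen the constant, I would exploit the uniform geometric bounds~\ref{it_bound_R}-\ref{it_bound_F} through a local volume count. For any $t\in[0,T]$ and any sub-interval $J\subset\Omega$ whose endpoints lie in fluid segments and which contains the consecutive bubbles $B_{k_1},\dots,B_{k_2}$ entirely, we have, setting $M=k_2-k_1+1$,
\[
|J\cap\mathcal F^{(N)}(t)|\leq |J|-\sum_{k=k_1}^{k_2}2R_k^{(N)},\quad |J|\leq\frac{3M}{d_\infty N},\quad\sum_{k=k_1}^{k_2}2R_k^{(N)}\geq\frac{2Md_\infty}{N},
\]
the middle inequality following from adding $M$ bubble lengths ($\leq 2/(d_\infty N)$ each) and $M+1$ fluid-cell lengths ($\leq 1/(d_\infty N)$ each), up to boundary corrections negligible for $M$ large. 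This gives $|J\cap\mathcal F^{(N)}(t)|/|J|\leq 1-2d_\infty^2/3$. Testing against $\mathbf{1}_{(t_1,t_2)\times J}$, passing to the weak-$*$ limit, then shrinking $J$ to a Lebesgue point of $\bar\alpha_f$ (with $N$-dependent mesh that is eventually much finer than $J$) produces the claimed a.e.\ bound in \eqref{eq_alphasup}.

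\medskip

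\textbf{Anticipated difficulty.} The transport identity is essentially kinematic and presents no trouble once the continuity of $\tilde u^{(N)}$ across interfaces is observed. The main obstacle is the geometric averaging that produces the constant $2d_\infty^2/3$: one has to ensure that \emph{arbitrarily small} intervals $J$ admit the required bubble count, which is only available once $N$ is large enough that the bubble mesh $\sim 1/N$ is much finer than $|J|$. Hence the argument is necessarily asymptotic and requires an intermediate step in which one first integrates in $N$ against fixed test sets and only afterwards localises. A secondary technical point is handling the endpoints of $J$ when they fall inside a bubble; this is resolved by restricting the family of test intervals to those with endpoints inside fluid cells, which is still a sufficiently rich family to identify $\bar\alpha_f$ almost everywhere.
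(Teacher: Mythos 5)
Your proof is correct and takes essentially the same approach as the paper: the transport equation follows from the kinematic matching at interfaces (you make the distributional computation explicit, while the paper merely cites that $\mathcal F^{(N)}$ is transported by $\tilde u^{(N)}$), and the refined upper bound on $\bar\alpha_f$ comes from the same local volume count using \ref{it_bound_R}--\ref{it_bound_F}, producing the identical constant $2d_\infty^2/3$.

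One correction to your last paragraph is warranted. The proposed resolution of the endpoint issue --- ``restricting the family of test intervals to those with endpoints inside fluid cells'' --- does not work as stated, because the fluid cells are $N$-dependent whereas the weak-$*$ limit must be tested against \emph{fixed} sets. The correct resolution, which your main computation already contains implicitly in the phrase ``up to boundary corrections negligible,'' is the paper's: fix an interval $J$ of length $\ell$, accept that its endpoints may fall inside bubbles, and observe that since each consecutive bubble--fluid pair has length at most $3/(d_\infty N)$, the interval contains at least $\ell N d_\infty/3 - 2$ complete such pairs, each contributing gas volume at least $2d_\infty/N$. Thus $|J\cap\mathcal F^{(N)}(t)|\le \ell(1-2d_\infty^2/3)+O(1/N)$, the boundary error vanishes when $N\to\infty$, and only afterwards does one shrink $J$ to a Lebesgue point. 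There is no need to adapt $J$ to the $N$-mesh.
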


\begin{proof}
  Since the fluid domain $\mathcal F^{(N)}$ is transported by the
  velocity field $\tilde u^{(N)}$, \eqref{eq_transport_chi} holds. The
  convergence result is straightforward since the sequence
  $\chi^{(N)}$ is nonnegative and bounded in
  $L^\infty((0,T)\times \Omega)$.  The limit is obviosuly positive. {The only crucial information is the bound from above. For this, we remark that 
  under \ref{it_bound_R}-\ref{it_bound_F},  any sequence of two 
  bubble+fluid intervals has at most length $3 /(d_{\infty}N).$ 
  Hence,  for large $N$,  any segment in $\Omega$ of length $\ell$ contains at least $\ell N d_{\infty}/3 -2$ such sequences in which the volumic proportion of 
  gas-bubbles is at least $2\ell d_{\infty}^2/3+ O(1/N). $ The fluid part of this segment is then asymptotically less than $\ell(1 - 2d_{\infty}^2/3 ).$  
  }
\end{proof}


We point out that a strictly bound from below for $\bar{\alpha}_f$ is also true with similar arguments. We dot not state this bound here since it will not help in the sequel.
For {constructing the macroscopic density, we choose  to extend at first the microscopic density by "filling"} the bubbles in a
sufficiently smooth manner.
To this end, we take advantage
of the fact that $\rho_f^0$ is initially defined (and sufficiently
regular) on the whole $\Omega$.
So, we introduce  $\tilde \rho_f^{(N)}$
as the unique solution to:
\begin{equation}
  \label{eq_trhofN_cauchy}
  \begin{cases}
    \p_t \tilde \rho_f^{(N)} + \tilde u^{(N)} \p_x \tilde \rho_f^{(N)} =
    -\dfrac{\tilde\rho_f^{(N)}}{\mu_f}\left( \tilde \Sigma_f^{(N)}+
      \mathrm{p}_f(\tilde \rho_f^{(N)})\right),& \text{on }(0,T)\times
    \Omega , \\[14pt]
    \tilde \rho_f^{(N)}(0,.)=\bar \rho_f^{0},& \text{on }
    \Omega , 
  \end{cases}
\end{equation}
where $\tSigma_f^{(N)}$ is defined from $\Sigma_f$ by~\eqref{eq_tSigmaf}.
\begin{Prop}
  \label{prop_tilde_rho_fN}
  There  exists a time $T_0<T$, independent of $N$, such that the Cauchy
  problem \eqref{eq_trhofN_cauchy} admits a unique solution
  $\tilde \rho_f^{(N)}\in C([0,T_0]\times \Omega)$. \\
  Moreover, there exists $\brho_f\in L^2((0,{T_0})\times\Omega)$ called the {\em density 
    of the fluid} such that, up to the extraction of a subsequence,
  \[
    \tilde \rho_f^{(N)} \longrightarrow \brho_f \quad \text{in
    }L^2((0,{T_0})\times\Omega) \quad \text{when} \quad
    N\to+\infty.
  \]
\end{Prop}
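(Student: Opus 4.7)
The plan is to combine the method of characteristics with energy-type estimates on the derivative in order to obtain strong compactness. To begin, I observe that $\tilde u^{(N)}$ is Lipschitz in $x$ with constant $\|\partial_x \tilde u^{(N)}(t)\|_{L^\infty(\Omega)}$ uniformly bounded in $L^2(0,T)$: on the fluid part this follows from Proposition \ref{cor_Linf_dxuf} together with the uniform-in-$N$ bound on $\tilde \Sigma_f^{(N)}$ in $L^2(0,T;H^1(\Omega))$ from \ref{it_H1_bound_tensors} and the embedding $H^1(\Omega)\hookrightarrow L^\infty(\Omega),$ while on each bubble $\partial_x \tilde u^{(N)}=\dot R_k^{(N)}/R_k^{(N)}$, which is controlled uniformly in $k$ by Corollary \ref{cor_est_sigmag} (the term $\kappa_k/R_k$ is $O(1)$ by \ref{it_bound_mk0}--\ref{it_bound_R0}). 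Hence the characteristics $X^{(N)}(t,x_0)$ solving $\partial_t X = \tilde u^{(N)}(t,X)$, $X(0,x_0)=x_0$, are globally defined on $[0,T]$ and induce a bi-Lipschitz diffeomorphism of $\Omega$ (using $\tilde u^{(N)}(\cdot,\pm1)=0$).

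Setting $Y^{(N)}(t,x_0)=\tilde\rho_f^{(N)}(t,X^{(N)}(t,x_0))$, equation \eqref{eq_trhofN_cauchy} reduces to the scalar ODE
\begin{equation*}
  \dot Y^{(N)} = -\frac{Y^{(N)}}{\mu_f}\bigl(\tilde\Sigma_f^{(N)}(t,X^{(N)})+\mathrm{p}_f(Y^{(N)})\bigr),\qquad Y^{(N)}(0,x_0)=\brho_f^0(x_0).
\end{equation*}
Because $\mathrm{p}_f(Y)\ge 0$, $\dot Y^{(N)}\le Y^{(N)}\,|\tilde\Sigma_f^{(N)}|/\mu_f$, so $Y^{(N)}(t)\le \|\brho_f^0\|_{L^\infty}\exp\bigl(\mu_f^{-1}\int_0^t\|\tilde\Sigma_f^{(N)}\|_{L^\infty}\,ds\bigr)$; by Cauchy--Schwarz and \ref{it_H1_bound_tensors} the exponent is at most $C\sqrt{T_0}$. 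Choosing $T_0$ small enough gives a uniform upper bound $Y^{(N)}\le C_1$; then $\mathrm{p}_f(Y)/Y=\kappa_f Y^{\gamma_f-1}\le \kappa_f C_1^{\gamma_f-1}$ and a symmetric Gronwall argument, using $\brho_f^0\ge\rho_{\min}$ from \eqref{eq_cond0}, produces a uniform lower bound $Y^{(N)}\ge c_1>0$. This yields existence, uniqueness and continuity of $\tilde\rho_f^{(N)}$ on $[0,T_0]\times\Omega$ with uniform two-sided bounds.

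To obtain compactness, I propagate the $H^1$ regularity of $\brho_f^0$. Differentiating \eqref{eq_trhofN_cauchy} in $x$, multiplying by $\partial_x \tilde\rho_f^{(N)}$, integrating over $\Omega$ and integrating by parts the transport term (boundary terms vanish), I get
\begin{equation*}
\frac{1}{2}\frac{d}{dt}\|\partial_x\tilde\rho_f^{(N)}\|_{L^2}^2
\le C\bigl(1+\|\partial_x \tilde u^{(N)}\|_{L^\infty}+\|\tilde\Sigma_f^{(N)}\|_{L^\infty}\bigr)\|\partial_x\tilde\rho_f^{(N)}\|_{L^2}^2
+C\|\partial_x\tilde\Sigma_f^{(N)}\|_{L^2}\|\partial_x\tilde\rho_f^{(N)}\|_{L^2},
\end{equation*}
where all coefficients are uniformly bounded in $L^2(0,T_0)$ by the discussion above and \ref{it_H1_bound_tensors}. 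Gronwall's lemma gives $\tilde\rho_f^{(N)}$ bounded in $L^\infty(0,T_0;H^1(\Omega))$. Returning to the PDE, $\partial_t\tilde\rho_f^{(N)}$ is then bounded in $L^2((0,T_0)\times\Omega)$. The Aubin--Lions lemma applied with the compact embedding $H^1(\Omega)\Subset L^2(\Omega)$ yields the desired strong convergence along a subsequence.

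The main obstacle is controlling $\partial_x\tilde u^{(N)}$ uniformly in $N$ on the bubbles, where it equals $\dot R_k^{(N)}/R_k^{(N)}$ and has no obvious bound a priori. The rescue is Corollary \ref{cor_est_sigmag}, which bounds $\max_k|\mu_g\dot R_k/R_k - \kappa_k/R_k|$ by $\|\tilde\Sigma_f^{(N)}\|_{H^1(\Omega)}$ and the mass-weighted accelerations, quantities already controlled by \ref{it_H1_bound_tensors}; this, together with the scaling \eqref{eq_scaling}, is precisely what makes both the characteristic framework and the energy estimate for $\partial_x\tilde\rho_f^{(N)}$ robust under $N\to\infty$.
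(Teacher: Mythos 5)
Your proposal is correct and follows essentially the same route as the paper: method of characteristics for well-posedness (using that $\tilde u^{(N)}\in L^2(0,T;W^{1,\infty}(\Omega))$ uniformly in $N$, which is exactly the combination of Proposition \ref{cor_Linf_dxuf} on $\mathcal F$ and Corollary \ref{cor_est_sigmag} on the bubbles that you cite), then Aubin--Lions compactness via a uniform $L^\infty(0,T_0;H^1(\Omega))$ bound obtained by differentiating \eqref{eq_trhofN_cauchy} in $x$ and running a Gronwall argument, plus an $L^2$ bound on $\partial_t\tilde\rho_f^{(N)}$ read off directly from the equation. The only structural difference is presentational: the paper delegates the $H^1$ propagation to Proposition \ref{prop_rhoH1} in Appendix \ref{sec_rhoH1} (which carries out exactly your $L^\infty$-bound-by-characteristics followed by the $Y=\partial_x\tilde\rho_f$ energy estimate), whereas you do it inline; and you additionally spell out the lower bound on $\tilde\rho_f^{(N)}$, which is not needed for the compactness statement itself but is harmless.
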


\begin{proof}
  The well-posedness of the Cauchy problem~\eqref{eq_trhofN_cauchy} is
  guaranteed by the method of characteristics, since $\tu^{(N)}$
  belongs to $L^2((0,T);W^{1,\infty}(\Omega))$. 
  
  The result of convergence is an application of the Aubin--Lions
  lemma. One has to check that:
  \begin{itemize}
  \item $(\tilde \rho_f^{(N)})_N $ bounded in $L^2((0,T);H^1(\Omega))$,
  \item $(\dv_t\tilde \rho_f^{(N)})_N$ bounded in $L^2((0,T);L^2(\Omega))$.
  \end{itemize}
  For the first item,  we apply {\bf
    Proposition \ref{prop_rhoH1}} in Appendix {\ref{sec_rhoH1}}
  which yields that, up to restrict to some time-interval $[0,T_0]
  \subset [0,T]$ we have that $\tilde{\rho}_f^{(N)}$
  satisfies a uniform bound in $L^{\infty}((0,T) ; H^1(\Omega)).$ As
  for the second item,  using directly
  Equation~\eqref{eq_trhofN_cauchy}, a uniform estimate can be
  obtained: 
  \begin{align*}
    &\|\partial_t \tilde{\rho}_f^{(N)}\|_{L^{2}((0,T) \times \Omega)}
      \leq  C_0 \Big[\|\tilde{u}^{(N)}\|_{L^2((0,T) ; H^1(\Omega) )} \|\partial_x
      \tilde{\rho}_f^{(N)}\|_{L^{\infty}((0,T) ; L^2((\Omega))} \\
        &\quad + \dfrac{\|\tilde{\rho}_f^{(N)}\|_{L^{\infty}((0,T) \times \Omega)}}{\mu_f}
      \left(  \|\tilde{\Sigma}_f^{(N)}\|_{L^2((0,T); H^1(\Omega))} + 
          \sqrt{T} \|\tilde{\rho}_f^{(N)}\|^{\gamma}_{L^{\infty}((0,T)
          \times \Omega)}  \right) \Big] ,
  \end{align*}
  where $C_0$ depends only on the parameters of the problem independent of $N$.
  Here again, the right-hand side is uniformly bounded with respect to
  $N$, so that the Aubin--Lions lemma can be applied to deduce the
  existence of the limit $\bar\rho_f$ stated in the proposition.
\end{proof}
 

To illustrate  again that our choice for $\tilde{\rho}_f^{(N)}$ is rigorously adapted,
we mention that, on the fluid domain $\mathcal F^{(N)}$, the definition of the
fluid tensor \eqref{eq_fluid_tensor_sigmaf} gives 
\[
  \dfrac{1}{\mu_f}\left( \tilde \Sigma_f^{(N)}+
    \mathrm{p}_f(\tilde \rho_f^{(N)})\right) = \dv_x u_f^{(N)}.
\]
Moreover, $u_f^{(N)}$ and $\tu^{(N)}$ coincide on
$\mathcal{F}^{(N)}$, and the density $\tilde \rho_f^{(N)}$ is also
solution of
\begin{equation}
  \label{eq_trhofN_cauchy2}
  \begin{cases}
    \p_t \tilde \rho_f^{(N)} + \p_x (\tilde \rho_f^{(N)}  u_f^{(N)})=0
    ,& \text{on }(0,T)\times \mathcal F^{N},\\[8pt]
    \tilde \rho_f^{(N)}(0,.)=\bar \rho_f^{0}.
  \end{cases}
\end{equation}    
As a consequence, the fluid density $\rho_f^{(N)}$ on the fluid domain
$\mathcal F^{(N)}$ is the restriction of the global microscopic
density $\tilde \rho_f^{(N)}$:
\begin{equation}
  \label{eq_trhofN5}
  \rho_f^{(N)} = \tilde\rho_f^{(N)}, \quad \text{on } (0,T)\times
  \mathcal F^{(N)}.
\end{equation}

\subsection{Mixture unknowns}
\label{sec_mixt-unknowns}
We proceed with the construction of unknowns that are involved in
composite equations: a mixture velocity, a mixture density and a
mixture stress tensor.

\medskip

The mixture velocity is deduced from the reconstructed velocity
$\tu^{(N)}$ defined by~\eqref{eq_tilde_udfN}:

\begin{Prop}
  \label{prop_CV_uf}
  There exists $\bu\in L^2((0,T); L^2(\Omega))$ such that, up to the
  extraction of a subsequence,
  \begin{equation*}
    \tu^{(N)} \to \bu \text{ in } L^2((0,T); L^2(\Omega)) \text{ when
    } N\to+\infty.
\end{equation*}
\end{Prop}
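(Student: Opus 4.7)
The plan is to apply the Aubin--Lions compactness lemma to the sequence $(\tu^{(N)})_N$ with the Gelfand triple $H^1_0(\Omega) \hookrightarrow L^2(\Omega) \hookrightarrow H^{-1}(\Omega)$, whose first embedding is compact in one space dimension. This requires two uniform bounds: $\tu^{(N)}$ in $L^\infty(0,T;H^1_0(\Omega))$ and $\dv_t \tu^{(N)}$ in $L^2(0,T;H^{-1}(\Omega))$.

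First, I would observe that $\tu^{(N)}$ is continuous across each interface $x_k^{\pm,(N)}$ by the velocity matching condition \eqref{eq_continuity_velocity} and vanishes at $\pm 1$ by the no-slip condition \eqref{eq_BC_uf}, hence $\tu^{(N)}(t,\cdot)\in H^1_0(\Omega)$. On the fluid part, $\dv_x \tu^{(N)} = \dv_x u_f^{(N)}$ is bounded in $L^\infty(0,T;L^2(\mathcal F^{(N)}))$ thanks to \ref{it_bound_H1}. On each bubble, the explicit form \eqref{eq_tilde_udfN_PkN} yields
\[
  \int_{B_k^{(N)}} |\dv_x \tu^{(N)}|^2 \, dx = 2\frac{|\dot R_k^{(N)}|^2}{R_k^{(N)}},
\]
so that the total bubble contribution $2\sum_k |\dot R_k^{(N)}|^2/R_k^{(N)}$ is again controlled by \ref{it_bound_H1} uniformly in $N$, providing the first bound.

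Second, for $\varphi \in H^1_0(\Omega)$ I would split $\int_\Omega \dv_t \tu^{(N)}\varphi\,dx$ into contributions over the fluid domain $\mathcal F^{(N)}(t)$ and the bubbles $B_k^{(N)}(t)$; this decomposition is valid in the distributional sense since $\tu^{(N)}$ is continuous across the moving interfaces. On the fluid part, the momentum equation \eqref{eq_fluid_NS_1D} expresses $\dv_t u_f^{(N)}$ in terms of $(\dv_x \tilde\Sigma_f^{(N)})/\rho_f^{(N)}$ and $u_f^{(N)}\dv_x u_f^{(N)}$; combining the $L^2(0,T;H^1(\Omega))$-bound on $\tilde\Sigma_f^{(N)}$ from \ref{it_H1_bound_tensors} with the lower bound on $\rho_f^{(N)}$ from \ref{it_bound_rho} and the $L^\infty_t H^1_x$-bound on $u_f^{(N)}$ controls this contribution in $L^2((0,T);L^2(\Omega))\hookrightarrow L^2((0,T);H^{-1}(\Omega))$. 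On each bubble, direct differentiation of \eqref{eq_tilde_udfN_PkN} gives
\[
  \dv_t \tu^{(N)} = \ddot c_k^{(N)} - \frac{\dot c_k^{(N)} \dot R_k^{(N)}}{R_k^{(N)}} + \bigg(\frac{\ddot R_k^{(N)}}{R_k^{(N)}} - \frac{|\dot R_k^{(N)}|^2}{|R_k^{(N)}|^2}\bigg)(x - c_k^{(N)}).
\]
The two ``constant'' (in $x$) terms are controlled, after summing and applying Cauchy--Schwarz, using $\int_0^T \sum_k m_k^{(N)}|\ddot c_k^{(N)}|^2 \leq K$ from \ref{it_H1_bound_tensors} together with the uniform $\ell^\infty$-estimate $|\dot c_k^{(N)}|\leq \|u_f^{(N)}\|_{L^\infty(\Omega)}$ stemming from \eqref{eq_continuity_velocity} and the Sobolev embedding. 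The ``linear'' term is handled by replacing $\varphi$ with $\varphi - \overline{\varphi}_{B_k^{(N)}}$ (since $\int_{B_k^{(N)}}(x-c_k^{(N)})\,dx = 0$) and invoking Poincar\'e--Wirtinger on $B_k^{(N)}$, which gains a factor $R_k^{(N)}$ in the estimate that compensates the singular prefactors $1/R_k^{(N)}$ and $1/(R_k^{(N)})^2$.

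The \textbf{main obstacle} is the careful bookkeeping of powers of $R_k^{(N)} \sim 1/N$ in the bubble contributions to $\dv_t \tu^{(N)}$, namely verifying that each singular prefactor is absorbed by a compensating factor arising either from $|B_k^{(N)}| \sim 1/N$ upon integration against the test function or from Poincar\'e--Wirtinger on $B_k^{(N)}$, using the \emph{a priori} bounds \ref{it_bound_R}--\ref{it_H1_bound_tensors} from \textbf{Corollary \ref{cor_existence}} together with the energy inequality \eqref{eq_estim_nrj}. Once both Aubin--Lions hypotheses are checked, extraction of a subsequence provides the desired strong limit $\bu \in L^2((0,T);L^2(\Omega))$.
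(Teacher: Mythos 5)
Your proposal is correct and shares the same overall strategy as the paper (Aubin--Lions with the fluid/bubble decomposition and the uniform bounds \ref{it_bound_R}--\ref{it_H1_bound_tensors}), but the execution of the time-derivative bound is genuinely different. The paper establishes the stronger bound $\partial_t \tu^{(N)}\in L^2((0,T);L^2(\Omega))$ by computing the $L^2(\Omega)$-norm of $\partial_t \tu^{(N)}$ directly on each piece; on the bubbles this produces the terms $R_k|\ddot c_k|^2$, $R_k|\ddot R_k|^2$, $|\dot R_k|^4/R_k$, $|\dot c_k\dot R_k|^2/R_k$, and the quartic term forces the paper to invoke \textbf{Lemma \ref{lem_RkpsRk}} (the $L^2_t$-bound on $\max_k|\dot R_k/R_k|^2$). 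You instead target the weaker space $L^2((0,T);H^{-1}(\Omega))$, which is still sufficient for Aubin--Lions, and exploit the orthogonality $\int_{B_k}(x-c_k)\,dx=0$ together with Poincar\'e--Wirtinger to gain a factor $R_k^{5/2}\|\partial_x\varphi\|_{L^2(B_k)}$ on the linear (in $x$) bubble contribution. As a result, the singular prefactors $1/R_k$ and $1/R_k^2$ are absorbed by powers of $R_k\sim 1/N$ alone, and the quartic term $|\dot R_k|^4/R_k$ that forces the paper's use of Lemma \ref{lem_RkpsRk} never appears: you can close with $\sup_k|\dot R_k|\lesssim\|u_f\|_{L^\infty}$, the $L^\infty_t$-bound $\sum_k|\dot R_k|^2/R_k\lesssim K$ from \ref{it_bound_H1}, and $\int_0^T\sum_k m_k(|\ddot c_k|^2+|\ddot R_k|^2)\leq K$ from \ref{it_H1_bound_tensors}. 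In short, the paper's direct $L^2$ computation is shorter to state but costs the auxiliary Lemma \ref{lem_RkpsRk}, while your $H^{-1}$ route trades a weaker conclusion for slightly more elementary ingredients, at the price of the Poincar\'e--Wirtinger bookkeeping you correctly identify as the main obstacle.

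One minor remark: the paper cites only the dissipation bounds \eqref{it_bound_energy} and \eqref{eq_Q6} to get $\tu^{(N)}$ bounded in $L^2_tH^1_x$, whereas you invoke \ref{it_bound_H1} to get the stronger $L^\infty_tH^1_{0,x}$ bound; both suffice, and the stronger bound is what lets your $H^{-1}$ argument avoid Lemma \ref{lem_RkpsRk}, so the choice is coherent with the rest of your strategy.
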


\begin{proof}
  This result is an application of the Aubin--Lions
  lemma again. From~\eqref{it_bound_energy} and \eqref{eq_Q6}, the sequence
  $(\tilde u^{(N)})$ is bounded in $L^2((0,T);H^1(\Omega))$. It
  remains to prove {\color{red} a uniform bound for } 
  $(\dv_t\tilde u^{(N)})_N$ in $L^2((0,T);L^2(\Omega))$. By
  \eqref{eq_fluid_NS_1D} and \eqref{eq_tilde_udfN_PkN}, the time
  derivative of the velocity reads:
  \begin{equation*}
    \dv_t\tilde u^{(N)} =
    \begin{cases}
      \displaystyle - u_f^{(N)} \dv_x u_f^{(N)} -  \frac{1}{\rho_f^{(N)}}
      \dv_x \Sigma_f^{(N)} & \text{on }
      \mathcal{F}, \\
      \displaystyle \ddot{c}_k +
      \frac{\ddot{R}_kR_k-(\dot{R}_k)^2}{R_k^2}(x-c_k) -
      \dot{c}_k\frac{\dot{R}_k}{R_k} & \text{on } B_k,
    \end{cases}
  \end{equation*}
  (note that some exponents ${(N)}$ have been removed to lighten the
  notations). Since the velocity $\tilde u^{(N)}$ is continuous
  through the interfaces $c_k\pm R_k$, one has, in
  $\mathcal{D}'((0,T)\times\Omega)$,
  \begin{multline*}
    \dv_t \tilde u^{(N)} = \bigg( - u_f^{(N)} \dv_x u_f^{(N)} -
    \frac{1}{\rho_f^{(N)}}
    \dv_x \Sigma_f^{(N)} \bigg) \mathbb{1}_\mathcal{F} \\
    + \sum_{k=1}^{N} \bigg[ \ddot{c}_k + \bigg(
    \frac{\ddot{R}_k}{R_k}-\frac{(\dot{R}_k)^2}{R_k^2}\bigg) (x-c_k) -
    \dot{c}_k\frac{\dot{R}_k}{R_k} \bigg] \mathbb{1}_{B_k} .
  \end{multline*}
  We now take the $L^2$ norm:
  \begin{align*}
    \| \dv_t\tilde u^{(N)} \|_{L^2(\Omega)}^2
    &\leq  \| \tilde u^{(N)}\|_{L^\infty(\Omega)}^2
      \| \dv_x \tilde u^{(N)}\|_{L^2(\Omega)}^2
      + \frac{1}{|\underline{\rho}_\infty|^2} \|\dv_x \tilde \Sigma_f^{(N)}
      \|_{L^2(\Omega)}^2 \\
    &\quad + 2 \sum_{k=1}^N\bigg[ R_k (\ddot{c}_k)^2 +
      R_k (\ddot{R}_k)^2 + \frac{(\dot{R}_k)^4}{R_k} +
      \frac{(\dot{c}_k\dot{R}_k)^2}{R_k} \bigg] \\
    &\leq C \| \tilde u^{(N)}\|_{H^1(\Omega)}^4  +
    \frac{1}{|\underline{\rho}_\infty|^2} \|\tilde \Sigma_f^{(N)}
      \|_{H^1(\Omega)}^2 \\
    &\quad + 2 \frac{1}{d_\infty M_\infty} \sum_{k=1}^N
      m_k \big( (\ddot{c}_k)^2 + (\ddot{R}_k)^2 \big) + 2 \sum_{k=1}^N
        \frac{1}{R_k} \big( (\dot{R}_k)^4 +
      (\dot{c}_k\dot{R}_k)^2 \big)
  \end{align*}
  by \ref{it_bound_mk0} and \ref{it_bound_R}. 
  Time-integrals of the two first terms on the right-hand side are bounded by \ref{it_bound_H1}
   and \ref{it_H1_bound_tensors} respectively. The third is
  controlled using \ref{it_H1_bound_tensors}. Moreover, by
  \ref{it_bound_mk0}, \ref{it_bound_R}, and then by
  \eqref{it_bound_energy}, the last term can be bounded this way:
  \begin{align*}
    \int_0^T \sum_{k=1}^N \frac{(\dot{R}_k)^2}{R_k}& \big( (\dot{R}_k)^2
    + (\dot{c}_k)^2 \big) \mathrm{d}t \\
    &\leq \frac{1}{d_\infty M_\infty}\int_0^T  \max_{k=1,\dots,N}
      \frac{(\dot{R}_k)^2}{R_k^2}
      \sum_{k=1}^N m_k \big( (\dot{R}_k)^2 + (\dot{c}_k)^2 \big) \mathrm{d}t  \\
    &\leq \frac{2 E_0}{d_\infty M_\infty}\int_0^T  \max_{k=1,\dots,N}
      \frac{(\dot{R}_k)^2}{R_k^2} \mathrm{d}t .
  \end{align*}
  The last right-hand side is finally bounded by using Lemma \ref{lem_RkpsRk}. This concludes the
  proof of the assumptions of the Aubin--Lions lemma, leading to the
  convergence of the sequence $(\tilde u^{(N)})_N$ in $L^2((0,T);L^2(\Omega))$.
\end{proof}

We focus now on the mixture density. For this, we construct the global
density $\rho^{(N)}$:
\begin{equation}
  \label{eq_global_rhoN}
  \rho^{(N)} = \rho_f^{(N)} \mathbb 1_{\mathcal F^{(N)}} +
  \sum_{k=1}^N \rho^{(N)}_k \mathbb 1_{B_k},
\end{equation}
where $ \rho_k^{(N)} = m_k^{(N)}/(2R_k^{(N)})$ is the bubble density
that we reconstruct from the bubble mass and radius.
Notice that the global density $\rho^{(N)}$ belongs to
$L^\infty((0,T)\times \Omega)$, and satisfies a classical mass
conservation law (the proof is left to the reader):
  \begin{equation}
    \label{eq_conv_rhoN}
    \p_t \rho^{(N)} + \p_x (\rho^{(N)}\tu_f^{(N)})=0, \text{ in }
    (0,T)\times \Omega.
  \end{equation}

To conclude,  we 
address 
the asymptotic behavior of extended stresses. 
This is the content of the following proposition:
\begin{Prop}
  \label{prop_CV_Sigma}
  There exist $\tilde \Sigma_f$  and $\tilde \Sigma_g$ in
  $L^2((0,T);H^1(\Omega))$ such that, up to the extraction of a
  subsequence, 
  \begin{equation*}
    \begin{aligned}
      \tilde \Sigma_f^{(N)} &\rightharpoonup \bar \Sigma_f \\
      \tilde \Sigma_g^{(N)} &\rightharpoonup \bar \Sigma_g
    \end{aligned}  \quad  \text{ in
      } L^2((0,T); H^1(\Omega)) \text{ when } N\to+\infty.
  \end{equation*}
\end{Prop}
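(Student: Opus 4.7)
The plan is direct: invoke the uniform bounds already at our disposal, combine them with reflexivity of $L^2((0,T);H^1(\Omega))$, and conclude by the Banach--Alaoglu type extraction of a weakly convergent subsequence.

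First I would observe that \textbf{Corollary \ref{cor_existence}} guarantees the bound \ref{it_H1_bound_tensors} with a constant $K_\infty$ independent of $N$. Reading off the relevant piece of this inequality, we have
\begin{equation*}
\int_0^T \Big( \|\tilde \Sigma_f^{(N)}\|_{H^1(\Omega)}^2 + \|\tilde \Sigma_g^{(N)}\|_{H^1(\Omega)}^2 \Big)\, \mathrm{d}s \leq K_\infty ,
\end{equation*}
so the sequences $(\tilde \Sigma_f^{(N)})_N$ and $(\tilde \Sigma_g^{(N)})_N$ are uniformly bounded in $L^2((0,T);H^1(\Omega))$. I would remark here that the whole point of the extension constructions \eqref{eq_tSigmaf}--\eqref{eq_tSigmag} together with \textbf{Proposition \ref{prop_estima_sigmaTf}} and \textbf{Proposition \ref{prop_tSigmag}} was precisely to obtain such $N$-independent control, since the raw stresses $\Sigma_f^{(N)}$, $\Sigma_k^{(N)}$ are defined on a geometry depending on $N$.

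Then, since $L^2((0,T);H^1(\Omega))$ is a separable Hilbert space and thus reflexive, any bounded sequence admits a weakly convergent subsequence. Extracting such subsequences (without relabeling, as announced in the section's convention), we get $\bar \Sigma_f, \bar \Sigma_g \in L^2((0,T);H^1(\Omega))$ such that
\begin{equation*}
\tilde \Sigma_f^{(N)} \rightharpoonup \bar \Sigma_f , \qquad \tilde \Sigma_g^{(N)} \rightharpoonup \bar \Sigma_g \quad \text{in } L^2((0,T);H^1(\Omega)) .
\end{equation*}
The limits belong to the stated space by lower semicontinuity of the norm under weak convergence.

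There is essentially no obstacle here: the hard work was already done in Section~\ref{sec_estimates} to get the uniform $H^1$ estimates on the extended stresses in the first place. The statement is really a harvest of those estimates, and the only thing to emphasize in the writeup is the diagonal extraction so that the two weak limits are obtained along a common subsequence, consistent with the convention stated just before \textbf{Proposition \ref{prop_transport_chi}}.
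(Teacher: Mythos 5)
Your proof is correct and follows essentially the same route as the paper: uniform boundedness in $L^2((0,T);H^1(\Omega))$ from \ref{it_H1_bound_tensors} (as guaranteed by {\bf Corollary \ref{cor_existence}}), followed by weak compactness in that reflexive space. The paper's proof is a one-liner saying exactly this; your additional remarks on the role of the extensions and on diagonal extraction are accurate but add nothing that changes the argument.
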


\begin{proof}
    The estimate \ref{it_H1_bound_tensors} ensures that the sequences $\tilde
  \Sigma_f^{(N)}$ and $\tilde \Sigma_g^{(N)}$ are both bounded in the space $L^2((0,T);
      H^1(\Omega))$. Hence they are relatively compact in $L^2((0,T);
      H^1(\Omega))$ endowed with the weak topology, and the result
      follows. 
\end{proof}

\subsection{Bubble unknowns}
\label{sec_bubb-unknowns}
{We mention first that the indicator of the bubble domains reads $1 - \chi^{(N)}.$ 
Similarly to {Proposition \ref{prop_transport_chi}} we obtain that it converges weakly to some $\bar{\alpha}_g$ satisfying also $0 \leq \bar{\alpha}_g \leq 1$ a.e..  Since $1  =\bar{\alpha}_g + \bar{\alpha}_f,$ {Proposition \ref{prop_transport_chi}} entails further that $\bar{\alpha}_g \geq 2 d_{\infty}^2/3.$

For our analysis, we need a sufficiently strong (pointwise) convergence of bubble density $\rho_g^{(N)}$ and covolume $f_g^{(N)}$ as defined in \eqref{eq_def_dens&cv}.  
Yet,  these quantities are defined only partially on subsets depending on $N.$
To overcome this difficulty,  we note that both quantities satisfy the same continuity equation:
\begin{equation} \label{eq_transportgaz}
\left\{
\begin{aligned}
&  \partial_t \rho_g^{(N)} + \partial_x (\rho_g^{(N)} \tilde{u}^{(N)}) = 0, \\
&  \dv_t f_g^{(N)} + \dv_x (f_g^{(N)} \tilde u^{(N)}) = 0 , 
\end{aligned}
\right.
  \quad \text{ in $\mathcal D'((0,T) \times \Omega)$.}
\end{equation}
We used here in particular that $m_k^{(N)}$  is time-independent and that the bubbles
follow the flow associated with the extended velocity.  We propose then to reproduce the same method we used in the case of fluid unknowns (see Proposition \ref{prop_tilde_rho_fN}). We remark that, on $B_k,$ there holds:
\begin{equation}
  \label{eq_dxu}
  \partial_x \tilde{u}^{(N)} = \dfrac{1}{\mu_g}\Sigma_{k}^{(N)} +  \dfrac{{\kappa_k}}{R_k^{(N)}}. 
\end{equation}
We recall that, on the right-hand side, the first term is the restriction to $B_k$ of the 
extended stress tensor $\tSigma_{g}^{(N)}.$ As for the last term, we wish to extract the
contribution of the pressure and the contribution of the surface
tension for modelling reason (even though keeping the current form would not change the
remark in progress). So we rewrite:
\[
  \dfrac{\kappa_k}{R_k^{(N)}} = {\rm p}_g(\rho_k^{(N)}) +
  \dfrac{\bar{\gamma}_S}{2 N R_k^{(N)}}.
\]
Here, the second term could be related artificially to a density, but it is
usually related to a "covolume" and is treated independently. Actually, this is the reason
motivating the introduction of the unknown $f_g^{(N)}.$ We use now this novel writing of the term $\partial_{x} \tilde{u}^{(N)}$ to see that $(\rho_g^{(N)},f_{g}^{(N)})$ is the restriction of a pair $(\tilde{\rho}_g^{(N)},\tilde{f}_g^{(N)})$ solution to:
\begin{equation} \label{eq_transportgasv2}
     \p_t
      \begin{pmatrix}
        \tilde{\rho}_g^{(N)} \\[2pt]  \tilde{f}_g^{(N)}
      \end{pmatrix}
      + \tilde u^{(N)} \p_x
            \begin{pmatrix}
        \tilde{\rho}_g^{(N)} \\[2pt] \tilde{f}_g^{(N)}
      \end{pmatrix} =
      -\dfrac{1}{\mu_g}      \begin{pmatrix}
       \tilde{\rho}_g^{(N)} \\[2pt]  \tilde{f}_g^{(N)}
      \end{pmatrix}
      \left( \tilde \Sigma_g^{(N)}+
        \mathrm{p}_g(\tilde{\rho}_g^{(N)}) + \bar\gamma_s \tilde{f}_g^{(N)} \right), \text{on }(0,T)\times
      \Omega .\\
\end{equation}
We can then use the stability properties of this latter equation to yield the following proposition:

\begin{Prop} 
  \label{prop_tilde_rhogN_fgN}
  There exists a time $T_0<T$ (independent of $N$) and sequences $(\tilde{\rho}_g^{(N)}, \tilde{f}_g^{(N)}) \in C([0,T_0];H^1(\Omega))$ satisfying the following properties:
  \begin{itemize}
  \item there holds $\rho_g^{(N)} = \tilde{\rho}_g^{(N)}$ and $f_g^{(N)} = \tilde{f}_g^{(N)}$
   on $B_k$ for all $k=1,\ldots,N,$
  \item there exists $(\brho_g,\bar f_g)\in L^2((0,{T_0})\times\Omega)^2$ such
  that, up to the extraction of a subsequence,
  \[
    (\tilde \rho_g^{(N)},\tilde f_g^{(N)}) \longrightarrow
    (\brho_g,\bar f_g) \quad \text{in
    }L^2((0,{T_0})\times\Omega)^2 \quad \text{when} \quad
    N\to+\infty.
  \]
  \end{itemize}
\end{Prop}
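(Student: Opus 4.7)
The plan is to mimic the proof of Proposition \ref{prop_tilde_rho_fN}, treating \eqref{eq_transportgasv2} as a coupled quasilinear transport system in $(\tilde{\rho}_g^{(N)}, \tilde{f}_g^{(N)})$ driven by $\tilde u^{(N)}$ and the source $\tilde \Sigma_g^{(N)}$, and starting from the regularized initial data $(\tilde{\rho}_g^{(N),0}, \tilde{f}_g^{(N),0})$ furnished by Proposition \ref{prop_cstr_id}. The argument proceeds in four steps: local existence, identification with the microscopic quantities on the bubbles, uniform $H^1$ estimates, and Aubin--Lions compactness.

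For local existence, \ref{it_bound_H1} together with the one-dimensional embedding $H^1(\Omega) \hookrightarrow L^\infty(\Omega)$ gives $\tilde u^{(N)} \in L^2((0,T); W^{1,\infty}(\Omega))$, so the characteristics of \eqref{eq_transportgasv2} are well-defined. Along each characteristic the system reduces to a planar ODE with polynomial nonlinearity in $(\tilde{\rho}_g, \tilde{f}_g)$, locally Lipschitz, yielding a unique continuous solution up to a possible blow-up time. For the identification on each $B_k^{(N)}$, observe that the discrete quantities $\rho_g^{(N)} = m_k^{(N)}/(2R_k^{(N)})$ and $f_g^{(N)} = 1/(2N R_k^{(N)})$ already satisfy \eqref{eq_transportgaz}, and by rewriting \eqref{eq_dxu} as
\[
\partial_x \tilde u^{(N)} = \frac{1}{\mu_g}\tilde \Sigma_g^{(N)} + \mathrm{p}_g(\rho_k^{(N)}) + \frac{\bar \gamma_s}{2 N R_k^{(N)}}
\]
on $B_k$, they verify \eqref{eq_transportgasv2} there. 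Since Proposition \ref{prop_cstr_id} was built precisely so that the initial extensions match the discrete values on $B_k^{(N),0}$, and since $\tilde u^{(N)}$ transports the bubble interfaces by \eqref{eq_tilde_udfN_PkN}, uniqueness along characteristics gives $\tilde{\rho}_g^{(N)} \equiv \rho_g^{(N)}$ and $\tilde{f}_g^{(N)} \equiv f_g^{(N)}$ on each $B_k^{(N)}$, as claimed.

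The heart of the argument is the uniform-in-$N$ propagation of an $L^\infty((0,T_0); H^1(\Omega))$ bound. Differentiating \eqref{eq_transportgasv2} in $x$ and testing against $(\partial_x \tilde{\rho}_g^{(N)}, \partial_x \tilde{f}_g^{(N)})$, we obtain a Grönwall-type inequality whose forcing involves $\|\partial_x \tilde u^{(N)}\|_{L^\infty}$, $\|\partial_x \tilde \Sigma_g^{(N)}\|_{L^2}$, and the $L^\infty$ norms of the unknowns themselves (controlled by $H^1$ through the 1D Sobolev embedding). By Proposition \ref{prop_cstr_id} the initial $H^1$ norms are uniformly bounded, and by \ref{it_bound_H1}--\ref{it_H1_bound_tensors} the quantities $\|\tilde u^{(N)}\|_{L^2(0,T;H^1)}$ and $\|\tilde \Sigma_g^{(N)}\|_{L^2(0,T;H^1)}$ are uniformly bounded. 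Choosing $T_0 \leq T$ small enough (depending only on these constants), the Grönwall argument closes and yields the desired uniform $L^\infty(0,T_0;H^1(\Omega))$ bound, together with an $L^\infty$ bound from above and below ensuring no blow-up.

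I expect this $H^1$ step to be the main obstacle, because the source $\tilde\Sigma_g^{(N)}$ is only $L^2$ in time with values in $H^1$, so the Grönwall argument is borderline and crucially requires the $L^\infty_x$ bound on $(\tilde{\rho}_g^{(N)},\tilde{f}_g^{(N)})$ to be propagated along; the smallness of $T_0$ is what allows to absorb the quadratic coupling between $\tilde\rho_g^{(N)}$ and $\tilde f_g^{(N)}$ and the surface-tension term. Once the uniform $L^\infty(0,T_0;H^1)$ bound is secured, we read off from \eqref{eq_transportgasv2} a uniform $L^2((0,T_0) \times \Omega)$ bound on $\partial_t (\tilde{\rho}_g^{(N)},\tilde{f}_g^{(N)})$, and Aubin--Lions (with the compact embedding $H^1(\Omega) \Subset L^2(\Omega)$) yields strong convergence in $L^2((0,T_0) \times \Omega)^2$ up to a subsequence towards some $(\brho_g,\bar f_g)$, completing the proof.
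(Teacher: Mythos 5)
Your proposal follows essentially the same route as the paper: the paper's proof consists of fixing the initial data from Proposition \ref{prop_cstr_id} and then invoking "exactly the same steps as in the proof of Proposition~\ref{prop_tilde_rho_fN}" — well-posedness via characteristics, a uniform $L^\infty(0,T_0;H^1(\Omega))$ bound obtained by the argument of Appendix~\ref{sec_rhoH1} (propagating first an $L^\infty$ bound, then the $H^1$ semi-norm by a Grönwall estimate), an $L^2$-in-time bound on $\partial_t$ read off the equation, and Aubin--Lions compactness. You spell out these steps and also make explicit the identification of $(\tilde\rho_g^{(N)},\tilde f_g^{(N)})$ with $(\rho_g^{(N)},f_g^{(N)})$ on the bubbles, which in the paper is established in the text immediately preceding the proposition via the rewriting of $\partial_x\tilde u^{(N)}$ and the transport structure; this is correct and consistent.
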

%

\begin{proof}
We recall that the initial bubble distribution $(c_k^{(N)},R_k^{(N)})_{k=1,\ldots,N}$
is obtained by applying Proposition \ref{prop_cstr_id} so that they are associated with 
a sequence of initial density/covolume $\tilde{\rho}_g^{(N),0},\tilde{f}_g^{(N),0}$ 
which extend initially $\rho_g^{(N)}$ and $f_g^{(N)}$ and that converge weakly in $H^1(\Omega).$  Hence, we complement \eqref{eq_transportgasv2} with initial condition 
\begin{align}
\tilde{\rho}_g^{(N)}(0,\cdot) = \rho_g^{(N),0} \qquad \tilde{f}_g^{(N)}(0,\cdot) = f_g^{(N),0}  \quad \text{ on $\Omega.$}
 \qquad 
\end{align}
  The result is then proved following exactly the same steps as in the proof
  of Proposition~\ref{prop_tilde_rho_fN}, since
  \ref{it_H1_bound_tensors} involves similar controls on
  $\tilde\Sigma_f^{(N)}$ and $\tilde\Sigma_g^{(N)}$.
\end{proof}
}

\subsection{Two technical lemmas}
\label{sec_technical-lemmas}

We {close this section by providing} two crucial results which allow to pass to
the limit in some nonlinear terms.  {The procedure we apply here is similar to the construction in \cite{BrBuLa}.}

\medskip

Let $b\in C^1([0,1]\times \mathbb R^+\times \mathbb R^+)$ and consider
the sequence
\begin{equation} \label{eq_bN0}
  b^{(N)}(t,x) = b(\chi^{(N)}(t,x),
  \rho^{(N)}(t,x), {f}_g^{(N)}(t,x)), \quad \forall (t,x) \in (0,T)\times \Omega,
\end{equation}
where $\rho^{(N)}$ is defined by \eqref{eq_global_rhoN} and $f_g^{(N)}$ by \eqref{eq_def_dens&cv}.

\begin{Prop} \label{prop_bbar}
There exists $\bar b\in L^\infty((0,T)\times \Omega)$ such
  that, up to a subsequence,
  \begin{equation*}
    b^{(N)}\rightharpoonup \bar b, \quad \text{in } L^\infty((0,T)\times
    \Omega)-w^\star  \text{ when } N\to+\infty. 
  \end{equation*}
  This limit verifies the following identity, for almost every
  $(t,x)\in(0,T)\times\Omega$,
  \begin{equation}
    \label{eq_barb}
    \bar b  = b(1,\bar \rho_f,0)\bar \alpha_f + b(0,\bar{\rho}_g, \bar{f}_g) \bar{\alpha}_g.
  \end{equation}
\end{Prop}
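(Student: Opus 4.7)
The strategy is to exploit the dichotomy fluid/bubbles encoded in $\chi^{(N)}$ so as to replace the oscillating density $\rho^{(N)}$ by the smooth extensions $\tilde\rho_f^{(N)},\tilde\rho_g^{(N)},\tilde f_g^{(N)}$ that we have already shown to converge strongly. The first step is to notice that, since $\chi^{(N)}$ takes only the values $0$ and $1$, we have, pointwise in $(0,T)\times\Omega$,
\begin{equation*}
b^{(N)} = b\bigl(1,\rho_f^{(N)},0\bigr)\,\chi^{(N)} + b\bigl(0,\rho^{(N)},f_g^{(N)}\bigr)(1-\chi^{(N)}),
\end{equation*}
because $f_g^{(N)}$ vanishes on $\mathcal F^{(N)}$ and $\rho^{(N)}$ restricts to $\rho_f^{(N)}$ there. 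Then, using the identifications of {\bf Proposition \ref{prop_tilde_rho_fN}} on $\mathcal F^{(N)}$ and of {\bf Proposition \ref{prop_tilde_rhogN_fgN}} on each $B_k^{(N)}$, this rewrites
\begin{equation*}
b^{(N)} = b\bigl(1,\tilde\rho_f^{(N)},0\bigr)\,\chi^{(N)} + b\bigl(0,\tilde\rho_g^{(N)},\tilde f_g^{(N)}\bigr)(1-\chi^{(N)}).
\end{equation*}

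Next I would pass to the limit in each of the two products separately. By {\bf Proposition \ref{prop_tilde_rho_fN}} and {\bf Proposition \ref{prop_tilde_rhogN_fgN}}, the sequences $\tilde\rho_f^{(N)}$, $\tilde\rho_g^{(N)}$ and $\tilde f_g^{(N)}$ converge to $\bar\rho_f$, $\bar\rho_g$ and $\bar f_g$ strongly in $L^2((0,T_0)\times\Omega)$; up to a further extraction we may assume that these convergences hold almost everywhere. The same propositions combined with the $H^1(\Omega)$ bounds propagated from the initial data ensure a uniform $L^\infty$ bound on these extensions, so that their ranges together with that of $\chi^{(N)}$ lie in a fixed compact subset of $[0,1]\times\mathbb R^+\times\mathbb R^+$. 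Continuity of $b$ then yields, by dominated convergence,
\begin{equation*}
b\bigl(1,\tilde\rho_f^{(N)},0\bigr) \longrightarrow b(1,\bar\rho_f,0), \qquad b\bigl(0,\tilde\rho_g^{(N)},\tilde f_g^{(N)}\bigr) \longrightarrow b(0,\bar\rho_g,\bar f_g),
\end{equation*}
strongly in $L^q((0,T_0)\times\Omega)$ for any finite $q$.

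Finally, the weak-$\star$ convergence $\chi^{(N)} \rightharpoonup^\star \bar\alpha_f$ of {\bf Proposition \ref{prop_transport_chi}} gives $(1-\chi^{(N)})\rightharpoonup^\star \bar\alpha_g=1-\bar\alpha_f$. Multiplying a strongly $L^2$-convergent sequence by a weakly-$\star$ $L^\infty$-convergent one yields convergence of the product in $\mathcal D'((0,T_0)\times\Omega)$ (and in fact weakly in $L^2$), so we obtain
\begin{equation*}
b^{(N)} \rightharpoonup b(1,\bar\rho_f,0)\,\bar\alpha_f + b(0,\bar\rho_g,\bar f_g)\,\bar\alpha_g
\end{equation*}
in $\mathcal D'$. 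Since $b^{(N)}$ is uniformly bounded in $L^\infty$, the extracted limit actually holds in $L^\infty$ weak-$\star$, and \eqref{eq_barb} follows. The only slightly delicate point is the uniform $L^\infty$ control of the extensions, which requires reusing the continuous embedding $H^1(\Omega)\hookrightarrow L^\infty(\Omega)$ together with the $N$-independent $H^1$ bounds of the previous propositions; once this is in place, the rest is a standard weak-times-strong argument.
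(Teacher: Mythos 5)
Your proof is correct and follows essentially the same strategy as the paper: decompose $b^{(N)}$ into the fluid and bubble contributions using $\chi^{(N)}$, replace the raw microscopic quantities by the strongly convergent extensions $\tilde\rho_f^{(N)},\tilde\rho_g^{(N)},\tilde f_g^{(N)}$, and conclude by a strong-times-weak-$\star$ argument. You fill in some details the paper leaves implicit (a.e.\ extraction, the uniform $L^\infty$ bound via $H^1(\Omega)\hookrightarrow L^\infty(\Omega)$, dominated convergence), but the route is identical.
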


\begin{proof}
 By definition, we have:
\[
    b^{(N)} = b(1,\tilde\rho_f^{(N)},0) \chi^{(N)} +  b
    \bigg(0,\tilde\rho_g^{(N)},\tilde f_g^{(N)}\bigg)(1-\chi^{(N)})
\]
  The strong convergence of $\tilde\rho_f^{(N)}$ (resp. $\tilde{\rho}_g^{(N)}$ and $\tilde{f}_g^{(N)}$),
  see Proposition \ref{prop_tilde_rho_fN}  (resp. Proposition \ref{prop_tilde_rhogN_fgN}) and the weak
  convergence of $\chi^{(N)}$ (Proposition \ref{prop_transport_chi})
  ensure that the first term converges weakly towards
  $b(1,\bar\rho_f,0)\bar\alpha_f$ and the second one to $b(0,\bar{\rho}_g,\bar{f}_g)\bar{\alpha}_g.$
\end{proof}

In the following result, the term $\tilde \Sigma^{(N)}$
denotes either $\tilde \Sigma_f^{(N)}$ or $\tilde \Sigma_g^{(N)}$.

\begin{Prop}
  \label{prop_compensated}
  Assume that $\tilde \Sigma^{(N)}$ converges weakly in
  $L^2((0,T);H^1(\Omega))$, and denote by $\bSigma$ its limit.
  Then for all
  $b\in C^1([0,1]\times \mathbb R^+ \times \mathbb R^+)$, it holds
  \begin{equation*}
    \tilde \Sigma^{(N)}b^{(N)} \rightharpoonup \bar \Sigma \bar b, \quad
    \text{in } \mathcal D'((0,T)\times \Omega) \text{ when } N\to+\infty.
  \end{equation*}
\end{Prop}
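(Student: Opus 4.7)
My plan is a compensated-compactness argument: upgrade the weak $L^2(0,T;H^1(\Omega))$ convergence of $\tilde\Sigma^{(N)}$ to strong convergence in $L^2((0,T)\times\Omega)$, then combine this with the weak-$*$ convergence of $b^{(N)}$ furnished by \textbf{Proposition \ref{prop_bbar}} to pass to the limit in the product. Once the strong convergence is in hand, the remaining argument is routine: for any $\varphi\in C^\infty_c((0,T)\times\Omega)$, split
\[
  \int \tilde\Sigma^{(N)} b^{(N)} \varphi \,\mathrm{d}x\,\mathrm{d}t = \int (\tilde\Sigma^{(N)}-\bar\Sigma) b^{(N)} \varphi\,\mathrm{d}x\,\mathrm{d}t + \int \bar\Sigma b^{(N)} \varphi\,\mathrm{d}x\,\mathrm{d}t .
\]
The first term vanishes by Cauchy--Schwarz, using the uniform $L^\infty$-bound on $b^{(N)}$ and the $L^2$-convergence of $\tilde\Sigma^{(N)}$; the second one is handled by noting that $\bar\Sigma\varphi\in L^2\subset L^1$ and invoking the weak-$*$ convergence $b^{(N)}\rightharpoonup \bar b$ in $L^\infty$.

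The core of the work is to establish the strong $L^2$-convergence of $\tilde\Sigma^{(N)}$. I would obtain this through the \textbf{Aubin--Lions} lemma applied with the Gelfand triple $H^1(\Omega)\Subset L^2(\Omega)\hookrightarrow H^{-1}(\Omega)$. The $L^2(0,T;H^1(\Omega))$-bound is furnished by \ref{it_H1_bound_tensors}; the missing ingredient is a uniform-in-$N$ bound on $\partial_t \tilde\Sigma^{(N)}$ in a negative-regularity space such as $L^r(0,T;H^{-1}(\Omega))$ for some $r\geq 1$.

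To produce this time regularity I would argue piecewise in space. On the fluid domain $\mathcal F^{(N)}$, one differentiates $\Sigma_f = \mu_f\partial_x u_f-\mathrm{p}_f(\rho_f)$ in time, substitutes $\partial_t u_f$ from the momentum equation \eqref{eq_fluid_NS_1D} and $\partial_t \rho_f$ from the mass equation \eqref{eq_fluid_mass}, and tests against an $H^1_0$-function, integrating by parts to move one spatial derivative off the second-order terms. The resulting quantities are then controlled in $L^2_tH^{-1}_x$ by the $L^2((0,T)\times\mathcal F^{(N)})$-bounds on $\rho_f(\partial_t u_f+u_f\partial_x u_f)$ and on $\partial_x u_f$ provided by \textbf{Corollary \ref{cor_est_H1}}. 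On each bubble interval $B_k^{(N)}$, $\tilde\Sigma_f^{(N)}$ is affine in $x$, and its coefficients depend on $\Sigma_f(x_k^\pm)$; their time derivatives translate through \eqref{eq_droplet_newton_1D_C_bis}--\eqref{eq_droplet_tensor_sigmak_bis} into expressions involving $\ddot c_k$, $\ddot R_k$, and $\dot R_k/R_k$, whose $L^2_t$-summability across $k$ is exactly guaranteed by \ref{it_H1_bound_tensors}. A symmetric argument treats $\tilde\Sigma_g^{(N)}$ on the alternate subdomains.

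The principal obstacle is ensuring that all time-regularity estimates for $\tilde\Sigma^{(N)}$ are uniform in $N$: the extensions are glued from pieces whose spatial supports shrink like $1/N$, and various geometric factors such as $1/R_k^{(N)}$ and $1/|\mathcal F_k^{(N)}|$ appear when summing over $k$. This uniformity rests on careful bookkeeping combining the scaling assumptions \ref{it_bound_mk0}--\ref{it_bound_F0}, the a priori bounds \ref{it_bound_R}--\ref{it_H1_bound_tensors}, and the interface relations imported from the droplet equations. Provided this bookkeeping is carried out successfully, the Aubin--Lions compactness applies, the strong $L^2$-convergence of $\tilde\Sigma^{(N)}$ follows, and the proposition is established.
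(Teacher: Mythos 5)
Your plan hinges on upgrading the weak convergence of $\tilde\Sigma^{(N)}$ to strong $L^2((0,T)\times\Omega)$ convergence via Aubin--Lions, which requires a uniform-in-$N$ bound on $\partial_t\tilde\Sigma^{(N)}$ in some negative space. This is where the argument breaks. On the bubble intervals $B_k$, the extension $\tilde\Sigma_f^{(N)}$ is the \emph{explicit} affine function
\begin{equation*}
  \tilde\Sigma_f^{(N)}(t,x) = \frac{m_k}{6}\ddot R_k(t) + \Sigma_k(t) + \frac{m_k\ddot c_k(t)}{2R_k(t)}\,(x-c_k(t)),
\end{equation*}
whose coefficients already contain $\ddot c_k$ and $\ddot R_k$. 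Taking a time derivative therefore produces $\dddot c_k$ and $\dddot R_k$ (plus material derivatives of $\Sigma_f$ along the moving interfaces, via the ODEs \eqref{eq_droplet_newton_1D_C}--\eqref{eq_droplet_newton_1D_R}). None of these third-order quantities is controlled by the a priori estimates: the classical-solution regularity in \cite{HMS1} only gives $(c_k,R_k)\in H^2(0,T)$, and \ref{it_H1_bound_tensors} controls $\int_0^T\sum_k m_k(|\ddot R_k|^2+|\ddot c_k|^2)$, not the next derivative. There is no integration by parts that moves this extra time derivative onto a time-independent $H^1_0(\Omega)$ test function, so the proposed $L^r(0,T;H^{-1}(\Omega))$ bound on $\partial_t\tilde\Sigma^{(N)}$ is out of reach with the available estimates. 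Strong $L^2$ compactness of the effective stress is, in fact, exactly what one usually cannot prove in this kind of problem.

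The paper sidesteps this entirely: its proof is a compensated-compactness (div--curl) argument reproducing \cite[Lemma 10]{BrHi2}, adapting the primitive operator $\partial_x^{-1}$ to mean-free functions. In that scheme the time regularity is not sought on $\tilde\Sigma^{(N)}$ at all; it is supplied by $b^{(N)}$, which satisfies the transport equation \eqref{eq_dtb} coming from \eqref{eq_transport_chi}, \eqref{eq_conv_rhoN} and \eqref{eq_transportgaz}, while $\tilde\Sigma^{(N)}$ contributes only its spatial $H^1$ bound. One then integrates by parts against $\partial_x^{-1}(b^{(N)}\varphi)$ and uses the structure $\partial_x\tilde\Sigma_f^{(N)}=\rho_f D_t u_f$ on $\mathcal F^{(N)}$ to identify the limit of the product without ever obtaining strong convergence of $\tilde\Sigma^{(N)}$. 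If you want to salvage your route, the step to revisit is the claimed time-compactness of $\tilde\Sigma^{(N)}$; as written it does not follow, and the compensated-compactness route is both the intended one and the one that actually closes.
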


\begin{proof}
This result is a variant of so-called "compensated compactness" lemma.
We {can reproduce here the proof of \cite[Lemma 10]{BrHi2} up to adapt 
the definition  of the operator $\partial_x^{-1}$ on mean free functions.}

\end{proof}

\section{Derivation of a macroscopic model}
\label{sec_deriv-macr-model}

Thanks to the results of the previous section, we are now in position
to address the limit $N\to+\infty$ for the microscopic model
\eqref{eq_fluid_mass}--\eqref{eq_droplet_tensor_sigmak}.
Based on the previous definitions of macroscopic unknowns, we derive
successively the various equations of \eqref{eq_macromodel}.
This is the content of the following theorem.

\begin{Thm}
  \label{thm_cvgce}
  Let $\bar{\rho}_f,\balpha_f,\bar{\alpha}_{g}, \bar{\rho}_g, \bar{u}$ be
  as constructed in the previous section. Then,
  we have that $(\balpha_f,\bar{\rho}_f, \balpha_g, \bar{\rho}_g,\bar{f}_g,\bar{u})$
  is a solution to
  \eqref{eq_macromodel}-\eqref{eq_comp}-\eqref{eq_bSigma}-\eqref{eq_bT}
  on $(0,T)$ with initial condition on $\Omega$:
  \begin{align*}
  \balpha_f(0,\cdot) = \bar{\alpha}_f^0 && \balpha_g(0,\cdot) = \bar{\alpha}_f^0\\
  \brho_f(0,\cdot) = \bar{\rho}_f^0 && \balpha_g \brho_g(0,\cdot) = \bar{\alpha}_g^0 \bar{\rho}_g^0 \\
  \bar{u}(0,\cdot) = \bar{u}^0 && \bar{\alpha}_g\bar{f}_g(0,\cdot) = \bar{\alpha}_g^0\bar{f}_g^0
  \end{align*}
  
\end{Thm}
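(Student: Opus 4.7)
The plan is to pass to the limit $N \to \infty$ in each equation of the microscopic system \eqref{eq_fluid_mass}--\eqref{eq_droplet_tensor_sigmak}, using the weak and strong convergences assembled in Section~\ref{sec_homog-probl} together with the identification lemmas (Propositions~\ref{prop_bbar} and~\ref{prop_compensated}). The derivation proceeds in four stages: conservation laws for the two species and the covolume; identification of a common mixture stress and of the constitutive relation \eqref{eq_bSigma}; derivation of the void-fraction equation \eqref{eq_bT}; and finally the momentum balance and initial conditions.

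For the three conservation equations, I would combine the transport equation \eqref{eq_transport_chi} for $\chi^{(N)}$ with \eqref{eq_trhofN_cauchy2} and \eqref{eq_transportgasv2} to obtain that $\chi^{(N)} \tilde\rho_f^{(N)}$, $(1-\chi^{(N)}) \tilde\rho_g^{(N)}$, and $(1-\chi^{(N)}) \tilde f_g^{(N)}$ each satisfy a divergence-form conservation law with velocity $\tu^{(N)}$ on the whole of $(0,T) \times \Omega$. Since $\tilde\rho_f^{(N)}$, $\tilde\rho_g^{(N)}$, $\tilde f_g^{(N)}$ and $\tu^{(N)}$ converge strongly in $L^2$ while $\chi^{(N)}$ converges weakly-$\star$, the products pass to the limit in $\mathcal D'$ and yield the first three equations of \eqref{eq_macromodel}.

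To identify the stress tensor I would first observe that by \eqref{eq_tSigmaf}-\eqref{eq_tSigmag} combined with \eqref{eq_droplet_newton_1D_C}-\eqref{eq_droplet_newton_1D_R}, the difference $\tSigma_f^{(N)} - \tSigma_g^{(N)}$ reduces on each $B_k^{(N)}$ to $m_k^{(N)} \ddot R_k^{(N)}/6 + m_k^{(N)} \ddot c_k^{(N)} (x-c_k^{(N)})/(2 R_k^{(N)})$. By the scalings \eqref{eq_scaling} and the bound \ref{it_H1_bound_tensors}, this difference vanishes in $L^2((0,T) \times \Omega)$, so the weak $L^2(0,T;H^1)$-limits of $\tSigma_f^{(N)}$ and $\tSigma_g^{(N)}$ coincide with a common $\bSigma$. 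Using $\p_x \tu^{(N)} = (\tSigma_f^{(N)} + \mathrm p_f(\tilde\rho_f^{(N)}))/\mu_f$ on $\mathcal F^{(N)}$ and $\p_x \tu^{(N)} = (\tSigma_g^{(N)} + \mathrm p_g(\tilde\rho_g^{(N)}) + \bar\gamma_s \tilde f_g^{(N)})/\mu_g$ on each bubble, and multiplying by $\chi^{(N)}$ and $1-\chi^{(N)}$ respectively, Propositions~\ref{prop_compensated} and~\ref{prop_bbar} allow passing to the limit. This yields two scalar identities whose linear combination gives exactly the constitutive relation \eqref{eq_bSigma}. The void-fraction equation then follows by rewriting \eqref{eq_transport_chi} as $\p_t \chi^{(N)} + \p_x(\chi^{(N)} \tu^{(N)}) = \chi^{(N)} \p_x \tu^{(N)}$, passing to the limit in the right-hand side as above, and substituting the expression of $\bSigma$ to obtain \eqref{eq_bT}.

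The hardest step is the momentum equation, because the global momentum flux on $\Omega$ carries an extra radial-dynamics residue. A direct computation on $B_k^{(N)}$ (using that $\tu^{(N)}$ is affine, $\rho^{(N)} = \rho_k^{(N)}$ is piecewise constant, and the Newton equation \eqref{eq_droplet_newton_1D_C}) gives
\begin{equation*}
\p_t(\rho^{(N)} \tu^{(N)}) + \p_x(\rho^{(N)} (\tu^{(N)})^2) = \p_x \tSigma_f^{(N)} + \sum_{k=1}^{N} \rho_k^{(N)} \ddot R_k^{(N)} \frac{x - c_k^{(N)}}{R_k^{(N)}} \mathds{1}_{B_k^{(N)}}.
\end{equation*}
Tested against $\varphi \in C_c^{\infty}((0,T) \times \Omega)$, the residue reduces by Taylor expansion to $\sum_k \tfrac{2}{3} \rho_k^{(N)} R_k^{(N)} m_k^{(N)} \ddot R_k^{(N)} \varphi'(c_k^{(N)}) /2 + O(R_k^4)$, whose size is controlled via Cauchy--Schwarz by $(\sum_k m_k^{(N)} (R_k^{(N)})^2)^{1/2} (\sum_k m_k^{(N)} |\ddot R_k^{(N)}|^2)^{1/2}$, hence of order $N^{-1}$ by \eqref{eq_scaling} and \ref{it_H1_bound_tensors}; it thus vanishes in $\mathcal D'$. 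On the right-hand side, weak $L^2_tH^1_x$-convergence of $\tSigma_f^{(N)}$ gives $\p_x \bSigma$. On the left-hand side, the strong $L^2$-convergence of $\tu^{(N)}$ combined with Proposition~\ref{prop_bbar} applied to $\rho^{(N)} = \chi^{(N)} \tilde\rho_f^{(N)} + (1-\chi^{(N)}) \tilde\rho_g^{(N)}$ identifies the limits of $\rho^{(N)} \tu^{(N)}$ and $\rho^{(N)} (\tu^{(N)})^2$ as $\brho \bu$ and $\brho \bu^2$ with $\brho = \balpha_f \brho_f + \balpha_g \brho_g$, giving the last equation of \eqref{eq_macromodel}. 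Finally, the initial conditions are obtained from Proposition~\ref{prop_cstr_id} and the strong convergence, at $t=0$, of the extended unknowns produced there.
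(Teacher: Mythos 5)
Your proposal is correct and reaches the same final system, but it organizes the limiting argument in a genuinely different way from the paper. For the conservation laws, the paper funnels everything through the abstract \textbf{Proposition~\ref{prop_dtbbar}}: one writes a generic nonlinearity $b(\chi^{(N)},\rho^{(N)},f_g^{(N)})$, derives a universal limit PDE, and then specializes $b$ in Corollary~\ref{cor_vol_mass}. You instead observe directly that $\chi^{(N)}\tilde\rho_f^{(N)}$, $(1-\chi^{(N)})\tilde\rho_g^{(N)}$ and $(1-\chi^{(N)})\tilde f_g^{(N)}$ each satisfy an exact divergence-form transport equation with velocity $\tu^{(N)}$ on all of $\Omega$ (the source term in \eqref{eq_trhofN_cauchy}, resp.\ \eqref{eq_transportgasv2}, coincides with $\dv_x\tu^{(N)}$ precisely on the support of the relevant indicator, so the product satisfies a clean conservation law), and then pass to the limit using strong $L^2$ convergence of the extended densities/velocity against weak-$\star$ $L^\infty$ convergence of $\chi^{(N)}$. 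This is a more elementary route that avoids the general $b$-calculus; the paper's abstraction pays off in that all nonlinear limits are dispatched at once, whereas your argument has to be repeated for each product.

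For the momentum equation the divergence is more pronounced. The paper tests the \emph{fluid-domain} momentum equation against $w\in C^\infty_c((0,T)\times\Omega)$, integrates by parts, converts interface terms via \eqref{eq_droplet_newton_1D_C}--\eqref{eq_droplet_newton_1D_R} and Taylor expansions, and then re-sums the bubble contributions; this is a boundary-term bookkeeping exercise. You instead write a single global balance on $\Omega$:
\begin{equation*}
\p_t(\rho^{(N)}\tu^{(N)}) + \p_x\big(\rho^{(N)}(\tu^{(N)})^2\big) = \p_x\tSigma_f^{(N)} + \sum_{k}\rho_k^{(N)}\,\dfrac{\ddot R_k^{(N)}}{R_k^{(N)}}\,(x-c_k^{(N)})\,\mathds 1_{B_k^{(N)}},
\end{equation*}
which indeed holds in $\mathcal D'$: on $\mathcal F^{(N)}$ it is Navier--Stokes; on $B_k$ a direct computation with $\tu$ affine, $\rho_k=m_k/(2R_k)$ and \eqref{eq_droplet_newton_1D_C} gives exactly the residue; and the Rankine--Hugoniot jump conditions are satisfied because $\tu^{(N)}$ and $\tSigma_f^{(N)}$ are continuous across the material interfaces, so no extra Dirac terms appear. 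After testing, the residue contributes $\tfrac13\sum_k m_k R_k\,\ddot R_k\,\dv_x w(c_k)$ plus higher-order remainders; by Cauchy--Schwarz and the scaling $\sum_k m_k R_k^2 = O(N^{-2})$ together with \ref{it_H1_bound_tensors}, this vanishes as $N\to\infty$. (Your written coefficient contains a slip --- the factor should be $\tfrac13 m_k R_k\ddot R_k$ rather than involving $\rho_k R_k m_k$ --- but $m_k$ and $R_k$ are of the same order $N^{-1}$, so the size estimate and conclusion are unaffected.) Both approaches rely on the same ingredients from Section~\ref{sec_homog-probl} (transport of $\chi^{(N)}$, strong convergence of the extended densities/velocity, weak $L^2_tH^1_x$ convergence of $\tSigma_f^{(N)}$, $\tSigma_g^{(N)}$, and the compensated-compactness Proposition~\ref{prop_compensated}). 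What your route buys is a cleaner, single-line identification of the limit $\p_x\bSigma$ (since $\tSigma_f^{(N)}\rightharpoonup\bSigma_f=\bSigma_g=\bSigma$ in $L^2_tH^1_x$), at the price of first verifying the global $\mathcal D'$ identity above; the paper's route stays closer to the original microscopic equations and is more obviously tied to the bubble dynamics, but requires more careful bookkeeping of boundary terms and Taylor remainders.
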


What remains of this section is devoted to the proof of this theorem.
Our first result provides the limit equation for the
limit $\bar b$ associated with an abstract choice of $b$.

\begin{Prop}
  \label{prop_dtbbar}
  Let $b\in C^1([0,1]\times \mathbb R^+\times \mathbb R^+)$ and define
  \begin{equation*}
    \begin{aligned}
      b_{1,f} (z,\xi,\nu) &= (\p_2 b(z,\xi,\nu)\xi+\p_3 b(z,\xi,\nu)\nu-b(z,\xi,\nu))z,\\
      b_{1,g} (z,\xi,\nu) &= (\p_2 b(z,\xi,\nu)\xi+\p_3 b(z,\xi,\nu)\nu-b(z,\xi,\nu))(1-z),\\
      b_{2,f} (z,\xi,\nu)  &=   (\p_2 b(z,\xi,\nu)\xi+\p_3 b(z,\xi,\nu)\nu-b(z,\xi,\nu))z \mathrm{p}_f(\xi),\\
      b_{2,g} (z,\xi,\nu)  &=   (\p_2 b(z,\xi,\nu)\xi+\p_3
      b(z,\xi,\nu)\nu-b(z,\xi,\nu))(1-z) (\mathrm{p}_g(\xi)+\bar
      \gamma_s \nu).
    \end{aligned}
  \end{equation*}
  Then, the limit $\bar b$ defined in Proposition~\ref{prop_bbar}
  satisfies the equation
  \begin{equation}
  \label{eq_dtbbar}
  \left\{
  \begin{aligned}
 & \p_t \bar b + \p_x (\bar u \bar b) +   \dfrac{1}{\mu_f} \left(
      \bar b_{1,f} \bar \Sigma_f  + \bar b_{2,f}  \right)
    + \dfrac{1}{\mu_g}\left(
      \bar b_{1,g}\bar\Sigma_g  +  \bar b_{2,g} \right)=0\\
 &      \bar{b}(0,\cdot) = \balpha_f^0 b(1,\brho_f^0,0) + \balpha_{g}^0 b(0,\brho_g^0,\bar{f}_g^0)
           \end{aligned}
\right.      
  \end{equation}
\end{Prop}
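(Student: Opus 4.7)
The plan is to derive a microscopic counterpart of \eqref{eq_dtbbar} at the level of each $N$ and then pass to the limit $N\to\infty$ term by term. Since $\chi^{(N)} \in \{0,1\}$ and $f_g^{(N)}$ is supported in the bubbles, the microscopic $b^{(N)}$ can be decomposed as $b^{(N)} = b(1,\tilde \rho_f^{(N)},0)\chi^{(N)} + b(0,\tilde\rho_g^{(N)},\tilde f_g^{(N)})(1-\chi^{(N)})$, using that $\rho^{(N)}$ coincides with $\tilde\rho_f^{(N)}$ on $\mathcal F^{(N)}$ and with $\tilde\rho_g^{(N)}$ on the bubbles (and similarly for $f_g^{(N)}$ vs $\tilde f_g^{(N)}$). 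Recalling that $\chi^{(N)} = \mathbb 1_{\mathcal F^{(N)}}$ is transported by $\tilde u^{(N)}$, the product rule combined with equation \eqref{eq_trhofN_cauchy} and the identity $\mu_f \p_x \tilde u^{(N)} = \tilde\Sigma_f^{(N)} + \mathrm{p}_f(\tilde \rho_f^{(N)})$ on $\mathcal F^{(N)}$ yields
\[
\p_t\bigl(b(1,\tilde\rho_f^{(N)},0)\chi^{(N)}\bigr) + \p_x\bigl(\tilde u^{(N)} b(1,\tilde\rho_f^{(N)},0)\chi^{(N)}\bigr) = -\frac{1}{\mu_f}\bigl(b_{1,f}^{(N)}\tilde\Sigma_f^{(N)} + b_{2,f}^{(N)}\bigr),
\]
the $-b$ in the definition of $b_{1,f}$ and $b_{2,f}$ arising precisely from the extra $b^{(N)}\p_x\tilde u^{(N)}$ term generated by the conservative form. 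The analogous computation on each bubble, using \eqref{eq_transportgasv2} and the identity $\mu_g\p_x\tilde u^{(N)}=\tilde\Sigma_g^{(N)}+\mathrm{p}_g(\tilde\rho_g^{(N)})+\bar\gamma_s\tilde f_g^{(N)}$, produces the gas counterpart. Summing the two contributions gives the microscopic identity
\[
\p_t b^{(N)} + \p_x(\tilde u^{(N)} b^{(N)}) + \frac{1}{\mu_f}\bigl(b_{1,f}^{(N)}\tilde\Sigma_f^{(N)} + b_{2,f}^{(N)}\bigr) + \frac{1}{\mu_g}\bigl(b_{1,g}^{(N)}\tilde\Sigma_g^{(N)} + b_{2,g}^{(N)}\bigr)=0
\]
in $\mathcal D'((0,T)\times \Omega)$.

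With this microscopic identity at hand, I would pass to the limit term by term. The time derivative converges in $\mathcal D'$ directly from $b^{(N)}\rightharpoonup\bar b$ (Proposition \ref{prop_bbar}), and $\tilde u^{(N)} b^{(N)} \to \bar u \bar b$ follows from the strong convergence $\tilde u^{(N)}\to\bar u$ in $L^2$ (Proposition \ref{prop_CV_uf}) combined with the uniform $L^\infty$-bound and weak-$\ast$ convergence of $b^{(N)}$. The pressure contributions $b_{2,f}^{(N)},b_{2,g}^{(N)}$ are continuous functions of $(\chi^{(N)},\rho^{(N)},f_g^{(N)})$ and converge weak-$\ast$ to $\bar b_{2,f},\bar b_{2,g}$ by a second use of Proposition \ref{prop_bbar}. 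The main obstacle is the passage to the limit in the stress--density products $b_{1,f}^{(N)}\tilde\Sigma_f^{(N)}$ and $b_{1,g}^{(N)}\tilde\Sigma_g^{(N)}$, since both factors converge only weakly. This is exactly the situation covered by the compensated-compactness statement of Proposition \ref{prop_compensated}, which provides the limits $\bar b_{1,f}\bar\Sigma_f$ and $\bar b_{1,g}\bar\Sigma_g$ in $\mathcal D'$.

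It remains to identify the initial datum. Testing the microscopic identity against $\phi\in C_c^\infty([0,T)\times\Omega)$ (with a possibly nonzero trace at $t=0$) and using the convergences above produces a weak formulation of \eqref{eq_dtbbar} complemented by the initial condition $\bar b(0,\cdot) = \lim_{N\to\infty} b^{(N)}(0,\cdot)$. This limit is identified using item ii) of Proposition \ref{prop_cstr_id} applied with $\beta(\xi,\nu) := b(0,\xi,\nu)$, together with the weak-$\ast$ convergence $\chi^{(N),0}\rightharpoonup\bar\alpha_f^0$ and the fact that $\tilde\rho_f^{(N)}(0,\cdot) = \bar\rho_f^0$ is fixed, which together yield $\bar b(0,\cdot) = \bar\alpha_f^0 b(1,\bar\rho_f^0,0) + \bar\alpha_g^0 b(0,\bar\rho_g^0,\bar f_g^0)$, as required.
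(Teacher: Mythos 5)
Your proof is correct and follows essentially the same strategy as the paper: derive a microscopic conservation-form identity for $b^{(N)}$, then pass to the limit term-by-term via Propositions~\ref{prop_bbar}, \ref{prop_CV_uf}, \ref{prop_compensated}, and identify the initial datum via Proposition~\ref{prop_cstr_id}. The only (cosmetic) difference is that you obtain the microscopic identity by first splitting $b^{(N)}$ into its fluid and gas pieces and applying the extended evolution equations \eqref{eq_trhofN_cauchy} and \eqref{eq_transportgasv2} separately, whereas the paper applies the chain rule directly to $b(\chi^{(N)},\rho^{(N)},f_g^{(N)})$ using the transport equations \eqref{eq_transport_chi}, \eqref{eq_conv_rhoN}, \eqref{eq_transportgaz}; both routes produce the same identity \eqref{eq_dtb}.
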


\begin{proof}
 Let us compute for arbitrary $N \in \mathbb N$
  \begin{align*}
    \dv_t b(\chi^{(N)}, \rho^{(N)}, f_g^{(N)}) &= \dv_1 b^{(N)} \dv_t
                                                 \chi^{(N)} + \dv_2 b^{(N)}
                                                 \dv_t \rho^{(N)} +
                                                 \dv_3 b^{(N)} \dv_t
                                                 {f}_g^{(N)} \\
                                               &= - \dv_1 b^{(N)} \tilde u^{(N)} \dv_x \chi^{(N)}
                                                     - \dv_2 b^{(N)}  \dv_x (\rho^{(N)}
                                                 \tilde u^{(N)})
                                                     - \dv_3 b^{(N)}  \dv_x ({f}_g^{(N)}
                                                 \tilde u^{(N)})
  \end{align*}
  by \eqref{eq_transport_chi}, \eqref{eq_conv_rhoN} and
  \eqref{eq_transportgaz}. As a result, we obtain:
 \begin{multline} \label{eq_dtb}
    \dv_t b(\chi^{(N)}, \rho^{(N)}, {f}_g^{(N)}) + \dv_x
    (b(\chi^{(N)}, \rho^{(N)}, {f}_g^{(N)}) \tilde u^{(N)}) \\
    + \big(\dv_2b(\chi^{(N)}, \rho^{(N)}, f_g^{(N)}) \rho^{(N)} + \dv_3b(\chi^{(N)}, \rho^{(N)}, {f}_g^{(N)})
    f_g^{(N)}\\  - b(\chi^{(N)}, \rho^{(N)}, {f}_g^{(N)}) \big) \dv_x
    \tilde u^{(N)} = 0,
  \end{multline}
  in $\mathcal D'((0,T)\times \Omega)$.  In this equation, due to
  the weak convergence of $b^{(N)}$ and the strong convergence of
  $\tu_f^{(N)}$, respectively stated in Propositions \ref{prop_bbar} and
  \ref{prop_CV_uf}, it holds that:
  \begin{equation*}
    \begin{cases}
      b^{(N)} \rightharpoonup \bar{b},  \\
      u_f^{(N)} b(\chi^{(N)},\rho^{(N)},{f}_g^{(N)}) \rightharpoonup  \bar{u}\, \bar{b} ,
    \end{cases}
    \qquad 
    \text{in }\mathcal D'((0,T) \times \Omega).
  \end{equation*}
  Then,  we rewrite:
  \begin{align*}
    \p_2 b(\chi^{(N)},
    \rho^{(N)}, f_g^{(N)}) \rho^{(N)}&+ \p_3b(\chi^{(N)}, \rho^{(N)},
    f_g^{(N)}) f_g^{(N)}-b^{(N)}\p_xu_f^{(N)} \\
    &= \dfrac{1}{\mu_f} \left(
      b_{1,f}^{(N)} \tilde \Sigma_f^{(N)} +  b_{2,f}^{(N)}  \right)
    + \dfrac{1}{\mu_g}\left(
      b_{1,g}^{(N)} \tilde \Sigma_g^{(N)} +  b_{2,g}^{(N)}  \right).
  \end{align*}
  The weak convergence stated in Proposition \ref{prop_compensated}
  allows to pass to the limit the right-hand side, leading to
  \begin{align*}
    \p_2 b(\chi^{(N)},
    \rho^{(N)}, f_g^{(N)})\rho^{(N)} &+ \p_3b(\chi^{(N)}, \rho^{(N)},
    f_g^{(N)}) f_g^{(N)}-b^{(N)}\p_xu_f^{(N)}\\
    &\rightharpoonup
    \dfrac{1}{\mu_f} \left(
      \bar b_{1,f} \bar \Sigma_f  + \bar b_{2,f}  \right)
    + \dfrac{1}{\mu_g}\left(
      \bar b_{1,g}\bar\Sigma_g  +  \bar b_{2,g} \right),
  \end{align*}
  where the terms $\bar b_{1,f}$, $\bar b_{1,g}$, $\bar b_{2,f}$, and
  $\bar b_{2,g}$ are defined as in Proposition~\ref{prop_bbar}.  This
  provides Equation \eqref{eq_dtbbar} for $\bar b$.
  
Finally, we have initially
\[
b^{(N)}(0,\cdot) = \chi^{(N),0} b(1,\rho_f^0,0) + (1 - \chi^{(N),0}) b(0,\tilde{\rho}_g^{(N),0},\tilde{f}_g^{(N),0})
\]  
and we are in position to apply {Proposition \ref{prop_cstr_id}} to pass to the limit in this identity when $N \to \infty.$
\end{proof}

Let us recall that the link between the limit $\bar b$ and the
function $b$ is provided in Proposition \ref{prop_bbar}. According to
the choice of $b$, different relevant macroscopic equations can be
obtained.

\begin{Cor}
  \label{cor_vol_mass}
 The volumic fractions satisfy the following equations
  \begin{equation}
    \label{eq_baralpha}
    \left\{
\begin{aligned}  
   &  \p_t \bar \alpha_f + \p_x (\bar \alpha_f \bar u)= \dfrac{\bar
      \alpha_f}{\mu_f}\left( \bar \Sigma_f + \mathrm{p}_f(\bar \rho_f)\right), 
      \qquad \bar{\alpha}_f(0,\cdot) = \bar{\alpha}_f^0 \\
    &   \balpha_f + \balpha_g = 1 
      \end{aligned}
      \right.
  \end{equation}
The covolume unkwnown $\bar{f}_g$ satisfies the conservation equation:
 \begin{equation*}
    \p_t (\bar \alpha_g \bar f_g) + \p_x(\bar \alpha_g \bar f_g \bar u)=0, 
    \qquad \balpha_g(0,\cdot) \bar f_g(0,\cdot) = \bar{\alpha}_g^0 \bar{f}_g^0.
  \end{equation*}
  The mass conservation laws of both phases read
  \begin{align}
    \label{eq_massconservation}
    \p_t (\bar \alpha_f\bar \rho_f)+ \p_x (\bar \alpha_f\bar
    \rho_f\bar u)= 0, \qquad  \bar \alpha_f(0,\cdot)\bar \rho_f(0,\cdot) = \bar \alpha_f^0 \bar \rho_f^0 \\
    \p_t (\bar \alpha_g\bar \rho_g)+ \p_x (\bar \alpha_g\bar \rho_g\bar u)= 0,
    \qquad \bar \alpha_g(0,\cdot)\bar \rho_g(0,\cdot)= \bar \alpha_g^0\bar \rho_g^0.
  \end{align}
\end{Cor}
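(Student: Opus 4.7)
The plan is to apply Proposition \ref{prop_dtbbar} to a short list of carefully chosen test functions $b$, for each of which the associated limit $\bar b$ given by Proposition \ref{prop_bbar} reduces to exactly the macroscopic unknown whose equation we wish to derive. The key structural observation is that each of the auxiliary functions $b_{1,f}, b_{1,g}, b_{2,f}, b_{2,g}$ contains the factor $\p_2 b(z,\xi,\nu)\xi + \p_3 b(z,\xi,\nu)\nu - b(z,\xi,\nu)$, which vanishes identically whenever $b$ is homogeneous of degree one in $(\xi,\nu)$ by Euler's identity. Consequently the source terms in \eqref{eq_dtbbar} collapse either to zero or to an explicit pressure/stress contribution, depending on the homogeneity of $b$.

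For the volumic fraction equation in \eqref{eq_baralpha}, I would pick $b(z,\xi,\nu) = z$, which is homogeneous of degree zero in $(\xi,\nu)$. Then $\p_2 b = \p_3 b = 0$ and $\p_2 b\,\xi + \p_3 b\,\nu - b = -z$, giving $b_{1,f}(z,\xi,\nu) = -z^2$, $b_{1,g}(z,\xi,\nu) = -z(1-z)$, $b_{2,f}(z,\xi,\nu) = -z^2\mathrm{p}_f(\xi)$ and $b_{2,g}(z,\xi,\nu) = -z(1-z)(\mathrm{p}_g(\xi)+\bar\gamma_s\nu)$. Evaluating at $(1,\brho_f,0)$ and $(0,\brho_g,\bar f_g)$, Proposition \ref{prop_bbar} identifies $\bar b = \balpha_f$, $\bar b_{1,f} = -\balpha_f$, $\bar b_{2,f} = -\balpha_f\,\mathrm{p}_f(\brho_f)$ and $\bar b_{1,g} = \bar b_{2,g} = 0$. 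Plugging into \eqref{eq_dtbbar} gives precisely the desired equation for $\balpha_f$. The compatibility identity $\balpha_f + \balpha_g = 1$ follows by passing to the weak limit in the trivial pointwise identity $\chi^{(N)} + (1-\chi^{(N)}) = 1$ and applying the symmetric analogue of Proposition \ref{prop_transport_chi} for $\balpha_g$.

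For the remaining three equations of the corollary I would take respectively $b(z,\xi,\nu) = (1-z)\nu$, $b(z,\xi,\nu) = z\xi$ and $b(z,\xi,\nu) = (1-z)\xi$. Each is homogeneous of degree one in $(\xi,\nu)$, so the Euler-type combination appearing in every $b_{i,\cdot}$ vanishes identically, and \eqref{eq_dtbbar} reduces to a pure conservation law. The corresponding $\bar b$ computed from Proposition \ref{prop_bbar} equals $\balpha_g \bar f_g$, $\balpha_f \brho_f$ and $\balpha_g \brho_g$ respectively, yielding exactly the three conservation laws $\p_t(\balpha_g \bar f_g) + \p_x(\balpha_g \bar f_g \bu) = 0$, $\p_t(\balpha_f \brho_f) + \p_x(\balpha_f \brho_f \bu) = 0$ and $\p_t(\balpha_g \brho_g) + \p_x(\balpha_g \brho_g \bu) = 0$.

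The initial conditions follow from the initial identity in \eqref{eq_dtbbar}, which expresses $\bar b(0,\cdot)$ as $\balpha_f^0 b(1,\brho_f^0,0) + \balpha_g^0 b(0,\brho_g^0,\bar f_g^0)$; specializing this to each of the four choices of $b$ above reproduces the prescribed initial data. The only delicate point is the passage to the limit in the $N$-dependent initial term involving $\tilde \rho_g^{(N),0}$ and $\tilde f_g^{(N),0}$ carried out within Proposition \ref{prop_dtbbar}, which is granted by item (ii) of Proposition \ref{prop_cstr_id} applied with $\beta(\xi,\nu) = b(0,\xi,\nu)$. No genuine obstacle arises, as the statement of Proposition \ref{prop_dtbbar} has been tailored precisely so that these natural specializations yield the macroscopic system in a direct, mechanical fashion.
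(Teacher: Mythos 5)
Your proof is correct and follows the same strategy as the paper: apply Proposition~\ref{prop_dtbbar} with $b(z,\xi,\nu)=z$ for the volume-fraction equation, and with degree-one homogeneous choices of $b$ in $(\xi,\nu)$ (so that the Euler combination $\p_2 b\,\xi+\p_3 b\,\nu-b$ vanishes and the source terms disappear) for the three conservation laws. The only inconsequential variant is that you take $b(z,\xi,\nu)=(1-z)\nu$ for the covolume equation where the paper takes $b(z,\xi,\nu)=\nu$; both yield $\bar b=\balpha_g\bar f_g$ via Proposition~\ref{prop_bbar} and both lead to a pure conservation law.
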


\begin{proof}
  By Proposition~\ref{prop_dtbbar}, it suffices to compute the
  different terms of Equation~\eqref{eq_dtbbar}. In the first case, we
  consider $b(z,\xi,\nu) = z$. It yields $\bar b=\balpha_f$ and
  \begin{equation*}
    \begin{aligned}
      b_{1,f} (1,r) &= -1, &
      b_{1,g} (1,r) &= 0, &
      b_{2,f} (1,r)  &=   - \mathrm{p}_f(r), &
      b_{2,g} (1,r)  &=  0,\\
      b_{1,f} (0,r) &= 0, &
      b_{1,g} (0,r) &= 0, &
      b_{2,f} (0,r)  &=   0, &
      b_{2,g} (0,r)  &=  0.
    \end{aligned}
  \end{equation*}
  Computing the associated limits, one recovers the first equation of~\eqref{eq_baralpha}.
  The second equation is true by construction.
  The equation on $\bar f_g$ is obtained in the same way, taking
  $b(z,\xi,\nu)=\nu$.
  Finally the phasic mass conservation laws are derived using
  $b(z,\xi,\nu)=z\xi$ and $b(z,\xi,\nu)=(1-z)\xi$ respectively.
\end{proof}

\subsection{Momentum equation and closure laws}
\label{sec_momentum-closure}
We proceed with the derivation of the momentum equation. 

\begin{Prop}
  Let $\bar \rho=\bar \alpha_f \bar \rho_f+\bar \alpha_g \bar \rho_g$
  be the mixture density. The mixture momentum equation reads
  \begin{equation} \label{eq_moment_asym}
    \partial_t (\bar{\rho} \bar{u}) + \partial_x (\bar{\rho}
    \bar{u}^2) = \partial_x (\bar{\alpha}_f \bar{\Sigma}_f +
    \bar{\alpha}_g \bar{\Sigma}_g),
  \end{equation}
  with
  \begin{equation}
    \partial_x \bar{u} = \dfrac{\bar{\alpha}_f}{\mu_f} \left[
      \bar{\Sigma}_f + \mathrm{p}_f(\bar{\rho}_f)\right]
    + \dfrac{\bar \alpha_g}{\mu_g}\left[\bar{\Sigma}_g +
      \mathrm{p}_g(\bar\rho_g)+\bar{\gamma}_s  \bar{f}_g\right],
  \end{equation}
  and
  \begin{equation}
    \label{eq_id_sigma}
    \bar{\Sigma}_f = \bar{\Sigma}_g.
  \end{equation}
\end{Prop}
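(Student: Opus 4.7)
The plan is to establish the three claims in the order: first $\bar\Sigma_f = \bar\Sigma_g$, then the formula for $\p_x \bar u$, and finally the momentum balance \eqref{eq_moment_asym}; this order makes the final identification of the right-hand side transparent.

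For $\bar\Sigma_f = \bar\Sigma_g$, I would show that $\tilde\Sigma_f^{(N)} - \tilde\Sigma_g^{(N)} \to 0$ in $L^2((0,T)\times\Omega)$, which combined with the weak limits of Proposition \ref{prop_CV_Sigma} forces equality of the limits. On each bubble $B_k$, plugging the jump relations \eqref{eq_droplet_newton_1D_C_bis}-\eqref{eq_droplet_newton_1D_R_bis} into \eqref{eq_tSigmaf}-\eqref{eq_tSigmag} gives the explicit expression
\[
(\tilde\Sigma_f^{(N)} - \tilde\Sigma_g^{(N)})\big|_{B_k} = \frac{m_k \ddot R_k}{6} + \frac{m_k \ddot c_k}{2R_k}(x-c_k),
\]
whose squared $L^2(B_k)$ norm is of order $m_k^2 R_k(\ddot c_k^2 + \ddot R_k^2)$. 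On a fluid interval $\mathcal F_k$ the difference compares $\Sigma_f$ with the affine interpolation of $\Sigma_k, \Sigma_{k+1}$; splitting via the affine interpolation $L\Sigma_f$ of $\Sigma_f$ at the endpoints, the first piece is bounded by a Poincar\'e-type inequality by $|\mathcal F_k|^2 \|\p_x\Sigma_f\|_{L^2(\mathcal F_k)}^2$, while the second is controlled by the endpoint discrepancy \eqref{eq_estim_sigmaTg_6} of order $m_k^2(\ddot c_k^2+\ddot R_k^2)$. Summing, using the scaling $m_k, R_k, |\mathcal F_k| \sim 1/N$, invoking \ref{it_H1_bound_tensors} and integrating in time yields the claim.

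For the $\p_x\bar u$ identity, I would apply Proposition \ref{prop_dtbbar} to $b(z,\xi,\nu)=1-z$. Since $\p_\xi b = \p_\nu b = 0$, the computation of $\bar b_{1,f}, \bar b_{1,g}, \bar b_{2,f}, \bar b_{2,g}$ collapses to the analogue of \eqref{eq_baralpha} for the gaseous phase, namely
\[
\p_t \bar\alpha_g + \p_x(\bar\alpha_g \bar u) = \frac{\bar\alpha_g}{\mu_g}\bigl[\bar\Sigma_g + \mathrm{p}_g(\bar\rho_g) + \bar\gamma_s \bar f_g\bigr].
\]
Summing with \eqref{eq_baralpha} and using the compatibility $\bar\alpha_f + \bar\alpha_g = 1$ cancels the time derivative and convective term except for $\p_x\bar u$, producing the stated identity.

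For the momentum equation, I would pass to the limit in a distributional identity on $\Omega$ of the form
\[
\p_t(\rho^{(N)} \tilde u^{(N)}) + \p_x(\rho^{(N)}(\tilde u^{(N)})^2) = \p_x \tilde\Sigma_f^{(N)} + \mathcal E^{(N)}.
\]
On $\mathcal F^{(N)}$ this reduces without remainder to \eqref{eq_fluid_NS_1D} since $\Sigma_f=\tilde\Sigma_f^{(N)}$ there. On each $B_k$, a direct time-differentiation of the affine profile $\rho^{(N)} \tilde u^{(N)}|_{B_k}$, combined with \eqref{eq_droplet_newton_1D_C_bis}, reproduces $\p_x\tilde\Sigma_f^{(N)}|_{B_k} = m_k\ddot c_k/(2R_k)$ and leaves the antisymmetric residual $\mathcal E^{(N)} = \sum_k (m_k\ddot R_k/(2R_k^2))(x-c_k)\mathbf 1_{B_k}$. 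The potential distributional contributions at the moving interfaces $x_k^\pm$, arising from the jumps of $\rho^{(N)}$, cancel because $\tilde u^{(N)}$ is continuous there and matches the interface velocity $\dot x_k^\pm$, which is exactly the Rankine--Hugoniot cancellation. Testing $\mathcal E^{(N)}$ against $\varphi \in C^\infty_c$, a Taylor expansion around $c_k$ together with $\int_{B_k}(x-c_k)\,dx = 0$ bounds $\langle \mathcal E^{(N)},\varphi\rangle$ by $\|\p_x \varphi\|_{L^\infty}\int_0^T \sum_k m_k R_k |\ddot R_k|\,dt$, which by Cauchy--Schwarz and \ref{it_H1_bound_tensors} is $O(1/N)$. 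For the remaining terms, the strong $L^2$ convergence of $\tilde u^{(N)}$ (Proposition \ref{prop_CV_uf}), the uniform $L^\infty$ bound on $\rho^{(N)}$, and its weak-$\star$ limit $\bar\rho = \bar\alpha_f\bar\rho_f + \bar\alpha_g\bar\rho_g$ (Proposition \ref{prop_bbar} with $b=\xi$) yield $\rho^{(N)}\tilde u^{(N)} \rightharpoonup \bar\rho\bar u$ and $\rho^{(N)}(\tilde u^{(N)})^2\rightharpoonup \bar\rho\bar u^2$ in $\mathcal D'$. The limit reads $\p_t(\bar\rho\bar u) + \p_x(\bar\rho\bar u^2) = \p_x\bar\Sigma_f$, which is rewritten as $\p_x(\bar\alpha_f\bar\Sigma_f + \bar\alpha_g\bar\Sigma_g)$ using $\bar\Sigma_f = \bar\Sigma_g$ and $\bar\alpha_f + \bar\alpha_g = 1$.

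The main obstacle will be the rigorous handling of the bubble residual $\mathcal E^{(N)}$, which is not pointwise small but only vanishes in $\mathcal D'$ thanks to the antisymmetry around $c_k$; it is precisely this cancellation, together with the quantitative bound $(Q_5)$ on $\int_0^T \sum_k m_k\ddot R_k^2\,dt$, that makes the argument work without requiring $L^2$-control of the bubble accelerations uniformly in $N$.
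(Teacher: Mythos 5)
Your proposal is correct but takes a genuinely different route from the paper in all three parts. For $\bar\Sigma_f = \bar\Sigma_g$, the paper only establishes $(1-\chi^{(N)})(\tilde\Sigma_f^{(N)} - \tilde\Sigma_g^{(N)})\to 0$ in $L^2$ and deduces $\bar\alpha_g(\bar\Sigma_f - \bar\Sigma_g)=0$ via Proposition~\ref{prop_compensated}, closing with the strict bound $\bar\alpha_f<1$ from \eqref{eq_alphasup}; you prove the stronger full-domain convergence $\tilde\Sigma_f^{(N)}-\tilde\Sigma_g^{(N)}\to 0$ in $L^2((0,T)\times\Omega)$ by also controlling the fluid intervals through the affine-interpolation split and the endpoint discrepancy \eqref{eq_estim_sigmaTg_6}, which bypasses compensated compactness and the lower bound on $\bar\alpha_g$ entirely. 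For the identity on $\partial_x\bar u$, the paper passes to the limit directly in the pointwise relation for $\partial_x u^{(N)}$; you instead apply Proposition~\ref{prop_dtbbar} with $b(z,\xi,\nu)=1-z$ to get the gas-phase companion of \eqref{eq_baralpha} and sum — a pleasantly elementary repackaging of the same ingredients, already latent in Corollary~\ref{cor_vol_mass}. For the momentum balance, the paper tests \eqref{eq_fluid_NS_1D} against $w$, resolves the traces at $x_k^\pm$ by Taylor expansion of $w$ around $c_k$, and arrives at the combination $\chi^{(N)}\tilde\Sigma_f^{(N)}+(1-\chi^{(N)})\tilde\Sigma_g^{(N)}$; you globalize the equation on $\Omega$ with the single stress $\tilde\Sigma_f^{(N)}$ and the antisymmetric bubble residual $\mathcal E^{(N)}=\sum_k \tfrac{m_k\ddot R_k}{2R_k^2}(x-c_k)\mathbb{1}_{B_k}$, exploiting the Rankine--Hugoniot cancellation at the material interfaces — which requires not only the continuity of $\tilde u^{(N)}$ that you cite but also, implicitly, the continuity of $\tilde\Sigma_f^{(N)}$ built into \eqref{eq_tSigmaf} — and then remove $\mathcal E^{(N)}$ in $\mathcal D'$ via the moment condition $\int_{B_k}(x-c_k)\,\mathrm{d}x=0$, a Cauchy--Schwarz estimate, and \ref{it_H1_bound_tensors}. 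Because your argument yields $\partial_x\bar\Sigma_f$ rather than $\partial_x(\bar\alpha_f\bar\Sigma_f+\bar\alpha_g\bar\Sigma_g)$, it necessarily relies on the already-proved identity $\bar\Sigma_f=\bar\Sigma_g$ to recover the stated form; this justifies reversing the order of the three steps relative to the paper, which derives the symmetric stress term without invoking that identity.
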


\begin{proof}
  Let us consider the momentum equation in the fluid domain and
  multiply it by a test function $w\in C_c^\infty((0,T)\times
  \Omega)$. It yields
  \begin{equation*}
    \int_0^T \int_{\mathcal F^{(N)}(t)} (\p_t (\rho_f^{(N)}u_f^{(N)})
    + \p_x (\rho_f^{(N)}|u_f^{(N)}|^2))w \mathrm{d}x \mathrm{d}t =
    \int_0^T \int_{\mathcal F^{(N)}(t)}  \p_x \Sigma^{(N)}_f w \mathrm{d}x \mathrm{d}t.
  \end{equation*}
  Since the fluid domain $\mathcal F^{(N)}(t)$ is transported 
  with the velocity $u_f^{(N)}$, an integration by part in time of the
  left-hand side gives
  \begin{align*}
     \int_0^T \int_{\mathcal F^{(N)}(t)} (\p_t (\rho_f^{(N)}u_f^{(N)})
    &+ \p_x (\rho_f^{(N)}|u_f^{(N)}|^2))w \mathrm{d}x \mathrm{d}t \\
    &= -
    \int_0^T \int_{\mathcal F^{(N)}(t)}
    \rho_f^{(N)}u_f^{(N)}
    (\p_t w+ u_f^{(N)}\p_x w) \mathrm{d}x \mathrm{d}t.
  \end{align*}
The right-hand side is handled by an integration by part in
space. Reorganising the boundary terms yields (we omit time dependencies for simplicity):
\begin{align*}
   \int_0^T \int_{\mathcal F^{(N)}(t)}  \p_x \Sigma_f^{(N)} w \mathrm{d}x
   \mathrm{d}t
   =& - \int_0^T \sum_{k=1}^N (\Sigma_f^{(N)}(x_k^+) w(x_k^+) -
  \Sigma_f^{(N)}(x_k^-)w(x_k^-)) \mathrm{d}t \\
  &- \int_0^T \int_{\mathcal
     F^{(N)}}\Sigma_f^{(N)}\p_x w \mathrm{d}x
   \mathrm{d}t.
\end{align*}
We now focus on the boundary terms. For $k=1,\ldots,N$, one has
\begin{align*}
  \Sigma_f^{(N)}(x_k^+) w(x_k^+) -
  \Sigma_f^{(N)}(x_k^-)w(x_k^-) &= (\Sigma_f^{(N)}(x_k^+)
                                  -\Sigma_f^{(N)}(x_k^-)) w(c_k) \\
  &+ (\Sigma_f^{(N)}(x_k^+)
    +\Sigma_f^{(N)}(x_k^-)) R_k \p_x w(c_k)\\
  &+
  O(\|\Sigma_f^{(N)}\|_{L^\infty(\Omega)}R_k^2 \|w\|_{C^2}).
  \end{align*}
From the bubbles equations \eqref{eq_droplet_newton_1D_C} and
\eqref{eq_droplet_newton_1D_R}, one deduces
\begin{align*}
  \Sigma_f^{(N)}(x_k^+) w(x_k^+) -
  \Sigma_f^{(N)}(x_k^-)w(x_k^-) &= m_k \ddot c_k w(t,c_k) + \left(
  \frac{m_k}{3}\ddot R_k+ 2 \Sigma_k\right)R_k \p_x w(t,c_k)\\
  &+ O(\|\Sigma_f^{(N)}\|_{L^\infty(\Omega)}R_k^2 \|w\|_{C^2}).
\end{align*}
The term involving the stress tensor can be rewritten as follows
\begin{equation*}
  2 \Sigma_kR_k \p_x w(t,c_k)=\int_{B_k}\Sigma_g^{(N)}\p_x w \mathrm{d}x+ O(\|\Sigma_g^{(N)}\|_{L^\infty(\Omega)}R_k^2 \|w\|_{C^2}).
\end{equation*}
Therefore, one has
\begin{align*}
  &- \int_0^T \sum_{k=1}^N (\Sigma_f^{(N)}(x_k^+) w(x_k^+) -
    \Sigma_f^{(N)}(x_k^-)w(x_k^-)) \mathrm{d}t \\
  &= - \int_0^T \sum_{k=1}^N \bigg( m_k \ddot c_k w(t,c_k) +
    \frac{m_k}{3}\ddot R_kR_k \p_x w(t,c_k) +
    \int_{B_k}\Sigma_g^{(N)}\p_x w \mathrm{d}x\\
  &\quad+O((\|\Sigma_f^{(N)}\|_{L^\infty(\Omega)}+\|\Sigma_g^{(N)}\|_{L^\infty(\Omega)})R_k^2 \|w\|_{C^2})\bigg)\mathrm{d}t.
\end{align*}
An integration by part in time gives
\begin{align*}
  &- \int_0^T \sum_{k=1}^N (\Sigma_f^{(N)}(x_k^+) w(x_k^+) -
    \Sigma_f^{(N)}(x_k^-)w(x_k^-)) \mathrm{d}t \\
  &= \int_0^T \sum_{k=1}^N m_k \bigg( |\dot c_k|^2 \p_x w(t,c_k) + \dfrac
  1 3 |\dot R_k|^2 \p_x w(t,c_k) + \frac 1 3 \dot R_k R_k \dot c_k
  \p_{xx}w(t,c_k)\bigg)\mathrm{d}t\\
  &\quad + \int_0^T \sum_{k=1}^N m_k \bigg( \dot c_k \p_t w(t,c_k) +
    \frac 1 3 \dot R_k R_k \p_{xt} w(t,c_k)\bigg) \mathrm{d}t\\
  & \quad - \int_0^T \int_{\Omega\setminus \mathcal F^{(N)}}
    \Sigma_g^{(N)}\p_x w \mathrm{d}x\mathrm{d}t \\
  & \quad +
    O\bigg((\|\Sigma_f^{(N)}\|_{L^2((0,T),H^1(\Omega))}+\|\Sigma_g^{(N)}\|_{L^2((0,T),H^1(\Omega))})\sqrt{T}
    \|w\|_{C^2}\max_{[0,T]}\sum_{k=1}^N R_k^2\bigg)\\
  &= \int_0^T \sum_{k=1}^N m_k \bigg( |\dot c_k|^2 \p_x w(t,c_k) + \dfrac
  1 3 |\dot R_k|^2 \p_x w(t,c_k) +\dot c_k \p_t w(t,c_k) \bigg)\mathrm{d}t
  \\
  &\quad- \int_0^T \int_{\Omega\setminus \mathcal F^{(N)}}
    \Sigma_g^{(N)}\p_x w \mathrm{d}x\mathrm{d}t \\
  & \quad +O\bigg((\|\Sigma_f^{(N)}\|_{L^2((0,T),H^1(\Omega))}+\|\Sigma_g^{(N)}\|_{L^2((0,T),H^1(\Omega))})\sqrt{T}
      \|w\|_{C^2}{(d_\infty N)^{-1}}\\
  & \qquad\qquad+{\color{red} (M_\infty N)^{-\frac 12}}\|w\|_{C^2}T\sqrt{E_0}\bigg).
\end{align*}
{\color{red} where we applied \ref{it_bound_mk0} and \eqref{it_bound_energy}
 with \ref{it_bound_R} to yield the last term in the last inequality.}
 On the bubble $B_k$, it holds
\begin{align*}
 & \int_{B_k} \rho_k \tilde u^{(N)} (\p_t w + \tilde u^{(N)}\p_x
  w)\mathrm{d}x \\
  &=\int_{B_k} \frac{m_k}{2R_k} \tilde u^{(N)} (\p_t w + \tilde u^{(N)}\p_x
    w)\mathrm{d}x \\
  &= m_k \dot c_k \p_t w(c_k) + m_k (|\dot c_k|^2 + \dfrac 1 3 |\dot
    R_k|^2)\p_x w(c_k) \\
  &\quad + O\bigg(m_k\|w\|_{C^2}(1+|\dot c_k|+|\dot R_k| )(|\dot{c}_k| + |\dot{R}_k|)| R_k|\bigg).
\end{align*}
Gathering the fluid and gas expressions yields
\begin{align*}
  & -\int_0^T \int_{\Omega}
    \rho^{(N)}\tu^{(N)}
  (\p_t w+ \tu^{(N)}\p_x w) \mathrm{d}x \mathrm{d}t\\
  &=-\int_0^T \int_\Omega
    (\chi^{(N)}\tilde\Sigma_f^{(N)}+(1-\chi^{(N)})\tilde\Sigma_g^{(N)})\p_x
    w \mathrm{d}x \mathrm{d}t + O(N^{-1/2}).
\end{align*}
Using the strong convergence of $\tu^{(N)}$ and the weak convergence
of $\rho^{(N)}$, obtained by Proposition \ref{prop_bbar} with
$b(z,\xi,\nu)=\xi$, the left-hand side tends to
\begin{equation*}
  -\int_0^T \int_\Omega \bar \rho\bar u(\p_tw + \bar u \p_x
  w)\mathrm{d}x \mathrm{d}t.
\end{equation*}
The limit of the right-hand side is deduced from Proposition
\ref{prop_compensated}. One ends up with the desired momentum equation
\eqref{eq_moment_asym}.

It remains to close the system by determining relations between the tensors $\bar
\Sigma_f$ and $\bar \Sigma_g$ and the other quantities. To do so, we
prove that $\bar\Sigma_f$ and $\bar \Sigma_g$ are solutions of a $2\times
2$ system.

First observe that
\begin{equation*}
  \p_x u^{(N)} = \chi^{(N)} \dfrac{\tilde \Sigma_f^{(N)}+
    \mathrm{p}_f(\rho_f^{(N)})}{\mu_f}+
  (1-\chi^{(N)})  \dfrac{\tilde \Sigma_g^{(N)}+
    \mathrm{p}_g(\rho_g^{(N)})+ F_s/2}{\mu_g}.
\end{equation*}
The different results of convergence given in Section
\ref{sec_homog-probl},  especially Proposition
\ref{prop_compensated}, allow to pass to the limit in both sides of
the equation.
In particular, in the right-hand side, the definition of the surface
tension yields
\[
  (1-\chi^{(N)}) \frac{F_s}{2} = \sum_{k=1}^N\dfrac{\bar
  \gamma_s}{2NR_k}\mathbb 1_{B_k} =  \bar{\gamma}_s \tilde{f}_g^{(N)} (1-\chi^{(N)}) \rightharpoonup\bar \gamma_s \bar{\alpha}_g \bar{f}_g. 
\]

Eventually, it holds
\begin{align*}
  \p_x \bar u =  \dfrac{\bar \alpha_f}{\mu_f}\big[ \bar \Sigma_f +
  \mathrm{p}_f(\bar \rho_f)\big] + \dfrac{\bar \alpha_g}{\mu_g}\big[ \bar \Sigma_g+
  \mathrm{p}_g(\bar \rho_g)+ \bar \gamma_s \bar f_g\big].
\end{align*}
The second equation is obtained while studying the difference $\bar
\Sigma_f-\bar \Sigma_g$.
Using the definition \eqref{eq_tSigmaf} of the extended tensor $\tilde
\Sigma_f$ and the Newton laws \eqref{eq_droplet_newton_1D_C} and
\eqref{eq_droplet_newton_1D_R} for the bubbles, it holds
\begin{align*}
  \tilde \Sigma_f^{(N)} &= \dfrac{\Sigma_f(x_k^-)+
                          \Sigma_f(x_k^+)}{2}- \dfrac{\Sigma_f(x_k^-)-
                          \Sigma_f(x_k^+)}{2R_k^{(N)}}(c-c_k^{(N)})\\
                        &=\frac{m_k}{6}\ddot R_k + \Sigma_k +
                          \dfrac{m_k\ddot c_k}{2R_k}(x-c_k).
\end{align*}
Since $\tilde\Sigma_g^{(N)}=\Sigma_k$ on the bubbles domain $B_k$, one
has
\begin{equation*}
  (1-\chi^{(N)})(\tilde \Sigma_f^{(N)}-\tilde \Sigma_g^{(N)})= 
    \sum_{k=1}^N \left(\frac{m_k}{6}\ddot R_k +
    \dfrac{m_k\ddot c_k}{2R_k}(x-c_k)\right)\mathbb 1_{B_k}.
\end{equation*}
Proposition \ref{prop_compensated} applies to the left-hand side:
\begin{equation*}
    (1-\chi^{(N)})(\tilde \Sigma_f^{(N)}-\tilde
    \Sigma_g^{(N)})\rightharpoonup (1-\bar \alpha_f) (\bar \Sigma_f
    -\bar \Sigma_g),
\end{equation*}
in the sense of distributions.
The right-hand side can be proved to tend to zero in $L^2((0,T)\times
\Omega))$ since
\begin{align*}
  \bigg\| \sum_{k=1}^N \left(\frac{m_k}{6}\ddot R_k +
    \dfrac{m_k\ddot c_k}{2R_k}(x-c_k)\right)\mathbb
  1_{B_k}\bigg\|_{L^2(\Omega)}^2
  &\leq \bigg\| \frac 1 2 \sum_{k=1}^N m_k\left(|\ddot R_k |+
    |\ddot c_k|\right)\mathbb
    1_{B_k}\bigg\|_{L^2(\Omega)}^2\\
  & \leq\sum_{k=1}^N \int_{B_k}m_k^2 (|\ddot R_k |^2+|\ddot
    c_k|^2)\mathrm{d}x\\
  &\leq \frac{2}{N^2d_\infty M_\infty}\sum_{k=1}^N m_k (|\ddot R_k |^2+|\ddot
    c_k|^2)\\
  &\leq \frac{2K_\infty}{N^2d_\infty M_\infty},
\end{align*}
thanks to \ref{it_bound_mk0}, \ref{it_bound_R} and
\ref{it_H1_bound_tensors}.
Recalling the second part of \eqref{eq_alphasup}, one recovers \eqref{eq_id_sigma}.
\end{proof}

\section{An alternative description of the bubble dynamics}

 In order to describe the dynamics of the bubbles,  an alternative approach is to introduce the distribution function in position and (scaled) radius
 
\begin{equation}
  \label{eq_StN}
  S_t^{(N)} = \dfrac 1 N \sum_{k=1}^N \delta_{c_k(t), NR_k(t)},
\end{equation}
which is a measure on $\Omega\times (0,\infty)$.  According to
\ref{it_bound_R}, one has
\begin{equation*}
  \text{supp}(S_t^{(N)})\subset \bar \Omega \times [d_\infty,
  1/d_\infty],
  \quad \forall\, t \in (0,T).
\end{equation*}

\begin{Prop}
  \label{prop_stN_equicont}
  For all $\beta\in C(\bar \Omega \times [d_\infty, 1/d_\infty])$, the
  distribution function $ S_t^{(N)}$ satisfies
  \begin{equation}
    \label{eq_dtSN}
    \p_t \langle S_t^{(N)},\beta\rangle - \langle S_t^{(N)},\tilde
    u^{(N)}(x) \p_x\beta \rangle 
    - \dfrac{1}{\mu_g}\langle S_t^{(N)}, \big((\tilde \Sigma_g^{(N)}(x)+
    \mathrm{p}_g(\tilde\rho_g^{(N)})) r + \bar\gamma_s/2 \big) \p_r\beta \rangle = 0.
  \end{equation}
  Moreover, the sequence of applications $t\mapsto S_t^{(N)}$ is
  compact in
  $C([0,T]; \mathbb P(\bar \Omega \times [d_\infty, 1/d_\infty]))$. As
  a consequence, there exists
  $\bS_g\in C([0,T]; \mathbb P(\bar \Omega \times [d_\infty,
  1/d_\infty]))$ such that, up to the extraction of a subsequence,
\begin{equation*}
  \langle S^{(N)},\beta\rangle \to \langle \bar S_g,\beta\rangle,
  \quad \text{in } C([0,T]),
\end{equation*}
for any $\beta\in C(\bar \Omega \times [d_\infty, 1/d_\infty])$.
\end{Prop}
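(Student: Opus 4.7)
The plan is to establish \eqref{eq_dtSN} by directly differentiating the atomic measure in time, extract from it a uniform-in-$N$ Hölder-$1/2$ bound on $t \mapsto \langle S_t^{(N)},\beta\rangle$ for $\beta \in C^1(K)$ with $K := \bar\Omega \times [d_\infty, 1/d_\infty]$, and conclude by Arzelà--Ascoli in the compact metric space $C([0,T];\mathbb P(K))$.

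For the first step, pick $\beta \in C^1(K)$ and compute
\[
\tfrac{d}{dt}\langle S_t^{(N)},\beta\rangle = \tfrac{1}{N}\sum_{k=1}^N \bigl[\dot c_k\,\p_x\beta(c_k,NR_k) + N\dot R_k\,\p_r\beta(c_k,NR_k)\bigr].
\]
Formula \eqref{eq_tilde_udfN_PkN} evaluated at $x=c_k$ gives $\tu^{(N)}(t,c_k) = \dot c_k$. Next, rewriting \eqref{eq_droplet_tensor_sigmak_bis} as $\mu_g \dot R_k = R_k(\Sigma_k + \mathrm{p}_g(\rho_k)) + R_k F_s/2$ and using $R_k F_s = \gamma_s = \bar\gamma_s/N$ yields
\[
N\dot R_k = \tfrac{1}{\mu_g}\bigl[(\Sigma_k + \mathrm{p}_g(\rho_k))(NR_k) + \tfrac{\bar\gamma_s}{2}\bigr].
\]
On each $B_k$, by construction, $\tSigma_g^{(N)}\equiv \Sigma_k$ (see \eqref{eq_tSigmag}) and $\tilde\rho_g^{(N)}\equiv \rho_k = m_k/(2R_k)$ (see \eqref{eq_def_dens&cv} and the identification in \textbf{Proposition \ref{prop_tilde_rhogN_fgN}}); hence these quantities may be replaced by $\tSigma_g^{(N)}(c_k)$ and $\tilde\rho_g^{(N)}(c_k)$. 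Repackaging with $r = NR_k$ then produces exactly \eqref{eq_dtSN}.

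For the second step, the Sobolev embedding $H^1(\Omega) \hookrightarrow L^\infty(\Omega)$ combined with the uniform bounds \ref{it_bound_H1}--\ref{it_H1_bound_tensors} from \textbf{Corollary \ref{cor_existence}} and the uniform $L^\infty_t H^1_x$ control on $\tilde\rho_g^{(N)}$ underlying the proof of \textbf{Proposition \ref{prop_tilde_rhogN_fgN}} produce a function $\Phi^{(N)}$, bounded in $L^2(0,T)$ independently of $N$, such that for every $\beta\in C^1(K)$
\[
\bigl|\tfrac{d}{dt}\langle S_t^{(N)},\beta\rangle\bigr| \leq \Phi^{(N)}(t)\,\|\beta\|_{C^1(K)},
\]
the factor $r \in [d_\infty, 1/d_\infty]$ on the support of $S_t^{(N)}$ being ensured by \ref{it_bound_R}. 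Cauchy--Schwarz in time then gives the uniform Hölder estimate $|\langle S_t^{(N)},\beta\rangle - \langle S_s^{(N)},\beta\rangle| \leq C(\beta)\sqrt{|t-s|}$.

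For the third step, $K$ being a compact metric space, $\mathbb P(K)$ endowed with the weak-$*$ topology is a compact metrisable space whose metric is induced by duality against any countable dense subfamily $\{\beta_j\}\subset C^1(K)$ of $C(K)$. The equi-Hölder estimate above applied term-by-term to this family yields equicontinuity of $\{t\mapsto S_t^{(N)}\}_N$ as maps into $\mathbb P(K)$; combined with the fact that $\mathbb P(K)$ is itself compact, Arzelà--Ascoli extracts a subsequence converging in $C([0,T];\mathbb P(K))$ to some $\bS_g$. The main bookkeeping point, on which the whole argument hinges, is the interplay between the scaling $\gamma_s\sim N^{-1}$ and the rescaling $r = NR_k$ in the first step: it is precisely what makes the surface tension contribution survive the macroscopic limit as the constant term $\bar\gamma_s/2$.
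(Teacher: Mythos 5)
Your proof is correct and follows essentially the same route as the paper: differentiate the empirical measure in time, substitute the bubble stress relation \eqref{eq_droplet_tensor_sigmak} (identifying $\Sigma_k,\rho_k$ with $\tSigma_g^{(N)}(c_k),\tilde\rho_g^{(N)}(c_k)$ on $B_k$), bound $\frac{d}{dt}\langle S_t^{(N)},\beta\rangle$ uniformly in $L^2(0,T)$, and conclude by Arzelà--Ascoli in $C([0,T];\mathbb P(K))$. The paper merely performs the bounding step before substituting the stress relation (invoking Lemma \ref{lem_RkpsRk} on $\max_k|\dot R_k/R_k|$ directly) and leaves the Arzelà--Ascoli argument implicit, whereas you make the Hölder-$1/2$ estimate and the metrizability of $\mathbb P(K)$ explicit.
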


\begin{proof}
  Let us first prove that, for any
  $\beta\in C^{1}(\bar \Omega \times [d_\infty, 1/d_\infty])$, the
  sequence $\beta^{(N)}:t\mapsto \langle S_t^{(N)},\beta\rangle$ is
  uniformly equicontinuous. The definition of $\beta^{(N)}$ and~\eqref{eq_StN}
  enable to write
  \begin{equation*}
    \beta^{(N)}(t) = \dfrac 1 N \sum_{k=1}^N \beta (c_k(t), NR_k(t)).
  \end{equation*}
  For legibility, we drop the exponent $(N)$ in $c_k$ and $R_k$ 
  here and in what remains of the proof.
  By construction, $c_k$ and $R_k$ belong to $H^2(0,T)$
  and thus are in $C^1([0,T])$. It follows that
  $\beta^{(N)}\in C^1([0,T])$, and
  \begin{equation}
    \label{eq_dtbetaN}
    \begin{aligned}
      \dfrac{\mathrm{d}}{\mathrm{dt}}\beta^{(N)}(t) &= \dfrac 1 N
      \sum_{k=1}^N \left( \dot c_k \p_x \beta(c_k, NR_k) + N\dot R_k
        \p_r \beta(c_k,NR_k)\right)\\
      &= \dfrac 1 N \sum_{k=1}^N \left(  \tilde u_f^{(N)}(c_k) \p_x
        \beta(c_k, NR_k)
        + \dfrac{\dot R_k}{R_k}NR_k\p_r\beta(c_k, NR_k)\right).
    \end{aligned}
  \end{equation}
  Recall that, by Corollary \ref{corollaire_bound_energy},
  $\tilde u_f^{(N)}$ is bounded in $L^2((0,T);H^1(\Omega))$ and then
  in $L^2((0,T);C(\bar \Omega))$. Moreover \ref{it_bound_R} ensures
  that $NR_k$ is bounded by $d_\infty$. From Lemma
  \ref{lem_RkpsRk}, $\dfrac{\mathrm{d}}{\mathrm{dt}}\beta^{(N)}$ is
  bounded in $L^2(0,T)$. Thus $\beta^{(N)}$ is bounded in $H^1(0,T)$
  and then uniformly equicontinuous. The compactness result and the
  existence of $\bS_g$ is then straightforward.

  It remains to check that $S_t^{(N)}$ verifies
  equation~\eqref{eq_StN}. This comes directly from~\eqref{eq_dtbetaN}
  where the term $\dot R_k/R_k$ is replaced
  using~\eqref{eq_droplet_tensor_sigmak}:
  \begin{align*}
    \frac{\dot R_k}{R_k} &= \frac{1}{\mu_g} \bigg( \Sigma_k +
    \mathrm{p}_g(\rho_k) + \frac{\bar\gamma_s}{2}\frac{1}{NR_k} \bigg) \\
                         &= \frac{1}{\mu_g} \bigg( \tilde \Sigma_g(c_k) +
    \mathrm{p}_g(\tilde \rho_g(c_k)) + \frac{\bar\gamma_s}{2}\frac{1}{NR_k} \bigg) .
  \end{align*}
\end{proof}

Actually, the dependence of the measures $\bS_{g,t}$ with respect to
the space variable $x$ can be precised:

\begin{Prop}
  \label{prop_barSt_density}
  For any $\beta\in C^\infty(\mathbb R^+)$, there exists
  $\bar S_\beta\in L^\infty ((0,T);L^\infty(\Omega))$ such that,
  for all $ \Phi\in C_c^\infty((0,T)\times \Omega)$ and all $t\in(0,T)$,
  \begin{equation}
    \label{eq_Sbeta_density}
    \langle \bar S_{g,t} , \Phi(t,\cdot)\otimes\beta\rangle
      = \int_\Omega \bar S_\beta(t,x)\Phi(t,x) \mathrm{d}x. 
  \end{equation}
  In other words, we have:
  \begin{equation*}
    \bar S_\beta(t,\cdot) = \int_{\mathbb R^+} \beta(r)\bar S_{g,t}
    (\cdot,\mathrm{d}r) \in L^\infty ((0,T) \times \Omega).
  \end{equation*}
\end{Prop}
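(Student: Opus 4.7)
The plan is to show that for each $t \in (0,T)$, the linear form $\varphi \mapsto \langle \bar S_{g,t}, \varphi \otimes \beta\rangle$ on test functions $\varphi$ extends to a continuous form on $L^1(\Omega)$ whose norm is bounded uniformly in $t$, and then invoke the duality $L^1(\Omega)^\ast = L^\infty(\Omega)$ to obtain the representing density $\bar S_\beta(t,\cdot)$.

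The key input is the centre-separation provided by \ref{it_bound_R}--\ref{it_bound_F}, which gives, upon setting $c_0^{(N)} = -1$ and $c_{N+1}^{(N)} = 1$,
\[
  \frac{d_\infty}{N} \leq c_{k+1}^{(N)}(t) - c_k^{(N)}(t) \leq \frac{3}{d_\infty N}, \qquad k = 0,\ldots,N.
\]
For $\varphi \in C^1_c(\Omega)$, I would substitute $\frac{1}{N} \leq (c_{k+1}^{(N)} - c_k^{(N)})/d_\infty$ in the definition of $S_t^{(N)}$ to write
\[
  |\langle S_t^{(N)}, \varphi \otimes \beta\rangle| \leq \frac{\|\beta\|_{L^\infty[d_\infty,1/d_\infty]}}{d_\infty} \sum_{k=1}^N (c_{k+1}^{(N)} - c_k^{(N)})\,|\varphi(c_k^{(N)})|.
\]
The right-hand side is a Riemann sum of mesh size $O(1/N)$, so I can compare it with $\|\varphi\|_{L^1(\Omega)}$ up to an error bounded by $C\|\varphi'\|_{L^\infty} / N$. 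Passing to the limit using the uniform-in-$t$ convergence stated in Proposition~\ref{prop_stN_equicont}, the error vanishes and I obtain
\[
  |\langle \bar S_{g,t}, \varphi \otimes \beta\rangle| \leq \frac{\|\beta\|_{L^\infty}}{d_\infty}\,\|\varphi\|_{L^1(\Omega)}, \qquad \forall\, t \in (0,T).
\]

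By density of $C^1_c(\Omega)$ in $L^1(\Omega)$, this inequality extends the pairing with $\beta$ to a continuous linear functional on $L^1(\Omega)$, and Riesz representation yields $\bar S_\beta(t,\cdot) \in L^\infty(\Omega)$ satisfying \eqref{eq_Sbeta_density} with the uniform bound $\|\bar S_\beta(t,\cdot)\|_{L^\infty(\Omega)} \leq \|\beta\|_{L^\infty}/d_\infty$. To promote this family to an element of $L^\infty((0,T) \times \Omega)$, I would verify weak-$\ast$ measurability of $t \mapsto \bar S_\beta(t,\cdot)$: against any $\varphi \in C_c^\infty(\Omega)$ the map $t \mapsto \int_\Omega \bar S_\beta(t,x)\varphi(x)\,\mathrm{d}x = \langle \bar S_{g,t}, \varphi \otimes \beta\rangle$ is continuous by Proposition~\ref{prop_stN_equicont}; applying this on a countable dense family of test functions and combining it with the uniform bound identifies $\bar S_\beta$ with an element of $L^\infty((0,T); L^\infty(\Omega)) = L^\infty((0,T) \times \Omega)$.

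The main obstacle is to make the Riemann-sum comparison sharp enough that the limiting estimate retains the correct $L^1$-scaling in $\varphi$: the endpoint contributions (near $\pm 1$) and the cell discrepancies $|\varphi(c_k^{(N)}) - \varphi(x)|$ must be absorbed in a term vanishing as $N \to \infty$, which works cleanly thanks to the compact support of $\varphi$ and the two-sided mesh bound above. Once the pointwise-in-$t$ representation is obtained, extending from separated test functions $\varphi \otimes \beta$ to evaluations at time slices $\Phi(t,\cdot)$ of a general $\Phi \in C_c^\infty((0,T) \times \Omega)$ is immediate: fix $t$ and apply \eqref{eq_Sbeta_density} to $\varphi = \Phi(t,\cdot)$.
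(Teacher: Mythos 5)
Your proof is correct and takes a genuinely different route from the paper. The paper's argument is \emph{constructive}: it smears each Dirac mass $\frac1N\delta_{c_k(t)}$ over the corresponding bubble interval $B_k(t)$, thereby defining an explicit sequence
\[
S_\beta^{(N)}(t,x) \;=\; \sum_{k=1}^N \frac{\beta(NR_k(t))}{2NR_k(t)}\,\mathbb{1}_{B_k(t)}(x),
\]
shows $\langle S_{g,t}^{(N)},\phi(t,\cdot)\otimes\beta\rangle - \int_\Omega S_\beta^{(N)}\phi = O(\max_k R_k)$, bounds $S_\beta^{(N)}$ uniformly in $L^\infty((0,T)\times\Omega)$ via \ref{it_bound_R}, and passes to a weak-$\ast$ limit. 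You instead prove an \emph{a priori duality estimate} on the limit measure itself: the centre-spacing lower bound from \ref{it_bound_R}--\ref{it_bound_F} turns the Dirac comb into a quasi-uniform partition, the Riemann-sum comparison gives $|\langle S_t^{(N)},\varphi\otimes\beta\rangle|\le d_\infty^{-1}\|\beta\|_{L^\infty}\|\varphi\|_{L^1(\Omega)}+O(N^{-1})$, and after letting $N\to\infty$ (pointwise in $t$, via the $C([0,T])$ convergence of Proposition~\ref{prop_stN_equicont}) the representation follows from $L^1(\Omega)^\ast=L^\infty(\Omega)$. Your route has a small technical advantage: it yields \eqref{eq_Sbeta_density} for \emph{every} $t\in(0,T)$ directly, whereas the paper's weak-$\ast$ convergence in $L^\infty((0,T)\times\Omega)$ a priori identifies the two sides only after integration in $t$ (hence a.e.\ in $t$), and also quietly requires a further subsequence extraction to align with the one of Proposition~\ref{prop_stN_equicont}. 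The paper's construction, on the other hand, exhibits an explicit approximating density, which is a piece of information your argument does not produce. Both are valid, and the uniform bound you obtain, $\|\bar S_\beta\|_{L^\infty}\le \|\beta\|_{L^\infty}/d_\infty$, is consistent with the one implicit in the paper.
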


\begin{proof}
  Let $\beta\in C^\infty(\mathbb{R}^+)$ and $\phi\in
  C_c^\infty((0,T)\times\Omega)$. One has for every $t\in(0,T)$
  \begin{align*}
    \langle S_{g,t}^{(N)} , \phi(t,\cdot) \otimes \beta \rangle & = \frac1N
                                                                  \sum_{k=1}^N \beta(NR_k(t)) \phi(t,c_k(t)) \\ 
                                                                & = \sum_{k=1}^N \int_{B_k} \dfrac{1}{2NR_k} \beta(NR_k) \phi(t,x){\rm
                                                                  d}x \\ 
                                                                & \quad - \sum_{k=1}^N \int_{B_k} \dfrac{1}{2NR_k} \beta(NR_k) ( \phi(t,x) - \phi(t,c_k)).
  \end{align*}
  The second term of the right-hand side can be bounded by
  \begin{equation*}
    \left(\max_{k=1,\ldots,N} R_k \right) \|\beta\|_{L^\infty([d_{\infty},1/d_{\infty}])}
    \|\dv_x\phi\|_{L^\infty((0,T);L^\infty(\Omega))},
  \end{equation*}
  and then tends to $0$ when $N\to+\infty$ (see \ref{it_bound_R}). The first term can be
  written as
  \begin{equation*}
    \int_\Omega S_\beta^{(N)}(t,x)\phi(t,x) \mathrm{d}x
  \end{equation*}
  with
  \[
    S_{\beta}^{(N)}(t,x) = \sum_{k=1}^N\dfrac{1}{2NR_k(t)} \beta(NR_k(t))
    \mathbb{1}_{B_k(t)}(x)
  \]
  which provides a bounded sequence in $L^\infty((0,T)\times\Omega)$,
  by \ref{it_bound_R}. Therefore, there exists $\bar
  S_\beta\in L^\infty((0,T)\times\Omega)$ such that, up to the
  extraction of a subsequence,
  \begin{equation*}
    S_{\beta}^{(N)} \rightharpoonup \bar S_\beta, \quad   \text{in
    } L^\infty((0,T)\times \Omega)-w^\star \text{ when } N\to+\infty. 
  \end{equation*}
  Then, letting $N\to\infty$ in the previous equality
  yields~\eqref{eq_Sbeta_density}.
\end{proof}
 

We obtain then the following limiting equation for $\bar{S}_{g,t}:$
\begin{Prop}
  The limit $\bar S_{g,t}$ defined in Proposition
  \ref{prop_stN_equicont} satisfies the equation
  \begin{equation}
  \label{eq_dtSbar}
  \p_t \bar S_{g,t} + \p_x (\bar S_{g,t} \bar u)
  + \dfrac{1}{\mu_g} \p_r((r (\bar \Sigma_g+\mathrm{p}_g(\bar\rho_g))
  + \bar\gamma_s/2) \bar S_{g,t})=0.
\end{equation}
\end{Prop}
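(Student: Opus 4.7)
My plan is to derive \eqref{eq_dtSbar} by passing to the limit $N\to\infty$ in the microscopic identity \eqref{eq_dtSN} tested against separable functions $\beta(x,r)=\phi(x)\zeta(r)$, with $\phi\in C_c^\infty(\Omega)$ and $\zeta\in C^\infty(\mathbb R^+)$; by density these determine the announced equation as a distributional statement on $(0,T)\times\Omega\times\mathbb R^+$. After multiplying by a cutoff $\psi\in C_c^\infty((0,T))$ and integrating, the terms that depend linearly on $S^{(N)}$ (the time derivative and the $\bar\gamma_s/2$ surface-tension contribution) pass to the limit at once thanks to the $C([0,T];\mathbb P)$-convergence of $S^{(N)}$ from Proposition~\ref{prop_stN_equicont}.

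For the convection term, I would exploit that the extended velocity $\tilde u^{(N)}$ is affine on every bubble $B_k$ with $\tilde u^{(N)}(c_k)=\dot c_k$ equal to its average over $B_k$; a first-order Taylor expansion of $\p_x\phi$ around $c_k$ then rewrites
\[
\langle S_t^{(N)},\, \tilde u^{(N)}\,\p_x\phi\otimes\zeta\rangle
= \int_\Omega S_\zeta^{(N)}(t,x)\,\tilde u^{(N)}(t,x)\,\p_x\phi(x)\,\mathrm{d}x + \varepsilon_N(t),
\]
where $S_\zeta^{(N)}$ is the density from Proposition~\ref{prop_barSt_density} and $\varepsilon_N\to 0$ in $L^1(0,T)$ by bounding the remainder with $\max_k R_k^{(N)}\to 0$ (via \ref{it_bound_R}) and the $L^2_tH^1_x$-control of $\tilde u^{(N)}$ from Corollary~\ref{corollaire_bound_energy}. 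Combining the strong convergence $\tilde u^{(N)}\to\bar u$ of Proposition~\ref{prop_CV_uf} with the weak-$\ast$ convergence $S_\zeta^{(N)}\rightharpoonup\bar S_\zeta$ of Proposition~\ref{prop_barSt_density} produces the limit $\langle\bar S_{g,t},\bar u\,\p_x\phi\otimes\zeta\rangle$. The pressure contribution in the radial flux is treated identically, replacing the role of $\tilde u^{(N)}$ by $\mathrm{p}_g(\tilde\rho_g^{(N)})$ and invoking the strong $L^2$-convergence of $\tilde\rho_g^{(N)}$ from Proposition~\ref{prop_tilde_rhogN_fgN}.

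The main obstacle is the stress contribution $\langle S_t^{(N)},\tilde\Sigma_g^{(N)}\,r\,\phi\otimes\zeta'\rangle$, since $\tilde\Sigma_g^{(N)}$ converges only weakly in $L^2((0,T);H^1(\Omega))$. After the same reduction, it equals $\int_\Omega S_\eta^{(N)}\tilde\Sigma_g^{(N)}\phi\,\mathrm{d}x+o(1)$ with $\eta(r)=r\zeta'(r)$, and requires a compensated-compactness argument in the spirit of Proposition~\ref{prop_compensated}. The key observation is that on $B_k$ one has $NR_k^{(N)}=1/(2f_g^{(N)})$ and $f_g^{(N)}$ takes values in the compact interval $[d_\infty/2,\,1/(2d_\infty)]$ by virtue of \ref{it_bound_R}. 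Choosing $\zeta$ so that $\zeta'$ has compact support, the function
\[
b(z,\xi,\nu) := (1-z)\,\nu\,\eta\!\left(\tfrac{1}{2\nu}\right),
\]
extended smoothly by zero near $\nu=0$, belongs to $C^1([0,1]\times\mathbb R^+\times\mathbb R^+)$ and satisfies $b(\chi^{(N)},\rho^{(N)},f_g^{(N)})=S_\eta^{(N)}$. Proposition~\ref{prop_compensated} then yields $\tilde\Sigma_g^{(N)}S_\eta^{(N)}\rightharpoonup\bar\Sigma_g\,\bar S_\eta$ in $\mathcal D'((0,T)\times\Omega)$, and identifying $\bar S_\eta=\int_{\mathbb R^+}r\zeta'(r)\,\bar S_{g,t}(\cdot,\mathrm{d}r)$ via Proposition~\ref{prop_barSt_density} delivers the stress term of \eqref{eq_dtSbar}. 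Assembling the four contributions recovers the announced equation.
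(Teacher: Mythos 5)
Your proof is correct and follows the same overall strategy as the paper's: start from \eqref{eq_dtSN} tested against tensorised functions $\phi\otimes\zeta$, pass to the limit term by term, and invoke the compensated-compactness device (Proposition~\ref{prop_compensated}) only for the stress term, identifying the weak limit via Proposition~\ref{prop_barSt_density}. Your identification $b(\chi^{(N)},\rho^{(N)},f_g^{(N)})=S_\eta^{(N)}$ with $\eta(r)=r\zeta'(r)$ and $b(z,\xi,\nu)=(1-z)\nu\eta(1/(2\nu))$ is a clean reformulation of the paper's choice $b(0,\cdot,\nu)=\beta_r'(1/(2\nu))$ together with the factor $1/2$ the paper carries separately, and the support restriction on $\zeta'$ costs nothing since $\bar S_{g,t}$ is supported in $\bar\Omega\times[d_\infty,1/d_\infty]$.

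The one place where you take a genuinely different route is the convection (and pressure) term. You first translate the pairing against the discrete measure $S_t^{(N)}$ into an integral against the bounded density $S_\zeta^{(N)}$, using a Taylor expansion around $c_k$ and the affine structure of $\tilde u^{(N)}$ on the bubbles, and then conclude with the weak-$*$ $L^\infty$ versus strong $L^2$ duality. The paper instead stays at the measure level: it upgrades the strong $L^2((0,T);L^2(\Omega))$ convergence of $\tilde u^{(N)}$ to strong $L^2((0,T);C(\bar\Omega))$ convergence by interpolation against the uniform $L^2_tH^1_x$ bound, and then pairs directly with the $C([0,T];\mathbb P)$ convergence of $S_t^{(N)}$, which avoids the Taylor remainder bookkeeping entirely. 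Both routes are valid and rely on the same underlying bounds (\ref{it_bound_R} and Corollary~\ref{corollaire_bound_energy}); the paper's is marginally shorter, while yours is more uniform across the four terms because it systematically reduces everything to the density representation of Proposition~\ref{prop_barSt_density}. For the pressure term the paper applies Proposition~\ref{prop_bbar} with $b(0,\xi,\nu)=\mathrm{p}_g(\xi)\beta_r'(1/(2\nu))$ rather than the weak-$*$ versus strong factorisation you propose; again both are correct and interchangeable.
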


\begin{proof}
  To obtain a time-evolution PDE for $\bar S_{g,t}$, we go back to Equation~\eqref{eq_dtSN}
  with a tensorised test function $\beta(x,r)=\beta_x(x)\beta_r(r)$,
  which writes
  \begin{align*}
    \p_t \langle S_t^{(N)},\beta\rangle &= \langle S_t^{(N)},\tilde
    u^{(N)}(x) \beta_x' \otimes \beta_r \rangle \\
    &\quad + \dfrac{1}{\mu_g}\langle S_t^{(N)}, 
    r\tilde \Sigma_g^{(N)}(x) \beta_x \otimes \beta_r' \rangle \\
    &\quad + \dfrac{1}{\mu_g}\langle S_t^{(N)},
      r\mathrm{p}_g(\tilde\rho_g^{(N)}) \beta_x \otimes \beta_r'  \rangle \\
     &\quad + \dfrac{ \bar\gamma_s}{2\mu_g}\langle S_t^{(N)}, 
       \beta_x \otimes \beta_r' \rangle .
  \end{align*}
  The first term of the right-hand side can be dealt using the strong
  convergence of $(\tilde u^{(N)})_N$ in
  $L^2((0,T),L^2(\Omega))$. Since it is bounded in
  $L^2((0,T),H^1(\Omega))$, the sequence $(\tilde u^{(N)})_N$ converges also in
  $L^2((0,T),C(\bar\Omega))$ by interpolation. The weak convergence of
  $(S_t^{(N)})_N$ in
  $C([0,T],\mathbb{P}(\bar\Omega\times\mathbb{R}^+))$ together with
  this strong convergence gives
  \begin{equation*}
    \langle S_t^{(N)} , \tilde u^{(N)} \beta_x' \otimes \beta_r
    \rangle \longrightarrow     \langle \bar S_{g,t} , \bar u \beta_x' \otimes \beta_r
    \rangle .
  \end{equation*}
  Since $\tilde\Sigma_g$ is uniformly bounded in
  $L^2((0,T),H^1(\Omega))\subset L^2((0,T),C^{0,1/2}(\bar\Omega))$,
  the second term writes
  \begin{align*}
    \langle S_t^{(N)}, 
    r\tilde \Sigma_g^{(N)}(x) \beta_x \otimes \beta_r' \rangle &=
                                                                 \frac1N
                                                                 \sum_{k=1}^N 
                                                                 \tilde
                                                                 \Sigma_g^{(N)}
                                                                 (c_k)
                                                                 NR_k
                                                                 \beta_x(c_k)
                                                                 \beta_r'(NR_k) \\
    &= \frac12  \sum_{k=1}^N  \int_{B_k}  \tilde  \Sigma_g^{(N)}(x)  \beta_x(x)
      \beta_r'(NR_k)  \mathrm{d}x + \dfrac{C_{\beta} \|\tilde{\Sigma}_g\|_{H^1(\Omega)}}{\sqrt{N}}\\
    &= \frac12  \int_{\Omega}  \tilde  \Sigma_g^{(N)}(x)  \beta_x(x)
      b^{(N)}(x) \mathrm{d}x +\dfrac{ C_{\beta} \|\tilde{\Sigma}_g\|_{H^1(\Omega)}}{\sqrt{N}}
  \end{align*}
  where $b^{(N)}$ is defined by Equation \eqref{eq_bN0}, with
  \begin{equation*}
    b(1,\cdot,\cdot) = 0, \quad b(0,\cdot,\nu) = \beta_r'(1/(2\nu)) .
  \end{equation*}
  Indeed, this provides
  \begin{equation*}
    b^{(N)} =
    \begin{cases}
      0 &\text{in } \mathcal{F}^{(N)}, \\
      \beta_r'(1/(2f_k)) &\text{in } B_k.
    \end{cases}
  \end{equation*}
    Using successively Propositions \ref{prop_compensated},
    \ref{prop_bbar} and \ref{prop_barSt_density}, we obtain
    \begin{align*}
      \langle S_t^{(N)}, 
    r\tilde \Sigma_g^{(N)}(x) \beta_x \otimes \beta_r' \rangle
      \longrightarrow &\frac12 \int_\Omega \bar \Sigma_g(x) \bar b(x)
                        \beta_x(x) \mathrm{d}x \\
      &=\frac12 \int_\Omega \bar \Sigma_g(x) \bigg[
        b(1,\bar\rho_g,0)\bar\alpha_f \\
      &\quad + \int_{\mathbb{R}^+} (2r)
        b(0,\bar \rho_g,1/(2r)) \bar S_{g,t} (\mathrm{d}r) \bigg]
                        \beta_x(x) \mathrm{d}x \\
      &= \frac12 \int_\Omega \bar \Sigma_g(x) \bigg[ \int_{\mathbb{R}^+} (2r)
        \beta_r'(r) \bar S_{g,t}(\mathrm{d}r) \bigg]
                        \beta_x(x) \mathrm{d}x \\
      &= \langle \bar S_{g,t}, r \bar  \Sigma_g \beta_x \otimes
        \beta_r'\rangle .
    \end{align*}
    For the third term, we proceed similarly, defining
      \begin{equation*}
        b(1,\cdot,\cdot) = 0, \quad b(0,\xi,\nu) = \mathrm{p}_g(\xi)\beta_r'(1/(2\nu)),
      \end{equation*}
      so that
      \begin{equation*}
        \langle S_t^{(N)}, 
        r\mathrm{p}_g(\tilde\rho_g^{(N)}) \beta_x \otimes \beta_r' \rangle
      \longrightarrow 
        \langle \bar S_{g,t}, 
        r\mathrm{p}_g(\bar\rho_g) \beta_x \otimes \beta_r' \rangle .
      \end{equation*}
      The convergence of the last term is nothing else but the
      convergence of $S_t^{(N)}$.
\end{proof}

Observe that $\bar \alpha_g \bar f_g$ and $\bar \alpha_g$ are respectively the
zeroth and first moments of $\bar S_{g}$. Their PDE's, see Corollary
\ref{cor_vol_mass},
can be deduced from the Equation \eqref{eq_dtSbar}.

\newpage
\appendix
\section{Proof of Proposition \ref{prop_estimates}} 
\label{app_proof-prop-refpr}

In the whole section, we consider $T>0$ and
$(\rho_f,u_f,(c_k,R_k)_{k=1,\ldots,N})$ is classical solution to
\eqref{eq_fluid_mass}-\eqref{eq_droplet_tensor_sigmak} on $(0,T)$,
satisfying \ref{it_bound_R}-\ref{it_H1_bound_tensors}.

\medskip

To start with, we recall that {\bf Corollary \ref{corollaire_bound_energy}}
applies. With \ref{it_bound_R}, these estimates
yield:
\begin{align}
\label{it_bound_energy} 
&\displaystyle \int_{\mathcal F} \left( \rho_f
    \dfrac{|u_f|^2}{2}+q(\rho_f)\right)\mathrm{d}x
  + \dfrac{1}{2}\sum_{k=1}^N m_k \big(|\dot c_k|^2 + \dfrac 1 3
  |\dot R_k|^2\big) \\ \notag
  & \phantom{156890876543213456788654321145643}- \sum_{k=1}^N \kappa_k \ln(d_\infty
  N R_k)  \leq E_0,\\
    \label{eq_Q6}
 &   \int_0^T\Bigg[\bigg(\int_{\mathcal F} \mu_f |\p_x u_f|^2 \mathrm{d}x+
      \mu_g \sum_{k=1}^N \dfrac{|\dot R_k|^2}{R_k}\bigg)\Bigg]\mathrm{d}t\leq E_0 ,
\end{align}
with a constant $E_0$ depending only on the list of parameters \eqref{eq_listparametre}.

 \subsection*{Strict version ($\mathscr Q_1$) of \ref{it_bound_R}}
 Since $|a|-|b|\leq |a-b|$ and
 $(\alpha^2+\beta^2+\gamma^2)^{1/2} \leq \alpha
 +(\beta^2+\gamma^2)^{1/2}$ as soon as the $\alpha,\beta$ and $\gamma$
 are nonnegative, it follows from Corollary \ref{cor_est_sigmag}, \ref{it_bound_mk0}  
 and the bounds \ref{it_bound_R} on $R_k,$ \ref{it_bound_F} on $|\mathcal F_k|$ 
 that
\[
  \left|\dfrac{\dot R_k}{R_k} \right|\leq \dfrac{1}{\mu_g}\dfrac{1}{M_{\infty}NR_k} +
  \dfrac{C_1}{\mu_g} \|\tilde \Sigma_g\|_{H^1(\Omega)} +
  \dfrac{C_1}{\mu_g} \left( \dfrac{1}{\min_k |\mathcal F_k|}\sum_{k=1}^N (m_k)^2
    (|\ddot R_k|^2 + |\ddot c_k|^2)\right)^{1/2},
\]
and then:
\begin{equation}
  \label{eq_bound_R_2}
  \left|\dfrac{\dot R_k}{R_k} \right|\leq \dfrac{1}{\mu_g}\dfrac{1}{M_\infty d_\infty} +
  \dfrac{C_1}{\mu_g} \|\tilde \Sigma_g\|_{H^1(\Omega)} +
  \dfrac{C_1}{\mu_g} \left( \dfrac{1}{M_\infty d_\infty}\sum_{k=1}^N
    m_k (|\ddot R_k|^2 + |\ddot c_k|^2)\right)^{1/2}.
\end{equation}
The last term can be bounded by $\sqrt{K_\infty}$ according to \ref{it_H1_bound_tensors}.
Integrating on the time interval $(0,t)$, $t<T$, it yields
\begin{equation*}
  \label{eq:bound_R_3}
  \int_0^t \left|\dfrac{\dot R_k}{R_k} \right| \mathrm{d}t \leq
  \dfrac{1}{\mu_g M_\infty d_\infty}
 T +
    \dfrac{{C_2}}{\mu_g}{ \left( 1 + \dfrac{1}{\sqrt{M_\infty d_\infty}} \right)}\sqrt{K T}.
\end{equation*}
Considering a smaller time $T$, only depending on $\mu_g$, $d_\infty$,
$M_\infty$, $C_0$ and $K_\infty$, it holds
\begin{equation*}
  \label{eq:bound_R_4}
  \int_0^t \left|\dfrac{\dot R_k}{R_k}\right| \mathrm{d}t <\dfrac 1 2,
\end{equation*}
which gives 
\[
\dfrac{R_k^0}{2}<e^{-1/2}R_k^0<R_k<e^{1/2}R_k^0<2R_k^0. 
\]
Finally the Assumption
\ref{it_bound_R0} on the initial radii leads to the desired estimate
$(\mathscr Q_1)$. We note in passing that we obtained the following lemma:
  \begin{Lem}
    \label{lem_RkpsRk}
    There exists a constant $\tilde{K}$ depending on $\mu_g$, $d_\infty$,
$M_\infty$, $C_0$ and $K$, such that:
\[
\int_0^T \left(\max_{k=1,\ldots,N} \left| \dfrac{\dot R_k(t)}{R_k(t)}\right|\right)^2 {\rm d}t \leq \tilde{K}.
\]
  \end{Lem}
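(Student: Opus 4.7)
The plan is to observe that the desired bound is essentially already encoded in inequality \eqref{eq_bound_R_2}, which was obtained in the course of proving the strict version $(\mathscr Q_1)$ of \ref{it_bound_R}. The key remark is that the right-hand side of \eqref{eq_bound_R_2} is \emph{uniform in $k$}: the first term is a constant depending only on $\mu_g$, $M_\infty$, $d_\infty$, the second term involves $\|\tilde\Sigma_g\|_{H^1(\Omega)}$ without any $k$-dependence, and the third term is a sum over all $k$. Hence one can take the maximum over $k$ on the left without altering the right-hand side.

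Concretely, I would first fix $t \in (0,T)$ and note that \eqref{eq_bound_R_2} gives
\[
\max_{k=1,\ldots,N} \left|\dfrac{\dot R_k(t)}{R_k(t)}\right| \leq \dfrac{1}{\mu_g M_\infty d_\infty} + \dfrac{C_1}{\mu_g}\|\tilde\Sigma_g(t,\cdot)\|_{H^1(\Omega)} + \dfrac{C_1}{\mu_g\sqrt{M_\infty d_\infty}}\left(\sum_{k=1}^N m_k (|\ddot R_k(t)|^2+|\ddot c_k(t)|^2)\right)^{1/2}.
\]
I would then square this inequality using $(a+b+c)^2 \leq 3(a^2+b^2+c^2)$ and integrate over $(0,T)$. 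Three contributions need to be controlled.

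The first contribution, $3T/(\mu_g M_\infty d_\infty)^2$, is bounded by a constant depending only on the listed parameters, since $T \leq T_\infty[K]$ in the continuation setup. The second contribution, $3(C_1/\mu_g)^2 \int_0^T \|\tilde\Sigma_g\|_{H^1(\Omega)}^2\,\mathrm{d}t$, is bounded by $3(C_1/\mu_g)^2 K$ directly from \ref{it_H1_bound_tensors}. The third contribution,
\[
\frac{3 C_1^2}{\mu_g^2 M_\infty d_\infty}\int_0^T \sum_{k=1}^N m_k \bigl(|\ddot R_k|^2+|\ddot c_k|^2\bigr)\,\mathrm{d}t,
\]
is likewise controlled by $3(C_1/\mu_g)^2 K/(M_\infty d_\infty)$ via \ref{it_H1_bound_tensors}. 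Summing the three yields the desired constant $\tilde K$ depending only on $\mu_g$, $d_\infty$, $M_\infty$, $C_1$ and $K$.

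There is no real obstacle here: the lemma is a direct corollary of the work already carried out to establish $(\mathscr Q_1)$. The only point worth emphasizing is the uniformity in $k$ of the right-hand side of \eqref{eq_bound_R_2}, which is what allows the maximum to be pulled inside before squaring and integrating. In particular, the strategy crucially relies on having bounded the collision of negative powers of $R_k$ and $|\mathcal F_k|$ by $d_\infty$-dependent constants \emph{before} summing over $k$, so that the summation picks up only the $m_k$-weighted second-order terms whose time-integral is already controlled by \ref{it_H1_bound_tensors}.
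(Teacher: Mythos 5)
Your argument is exactly the one the paper intends (the paper explicitly states the lemma "is a straightforward application of \eqref{eq_bound_R_2} and is left to the reader"): take the maximum over $k$ in \eqref{eq_bound_R_2}, square, integrate in time, and control the three resulting pieces by $T$, \ref{it_H1_bound_tensors} for $\|\tilde\Sigma_g\|_{H^1}$, and \ref{it_H1_bound_tensors} again for the $m_k$-weighted second-derivative sum. The reasoning and the dependencies of $\tilde K$ are all correct; this matches the paper's proof.
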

The proof of this  lemma is a straightforward application of
\eqref{eq_bound_R_2} and is left to the reader.

\subsection*{Strict version $(\mathscr Q_2)$ of \ref{it_bound_F}}
First, we remark that we can also adapt the previous proof to yield the following lemma:
\begin{Lem}
  \label{lem_uf}
  There exists a constant $C'$, depending only on $K$ and the list of parameters \eqref{eq_listparametre}, such that, for $T <1,$ there holds
  \begin{equation*}
    \int_0^T \|\p_x u_f\|_{L^\infty(\mathcal F)} \mathrm{d}t\leq C'\sqrt{T}.
  \end{equation*}
\end{Lem}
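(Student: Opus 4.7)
The plan is to chain together the pointwise bound from \textbf{Proposition~\ref{cor_Linf_dxuf}} with the time-integrated control provided by assumption \ref{it_H1_bound_tensors}. First I would recall that on the fluid domain we have the pointwise inequality
\[
\|\partial_x u_f\|_{L^\infty(\mathcal F)}(t) \leq \frac{C_1}{\mu_f}\Bigl(\|\tilde\Sigma_f\|_{H^1(\Omega)}(t) + \|\mathrm{p}_f(\rho_f)\|_{L^\infty(\mathcal F)}(t)\Bigr).
\]
The pressure term is immediately bounded using \ref{it_bound_rho}, which gives $\rho_f \le \bar\rho_\infty$ and hence $\|\mathrm{p}_f(\rho_f)\|_{L^\infty(\mathcal F)} \leq \kappa_f \bar\rho_\infty^{\gamma_f}$, a constant depending only on the list~\eqref{eq_listparametre}.

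Next, I would integrate in time and treat the two contributions separately. For the stress term, Cauchy--Schwarz gives
\[
\int_0^T \|\tilde\Sigma_f\|_{H^1(\Omega)}\, \mathrm{d}t \leq \sqrt{T}\,\Bigl(\int_0^T \|\tilde\Sigma_f\|_{H^1(\Omega)}^2\, \mathrm{d}t\Bigr)^{1/2} \leq \sqrt{T}\sqrt{K},
\]
where the last inequality uses assumption \ref{it_H1_bound_tensors}. For the pressure contribution one has simply $\int_0^T \kappa_f \bar\rho_\infty^{\gamma_f}\, \mathrm{d}t = T\,\kappa_f\bar\rho_\infty^{\gamma_f} \leq \sqrt{T}\,\kappa_f\bar\rho_\infty^{\gamma_f}$ since $T < 1$.

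Combining these estimates yields
\[
\int_0^T \|\partial_x u_f\|_{L^\infty(\mathcal F)}\, \mathrm{d}t \leq \frac{C_1}{\mu_f}\bigl(\sqrt{K} + \kappa_f\bar\rho_\infty^{\gamma_f}\bigr)\sqrt{T},
\]
which is the desired estimate with $C' := (C_1/\mu_f)(\sqrt{K} + \kappa_f\bar\rho_\infty^{\gamma_f})$, depending only on $K$ and the parameters listed in~\eqref{eq_listparametre}. There is no real obstacle here: the lemma is essentially a direct consequence of the pointwise control from \textbf{Proposition~\ref{cor_Linf_dxuf}}, the density bound $(Q_3)$, and the time-integrated $H^1$ bound on $\tilde\Sigma_f$ built into \ref{it_H1_bound_tensors}, with the $\sqrt{T}$ factor arising naturally from Cauchy--Schwarz.
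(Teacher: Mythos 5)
Your proof is correct and follows essentially the same route as the paper: invoke the pointwise bound \eqref{eq_Linf_dxuf} from Proposition~\ref{cor_Linf_dxuf}, control the pressure term via~\ref{it_bound_rho}, apply Cauchy--Schwarz together with~\ref{it_H1_bound_tensors} to get the $\sqrt{TK}$ factor, and absorb the linear-in-$T$ pressure contribution using $T<1$. The only cosmetic difference is that the paper bounds the pressure by $\max_{\underline{\rho}_\infty/2\leq r\leq 2\bar\rho_\infty}\mathrm{p}_f(r)$ rather than the slightly tighter $\kappa_f\bar\rho_\infty^{\gamma_f}$; both yield a constant depending only on the list~\eqref{eq_listparametre}.
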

\begin{proof}
  We use the $L^\infty$ bound on $\p_x u_f$, see
  \eqref{eq_Linf_dxuf}, and the bound \ref{it_bound_rho} on the
  density $\rho_f$. It holds, by integrating on $(0,T)$,
  \begin{equation*}
    \label{eq:bound_rho_2}
    \int_0^T \|\p_x u_f \|_{L^\infty(\mathcal F)} \mathrm{d}t\leq
    \dfrac{C}{\mu_f} \int_0^T \|\tilde \Sigma_f\|_{H^1(\Omega)} \mathrm{d}t+ T
    \max_{\underline{\rho}_\infty/2\leq r\leq2 \bar\rho_\infty} \mathrm{p}_f(r).
  \end{equation*}
  Inequality \ref{it_H1_bound_tensors} on the stress tensor leads to
  \begin{equation*}
    \label{eq:bound_rho_3}
    \int_0^T \|\p_x u_f \|_{L^\infty(\mathcal F)} \mathrm{d}t\leq
    \dfrac{C}{\mu_f} \sqrt{TK}+ T
    \max_{\underline{\rho}_\infty/2\leq r\leq 2\bar\rho_\infty} \mathrm{p}_f(r),
  \end{equation*}
  which gives the expected bound for $T<1$.
\end{proof}
   
The continuity of the velocities \eqref{eq_continuity_velocity}
implies then that
\begin{equation*}
  \label{eq:bound_Fk1}
  \begin{aligned}
    \dfrac{\mathrm{d}}{\mathrm{d}t}(x_{k+1}^- - x_k^+)&=
    u(x_{k+1}^-)-u(x_k^+)\\
    & \leq \|\p_x u_f\|_{L^\infty(\mathcal F)}|x_{k+1}^- - x_k^+|.
  \end{aligned}
\end{equation*}
From Lemma \ref{lem_uf}, we can choose $T$ small (depending only on $C'$) 
such that there holds:
\[
\dfrac{|\mathcal F_k^0|}{2}<|\mathcal F_k|<2 |\mathcal F_k^0|,
\]
on $(0,T),$ which leads to the desired estimate.

\subsection*{Strict version $(\mathscr Q_3)$ of \ref{it_bound_rho}}

Since the fluid density $\rho_f$ satisfies a continuity equation associated with the velocity $u_f$
on the fluid domains $\mathcal{F}_k$
which are transported by the same  velocity field $u_f$, a classical estimate on
$(0,T)$ provides
  \begin{multline}
    \label{eq_bound_rho_1}
   ( \min_{x\in \mathcal F^0}\rho_f^0) \times \exp{\left(-\int_0^T \|\p_x
          u_f\|_{L^\infty(\mathcal F)}\mathrm{d}t\right)}\\
      \leq \rho_f(t,x) \leq
      (\max_{x\in \mathcal F^0}\rho_f^0) \times \exp{\left(\int_0^T \|\p_x
          u_f\|_{L^\infty(\mathcal F)}\mathrm{d}t\right)},
    \end{multline}
  Lemma \ref{lem_uf} allows to bound the exponential terms in
  \eqref{eq_bound_rho_1} for small time.
Namely, for $T$ small (depending on $C'$)  it holds, on $[0,T]$,
  \begin{equation*}
    \label{eq:bound_tho_4}
    \rho_f(t,x) \in \bigg( \dfrac 1 2 \min_{x \in \mathcal F^0}\rho_f^0,
    2 \max_{x \in \mathcal F^0} \rho_f^0
   \bigg) .
  \end{equation*}
  Then the assumption \eqref{it_bound_rho0} on the initial fluid density
  allows to deduce a strict version of estimate \ref{it_bound_rho}.

\subsection*{Strict version $(\mathscr Q_4)$ of \ref{it_bound_H1}}
Applying \eqref{eq_bound_H1_1}, we obtain:
 \begin{align*}
   &  \sup\limits_{[0,T]} \left( \int_{\mathcal F} \right.\left.\mu_f \dfrac{|\p_x
         u_f|^2}{2} \mathrm{d}x+ \mu_g\sum_{k=1}^N \dfrac{|\dot
         R_k|^2}{R_k}\right)\\
     &+ \int_0^T \left(\int_{\mathcal F}\rho_f |\p_t u_f + u_f \p_x
       u_f|^2 \mathrm{d}x
     + \sum_{k=1}^N m_k (|\ddot c_k|^2 + |\ddot
     R_k|^2)\right)\\
   &\leq  \sup\limits_{[0,T]} \left[\left( 2\sum_{k=1}^N \kappa_k \dfrac{|\dot R_k|}{R_k}\right)
   +   \int_{\mathcal F} \mathrm{p}_f(\rho_f) |\p_x u_f| \mathrm{d}x\right] \\
   & + \int_0^T \int_{\mathcal F} \mu_f \dfrac{|\p_x u_f|^3}{2}\mathrm{d}x\\
   &+    \int_0^T \sum_{k=1}^N \left( 2\kappa_k \dfrac{|\dot R_k|^2}{R_k^2} + \mu_g \dfrac{|\dot
       R_k|^3}{ R_k^2}\right) + E_1,
 \end{align*}
with a constant $E_1$ depending only on the list of parameters \eqref{eq_listparametre}.
To proceed, we detail now the controls of the  five remaining terms on the right-hand side.

\medskip

Concerning the first line in the right-hand side,  the first term can be rewritten with \ref{it_bound_mk0}:
\begin{equation*}  \label{eq:bound_H1_2}
  \begin{aligned}
    \sum_{k=0}^N \kappa_k \dfrac{|\dot R_k|}{R_k}\leq \dfrac{1}{M_\infty} \sum_{k=0}^N \left(\dfrac{|\dot
        R_k|}{\sqrt{R_k}} \dfrac{1}{N \sqrt{R_k}} \right).
  \end{aligned}
\end{equation*}
The Cauchy--Schwarz inequality gives then
\begin{equation*}
  \label{eq:bound_H1_3}
  \begin{aligned}
    \sum_{k=0}^N \kappa_k \dfrac{|\dot R_k|}{R_k}\leq
    \dfrac{1}{M_\infty\sqrt{\mu_g}} \left(\mu_g \sum_{k=0}^N
      \dfrac{|\dot R_k|^2}{{R_k}}\right)^{1/2} \left( \sum_{k=0}^N
      \dfrac{1}{N^2{R_k}} \right)^{1/2}.
  \end{aligned}
\end{equation*}
The first parenthesis can be bounded by $K$ using \ref{it_bound_H1} and the
second one  by $1/\sqrt{d_\infty}$ thanks to \ref{it_bound_R}  so that
\begin{equation}
  \label{eq:bound_H1_4}
  \begin{aligned}
    \sum_{k=0}^N \kappa_k  \dfrac{|\dot R_k|}{R_k}\leq
    \dfrac{1}{M_\infty\sqrt{\mu_g}} \dfrac{\sqrt{K}}{\sqrt{d_\infty}}.
  \end{aligned}
\end{equation}
The control of the second pressure term 
relies on \ref{it_bound_rho}-\ref{it_bound_H1} and a Cauchy-Schwarz inequality :
\begin{align}
  \notag 
  \int_\mathcal{F} \mathrm{p}_f(\rho_f)|\p_x u_f| \mathrm{d}x & \leq
                                                                \dfrac{\sqrt{2}}{\sqrt{\mu_f}} \left( \int_{\mathcal F}\mu_f
                                                                \dfrac{|\p_x u_f|^2}{2}\mathrm{d}x\right)^{1/2} \left[\max \limits_{
                                                                [\underline{\rho}_\infty/2, 2\bar \rho_\infty]} \mathrm{p}_f\right]\sqrt{2}, \\
  \label{eq:bound_H1_4bis} & \leq
                             \dfrac{2}{\sqrt{\mu_f}}\left[\max \limits_{
                                                                [\underline{\rho}_\infty/2, 2\bar \rho_\infty]} \mathrm{p}_f\right]\sqrt{K}.
\end{align}
As for the term on the second line in the right-hand side of
\eqref{eq_bound_H1_1}, we decompose as follows
\begin{equation*} 
  \int_0^T \int_{\mathcal F}\mu_f \dfrac{|\p_x
    u_f|^3}{2}\mathrm{d}x \leq \left( \sup_{[0,T]}\int_{\mathcal F}\mu_f
    \dfrac{|\p_x u_f|^2}{2}\right)\left( \int_0^T \|\p_x
    u_f\|_{L^\infty(\mathcal F)}\right),
\end{equation*}
where the first term can be bounded by $K$ according to
\ref{it_bound_H1}.  The second one is bounded using Lemma \ref{lem_uf}.
It follows that
\begin{equation}
  \label{eq:bound_H1_5}
  \int_0^T \int_{\mathcal F}\mu_f \dfrac{|\p_x
    u_f|^3}{2}\mathrm{d}x \leq
  C' \sqrt{T}K.
\end{equation}
We now turn to the first term on the third line. Applying a standard
$L^{\infty}-L^{1}$ H\"older inequality allows to bound this term by
\begin{equation*}
  \int_0^T \sum_{k=1}^N \kappa_k \dfrac{|\dot R_k|^2}{R_k^2}\leq
  \dfrac{1}{M_\infty\mu_g N} \max_{k\in \{1,\dots,N\}}
  \left\|\dfrac{1}{R_k}\right\|_{L^\infty(0,T)}\int_0^T \mu_g \sum_{k=1}^N
  \dfrac{|\dot R_k|^2}{R_k}.
\end{equation*}
The $L^\infty$ norm can be handled by the bound \ref{it_bound_R} and
the integral term by  \eqref{eq_Q6}.
It follows that
\begin{equation}
   \label{eq:bound_H1_8}
  \int_0^T \sum_{k=1}^N \kappa_k \dfrac{|\dot R_k|^2}{R_k^2}\leq
 \dfrac{E_0}{M_\infty\mu_g d_\infty}.
\end{equation}
It remains to bound the second term on the third line. In this
respect, we decompose the nonlinear term
\[
  \dfrac{|\dot{R}_k|^3}{R_k^2} = \dfrac{|\dot{R}_k|}{R_k}
  \dfrac{|\dot{R}_k|^{3/2}}{R_k^{3/4}}
  \dfrac{|\dot{R}_k|^{1/2}}{R_k^{1/4}}
\]
and apply a $L^{\infty}-L^{4/3}-L^{4}$ H\"older inequality to yield:
\begin{equation*}
   \label{eq:bound_H1_9}
   \sum_{k=1}^N \dfrac{|\dot R_k|^3
   }{R_k^2}\leq
   \max_{k\in \{1,\dots,N\}} \dfrac{|\dot R_k|}{R_k}
   \left( \sum_{k=1}^N\dfrac{|\dot R_k|^2
     }{ R_k}\right)^{3/4}
    \left( \sum_{k=1}^N\dfrac{|\dot R_k|^2
}{ R_k}\right)^{1/4}.
\end{equation*}
Integrating over $(0,T)$ we obtain again with a $L^2-L^{4}-L^4$
H\"older inequality that:
\[
  \int_0^T\sum_{k=1}^N \dfrac{|\dot R_k|^3
  }{ R_k^2}
  \leq
  \left( \int_0^T\left|\max_{k\in \{1,\dots,N\}} \dfrac{|\dot R_k|}{R_k}\right|^2\right)^{1/2}
  \left( \int_0^T \left|\sum_{k=1}^N\dfrac{|\dot R_k|^2
      }{ R_k}\right|^3\right)^{1/4}
  \left( \int_0^T\sum_{k=1}^N\dfrac{|\dot R_k|^2
    }{ R_k}\right)^{1/4}.
\]
Corollary \ref{cor_est_sigmag} and inequality \ref{it_bound_R} allow
to control the first term on the right-hand side. Indeed,
\begin{equation*}
   \mu_g \max_{k\in \{1,\dots,N\}} \dfrac{|\dot
          R_k|}{R_k} 
    \leq
    \dfrac{1}{M_\infty d_\infty}
  + C_1\bigg( \|\tilde
          \Sigma_f\|^2_{H^1(\Omega)}+ \dfrac{1
            }{M_\infty d_\infty} \sum_{k=1}^N m_k(|\ddot R_k|^2+ |\ddot
          c_k|^2)\bigg)^{1/2}.
\end{equation*}
Taking the $L^2$-norm in time and applying a triangular inequality and
\ref{it_H1_bound_tensors} provides
\begin{equation*}
    \left( \int_0^T \left| \max_{k\in \{1,\dots,N\}} \dfrac{|\dot
          R_k|}{R_k}\right|^2\right)^{1/2} \\
    \leq
    \dfrac{1}{\mu_g}\left(\sqrt{T}\dfrac{1}{M_\infty d_\infty}
      +C_1 \sqrt{K\left( 1+ \dfrac{1}{M_\infty d_\infty}\right)}\right).
\end{equation*}
As the second term is concerned, it holds
\begin{equation*}
  \begin{aligned}
    \left( \int_0^T \left|\sum_{k=1}^N\dfrac{|\dot R_k|^2 }{
          R_k}\right|^3\right)^{1/4}\leq T^{1/4}\left(
      \sup_{[0,T]}\sum_{k=1}^N \dfrac{|\dot R_k|^2}{R_k}\right)^{3/4},
  \end{aligned}
\end{equation*}
which can be handled thanks to \ref{it_bound_H1}, leading to
\begin{equation*}
  \left( \int_0^T \left|\sum_{k=1}^N\dfrac{|\dot R_k|^2 }{
        R_k}\right|^3\right)^{1/4}\leq T^{1/4}\left(\dfrac{K}{\mu_g}\right)^{3/4}.
\end{equation*}
Now the bound \eqref{eq_Q6} gives
\begin{equation*}
   \left( \int_0^T\sum_{k=1}^N\dfrac{|\dot R_k|^2
    }{ R_k}\right)^{1/4} \leq \left( \dfrac{E_0}{\mu_g}\right)^{1/4}.
\end{equation*}
To sum up, it finally yields
\begin{equation} \label{eq:bound_H1_13}
    \int_0^T\sum_{k=1}^N \dfrac{|\dot R_k|^3
  }{ R_k^2}\leq \dfrac{1}{\mu_g^2}T^{1/4} K^{3/4}E_0^{1/4}\left(\sqrt{T}\dfrac{1}{M_\infty d_\infty} + C_1\sqrt{K\left( 1+
    \dfrac{1}{M_\infty d_\infty}\right)} \right).
\end{equation}
Plugging
\eqref{eq:bound_H1_4}-\eqref{eq:bound_H1_4bis}-\eqref{eq:bound_H1_5}-\eqref{eq:bound_H1_8}-\eqref{eq:bound_H1_13}
into \eqref{eq_bound_H1_1}, it yields
\begin{multline}
  \label{eq:bound_H1_14}
  \sup_{[0,T]}\left(\int_{\mathcal F} \dfrac{|\p_x u_f|^2}{2}
    \mathrm{d}x+ \sum_{k=1}^N \mu_g \dfrac{|\dot
      R_k^2|}{R_k}\right)\\
  \begin{aligned}
    & \leq 2 \left( \dfrac{
        \sqrt{K}}{M_\infty \sqrt{\mu_gd_\infty}} +
      \dfrac{1}{\sqrt{\mu_f}} \max_{[\underline{\rho}_\infty/2, 2\bar
        \rho _\infty]}\mathrm{p}_f(r)\sqrt{K}\right)\\
    &+ C'\sqrt{T}{K}
    + \dfrac{ E_0}{M_{\infty}\mu_gd_\infty}  \\
    & +\dfrac{2}{\mu_g^2}T^{1/4} K^{3/4}E_0^{1/4}\left(\sqrt{T}\dfrac{1}{M_\infty d_\infty} + C_1\sqrt{K\left( 1+
          \dfrac{1}{M_\infty d_\infty}\right)} \right)+E_1.
  \end{aligned}
\end{multline}
If one considers $K>1$ and $T<1$,
defining
\begin{equation*}
  \begin{aligned}
    C_1' &= 2\left( \dfrac{1
        }{M_{\infty}\sqrt{\mu_gd_\infty}} +
      \dfrac{1}{\sqrt{\mu_f}} \max_{[\underline{\rho}_\infty/2, 2\bar
        \rho _\infty]}\mathrm{p}_f(r)\right),\\
    C_2' &=\dfrac{2}{\mu_g^2}E_0^{1/4}\left(\dfrac{
    1}{M_{\infty} d_\infty} + C_1\sqrt{ 1+
    \dfrac{1}{M_\infty d_\infty}} \right),
  \end{aligned}
\end{equation*}
the previous inequality writes
\begin{equation} \label{eq_toto}
    \sup_{[0,T]}\left(\int_{\mathcal F} \dfrac{|\p_x u_f|^2}{2}
      \mathrm{d}x+ \sum_{k=1}^N \mu_g \dfrac{|\dot
        R_k^2|}{R_k}\right)\leq
    C_1'\sqrt{K}  +\dfrac{E_0
      }{M_\infty \mu_g d_\infty}  
     + C'\sqrt{T}{K} + T^{1/4}K^{5/2}C_2' + E_1.
\end{equation}
Introduce $\lambda \in (0,1/2)$ to be fixed later on and set:
\[
  K_{\infty} := \dfrac{4|C'_1|^2}{\lambda^2} + \dfrac{1}
  {M_{\infty}\mu_g d_\infty} \dfrac{E_0}{2\lambda} + \dfrac{E_1}{1- 2\lambda}.
\]
When $K > K_{\infty},$ the sum of  the two first terms on the
right-hand side of \eqref{eq_toto} are bounded by $ \lambda K$.
Now taking $T$ small enough, for instance 
\[
  T= \min \{ (\lambda/2|C'|)^2, (\lambda (K^{3/2}C'_2)^{-1}/2)^4\},
\]
the sum of the third and fourth term can be bounded  by $\lambda K$
as well.
Finally,
the right-hand side is bounded according to
\begin{equation*}
  \begin{aligned}
    \sup_{[0,T]}\left(\int_{\mathcal F} \dfrac{|\p_x u_f|^2}{2}
      \mathrm{d}x+ \sum_{k=1}^N \mu_g \dfrac{|\dot
        R_k^2|}{R_k}\right)&\leq
    E_1+ 2 \lambda   K < K 
  \end{aligned}
\end{equation*}
since $\lambda<1/2$ and $C'_1 > 0.$

\subsection*{Strict version $(\mathscr Q_5)$ of \ref{it_H1_bound_tensors}}

In order to prove this estimate, we can adjust with the parameter
$\lambda$. First, thanks to Proposition \ref{prop_estima_sigmaTf} and
to the bounds \ref{it_bound_R} and \ref{it_bound_mk0}, there exists
$C>0$, depending in particular on $M_\infty$ and $d_\infty$, such that
  \begin{multline}
    \label{eq:H1_bound_tensors_1}
    \int_0^T \|\tSigma_f\|_{H^1(\Omega)}^2 \mathrm{d}t \leq C_0
    \int_0^T\Bigg[ \|\Sigma_f\|^2_{H^1(\mathcal F)} \\ + \sum_{k=1}^N
    m_k \big( |\ddot{R}_{k}|^2+ |\ddot{c}_k|^2\big) + \sum_{k=1}^N
    \bigg( \mu_g^2 \dfrac{|\dot{R}_k|^2}{R_k} +
    \dfrac{\kappa_k^2}{R_k}\bigg) \Bigg] \mathrm{d}t.
  \end{multline}
  The second term of the right-hand side can be bounded with the help of 
  \eqref{eq_bound_H1_1} by bounding the right-hand side of \eqref{eq_bound_H1_1}
 as in the previous analysis on \ref{it_bound_H1}. This entails:
  \begin{equation}
    \label{eq_mRC3}
    \int_0^T \sum_{k=1}^N m_k \big( |\ddot{R}_{k}|^2+ |\ddot{c}_k|^2\big)
    \mathrm{d}t \leq E_1 + 2 \lambda K.
  \end{equation}
  The third term is controlled using \eqref{eq_Q6}. The last term can
  be bounded by $T/((M_\infty)^2d_\infty)$. Therefore, this
  inequality becomes
 \begin{equation}   \label{eq:H1_bound_tensors_3}
   \int_0^T \|\tSigma_f\|_{H^1(\Omega)}^2 \mathrm{d}t \leq C_0
   \Bigg[ \int_0^T \|\Sigma_f\|^2_{H^1(\mathcal F)}\mathrm{d}t
  +   (E_1 +  2\lambda K) + \mu_g E_0 + \frac{T}{(M_\infty)^2d_\infty} 
   \Bigg] .
 \end{equation}
 Let us now focus on the first term.
 We use the definition \eqref{eq_fluid_tensor_sigmaf}  of
$\Sigma_f$ and the momentum
equation~\eqref{eq_fluid_NS_1D} to write
\begin{equation*}
  \begin{aligned}
    \int_0^T \|\Sigma_f\|^2_{H^1(\mathcal F)}\mathrm{d}t &= \int_0^T
    \| \mu_f \p_x u_f -\mathrm{p}_f(\rho_f) \|^2_{L^2(\mathcal
      F)}\mathrm{d}t \\
    &\quad + \int_0^T \| \rho_f(\p_tu_f + u_f \p_x u_f)
    \|^2_{L^2(\mathcal F)}\mathrm{d}t .
  \end{aligned}
\end{equation*}
Thanks to \eqref{eq_Q6} and \ref{it_bound_rho}, the first term can be
bounded. For the second term, one has
\begin{equation*}
  \int_0^T \| \rho_f(\p_tu_f + u_f \p_x u_f)
  \|^2_{L^2(\mathcal F)}\mathrm{d}t  \leq
  \bar{\rho}_\infty \int_0^T \int_\mathcal{F} \rho_f |\p_tu_f +
  u_f \p_x u_f |^2
  \mathrm{d}t ,
\end{equation*}
and this right-hand side actually appears in \eqref{eq_bound_H1_1} and
thus, the previous estimate obtained to prove \ref{it_bound_H1} can be
used. This provides
\begin{equation*}
  \int_0^T \|\Sigma_f\|^2_{H^1(\mathcal F)}\mathrm{d}t \leq 2 \mu_f E_0
  + 2 T \left[ \max_{[\underline{\rho}_\infty,\bar
    \rho _\infty]}\mathrm{p}_f\right]^2  +   \bar{\rho}_\infty (E_1 + 2\lambda
  K) .
\end{equation*}
Gathering the previous estimates and after rearrangement,
\begin{multline*}
  \int_0^T \|\tSigma_f\|_{H^1(\Omega)}^2 \mathrm{d}t \leq C_0
  \Bigg[ \big( 2 \mu_f E_0 + (\bar{\rho}_\infty+1) E_1 + \mu_g E_0\big)
  \\
  + \bigg( 2  \left[ \max_{[\underline{\rho}_\infty,\bar
    \rho _\infty]}\mathrm{p}_f\right]^2 + \frac{1}{(M_\infty)^2d_\infty}
  \bigg) T +  2(\bar{\rho}_\infty+1)\lambda K
  \Bigg] 
\end{multline*}
holds. Now starting from Proposition~\ref{prop_tSigmag}, a similar
estimate can be proved for $\tSigma_g$. Then, using
again~\eqref{eq_mRC3}, one finally have
\begin{equation*}
    \int_0^T  \bigg[ \|\tSigma_f\|_{H^1(\Omega)}^2 +
    \|\tSigma_g\|_{H^1(\Omega)}^2 + m_k \big( |\ddot{R}_{k}|^2+
    |\ddot{c}_k|^2\big)  \bigg] \mathrm{d}t \leq C_1 + C_2 T + C_3
    \lambda K,
\end{equation*}
where $C_1$, $C_2$ and $C_3$ are positive and independent of $N$, $T$,
$\lambda$ and $K$. To conclude, it suffices to choose $\lambda$
sufficiently small  so that $\lambda \leq (4C_3)^{-1}$ and $K \geq
4C_1.$ We can then take $T$ smaller if necessary so that $C_2T \leq
K/4$.

\section{Analysis of the density equation} \label{sec_rhoH1}

This section is devoted to the proof of the following proposition:
\begin{Prop}
  \label{prop_rhoH1}
  Assume that $T >0$ and $(\rho_f,u_f,(c_k,R_k)_{k=1,\ldots,N})$ is a
  classical solution to
  \eqref{eq_fluid_mass}-\eqref{eq_droplet_tensor_sigmak}
  on $(0,T)$ -- complemented with initial conditions constructed as in
  \eqref{eq_initbubble}-\eqref{eq_initbubble2} -- that satisfies
  \ref{it_bound_R}--\ref{it_H1_bound_tensors}.
  Then, there exists strictly positive constants $K_1$ and $T_1$
  depending only on the list of parameters \eqref{eq_listparametre}
  and $K$ such that
  \[
    \|\rho_f(t)\|_{H^1(\mathcal F(t))} \leq K_1 \text{ on $(0,T_1).$}
  \]
\end{Prop}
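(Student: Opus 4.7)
The plan is to perform an $H^1$ energy estimate on $\rho_f$ by differentiating the mass conservation equation \eqref{eq_fluid_mass} in space and using the momentum equation to absorb the resulting second derivative of the velocity. First, setting $\zeta := \partial_x \rho_f$, one obtains
\[
(\partial_t + u_f \partial_x) \zeta = -2 (\partial_x u_f) \zeta - \rho_f \partial_{xx} u_f
\]
on each fluid sub-interval $\mathcal F_k(t)$. Since these intervals are transported by the flow of $u_f$ by virtue of \eqref{eq_continuity_velocity}, Reynolds' transport theorem applied to $|\zeta|^2/2$ eliminates any boundary contribution and yields
\[
\frac{d}{dt} \int_{\mathcal F(t)} \frac{|\zeta|^2}{2}\, dx = -\int_{\mathcal F(t)} \Bigl( \tfrac{3}{2}(\partial_x u_f)|\zeta|^2 + \rho_f (\partial_{xx} u_f)\,\zeta\Bigr)\, dx.
\]
The initial value $\|\zeta(0)\|_{L^2(\mathcal F^0)}$ is controlled by $\|\rho_f^0\|_{H^1(\Omega)}$, one of the parameters in \eqref{eq_listparametre}.

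The first term on the right-hand side is harmless since Lemma \ref{lem_uf} already provides time-integrability of $\|\partial_x u_f\|_{L^\infty(\mathcal F)}$, uniform in $N$. For the second term, I would use the momentum equation \eqref{eq_fluid_NS_1D} together with the constitutive law \eqref{eq_fluid_tensor_sigmaf} to rewrite
\[
\mu_f \partial_{xx} u_f \;=\; \partial_x \Sigma_f + \mathrm{p}_f'(\rho_f)\, \partial_x \rho_f \;=\; \rho_f (\partial_t u_f + u_f \partial_x u_f) + \mathrm{p}_f'(\rho_f)\, \zeta.
\]
The contribution of the pressure piece reabsorbs into $\|\zeta\|_{L^2}^2$ through the pointwise bound \ref{it_bound_rho} on $\rho_f$, while the material-derivative piece is handled by Cauchy--Schwarz using that $\int_0^T\!\int_{\mathcal F} \rho_f |\partial_t u_f + u_f \partial_x u_f|^2\,dx\,dt$ is bounded uniformly in $N$ by \ref{it_bound_H1}.

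Combining these controls leads to a differential inequality of the form
\[
\frac{d}{dt} \|\zeta(t)\|_{L^2(\mathcal F(t))}^2 \;\leq\; C(t)\, \|\zeta(t)\|_{L^2(\mathcal F(t))}^2 \;+\; g(t)\, \|\zeta(t)\|_{L^2(\mathcal F(t))},
\]
where $C \in L^1(0,T)$ and $g \in L^2(0,T)$ with norms depending only on the listed parameters and $K$. A standard Gronwall-type argument then provides a uniform-in-$N$ bound on $\|\zeta(t)\|_{L^2(\mathcal F(t))}$ on a short time interval $[0,T_1]$, which combined with \ref{it_bound_rho} yields the claimed $H^1$ bound. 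The delicate point --- and the main reason this argument works uniformly in the number of bubbles --- is the substitution of $\partial_{xx} u_f$ via the momentum equation: it trades direct $H^2$ regularity of $u_f$, whose size would depend on the geometry of $\mathcal F$, for control of the material derivative, which is precisely the quantity handled with $N$-independent constants thanks to the extended stress tensor analysis of Section~\ref{sec_stress-estimates}.
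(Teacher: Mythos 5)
Your argument is correct and gives the stated bound, but it takes a genuinely different route from the paper. The paper does not differentiate the mass equation directly on $\mathcal F(t)$: instead it first defines an \emph{extension} $\tilde{\rho}_f$ of the density to all of $\Omega$ as the solution of the transport-with-source equation \eqref{eq_trhofN_cauchy3}, driven by the extended velocity $\tilde{u}_f$ and extended stress $\tilde{\Sigma}_f$ of Section~\ref{sec_stress-estimates}; it establishes an $L^\infty(\Omega)$ bound for $\tilde{\rho}_f$ by the characteristics method, and only then does the $L^2$ energy estimate for $Y = \partial_x\tilde{\rho}_f$ on the whole of $\Omega$. By contrast, you work entirely on the fluid domain, using that $\mathcal F(t)$ is transported by $u_f$ (by \eqref{eq_continuity_velocity}) so that Reynolds' theorem kills the interface terms, and you replace $\partial_{xx}u_f$ in the resulting balance by the material derivative $\rho_f(\partial_t u_f + u_f\partial_x u_f)$ plus the pressure term, via $\mu_f\partial_{xx}u_f = \partial_x\Sigma_f + \mathrm{p}_f'(\rho_f)\partial_x\rho_f$. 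Both routes feed on the same $N$-independent ingredients: \ref{it_H1_bound_tensors} for $L^2_tH^1_x$ control of $\tilde{\Sigma}_f$, \ref{it_bound_rho} for pointwise bounds on $\rho_f$, and Lemma~\ref{lem_uf} for $L^1_tL^\infty_x$ control of $\partial_x u_f$. Your version is leaner and isolates cleanly the mechanism that makes the estimate uniform in $N$; the paper's extension construction is a bit heavier here, but pays off because it simultaneously produces a uniform $L^\infty_t H^1(\Omega)$ bound for $\tilde{\rho}_f$, which is the form actually invoked in Proposition~\ref{prop_tilde_rho_fN} to run Aubin--Lions on the whole domain. One small correction: the time-integral bound on $\int_0^T\!\int_{\mathcal F}\rho_f|\partial_tu_f+u_f\partial_xu_f|^2\,\mathrm{d}x\,\mathrm{d}t$ should be attributed to \ref{it_H1_bound_tensors} (combined with \ref{it_bound_rho} through $\partial_x\Sigma_f = \rho_f(\partial_tu_f+u_f\partial_xu_f)$ and $\|\Sigma_f\|_{H^1(\mathcal F)}\le\|\tilde{\Sigma}_f\|_{H^1(\Omega)}$), not to \ref{it_bound_H1}, which is only a pointwise-in-time control of the $\dot H^1$ seminorm of $u_f$.
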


Again, the main difficulty in obtaining this proposition is to make
the constant $K_1$ independent of the parameter $N.$
For this, we proceed as in Section \ref{sec_deriv-macr-model} and
interpret $\rho_f$ on $\mathcal F(t)$ as the trace of some
 global density defined on $\Omega.$ We notice here that, by
 assumption, we already have this property initially
 since we set 
 \[
   \rho_f(0,\cdot) = \rho_f^{0} \text{ on $\mathcal F^0$}
 \]
 with $\rho_f^0 \in H^1(\Omega).$
 To extend this property, we construct again extensions $\tilde{u}_f$  of fluid velocity-field ${u}_f$ and 
 $\tilde{\Sigma}_f$ of stress tensor $\Sigma_f$  with the same formula
 as in \eqref{eq_tilde_udfN} and \eqref{eq_tSigmaf} respectively.
 We then construct $\tilde{\rho}_f$
 \begin{equation}
   \label{eq_trhofN_cauchy3}
   \begin{cases}
     \p_t \tilde \rho_f + \tilde u_f \p_x \tilde \rho_f =
     -\dfrac{\tilde\rho_f}{\mu_f}\left( \tilde \Sigma_f +
       \mathrm{p}_f(\tilde \rho_f)\right),& \text{on }(0,T)\times
     \Omega , \\
     \tilde \rho_f(0,.)=  \rho_f^{0},& \text{on }
     \Omega , 
   \end{cases}
 \end{equation}
 By \ref{it_H1_bound_tensors} with {\bf Proposition
   \ref{eq_Linf_dxuf}} for the fluid part and
 \ref{it_bound_R}-\ref{it_H1_bound_tensors} with Corollary
 \ref{cor_est_sigmag} for the bubble part, we obtain that
 $\tilde{u}_f \in L^2(0,T;W^{1,\infty}(\Omega))$ with
 $\tilde{\Sigma}_f \in L^2(0,T;H^1(\Omega)).$
 Consequently, we have a unique solution \eqref{eq_trhofN_cauchy3}
 which solves \eqref{eq_fluid_mass} on $\mathcal F.$
 By uniqueness of the solution to \eqref{eq_fluid_mass} in the
 regularity class of classical solutions (see \cite{HMS1}),
 we have thus $\tilde{\rho}_f = \rho_f$ on $\mathcal F(t)$ for $t \in
 (0,T).$ So, our proof reduces to computing bounds for
 $\tilde{\rho}_f.$
 
 First, we prove that there exists $T_0\leq T$ such that we can control 
 $\|\tilde \rho_f\|_{L^{\infty}(\Omega)}$ explicitly on $(0,T_0).$  By the method of
 characteristics and the explicit value of ${\rm p}_f:$
 \[
   \|\tilde \rho_f(t,.)\|_{L^\infty(\Omega)}\leq \|
   \rho_f^{0}\|_{L^\infty(\Omega)} 
    \exp\left( \dfrac{1}{\mu_f}\int_0^T \bigg( \|\tilde
     \Sigma_f \|_{L^\infty(\Omega)} + a_f  \|\tilde
     \rho_f(t,.)\|_{L^\infty(\Omega)}^{\gamma_f}\bigg)
     \mathrm{d}t \right).
\]
 The bound \ref{it_H1_bound_tensors} coupled with the embedding of
 $H^1(\Omega)$ in $L^\infty(\Omega)$ allows to control the stress
 tensor norm by $K$. If
 $\|\tilde \rho_f(t,.)\|_{L^\infty(\Omega)}\leq 2 \|
 \rho_f^{0}(t,.)\|_{L^\infty(\Omega)}$, it yields
 \begin{equation*}
   \label{eq_trhofN3}
   \|\tilde \rho_f(t,.)\|_{L^\infty(\Omega)} \leq \|
   \rho_f^{0}\|_{L^\infty(\Omega)}\exp \left(
     \dfrac{1}{\mu_f}\sqrt{T K} + 2 a_f T \|
     \rho_f^{0}(t,.)\|_{L^\infty(\Omega)}^{\gamma_f} \right).
 \end{equation*}
 By a standard continuation argument, we construct then a time-interval
 $(0,T_0)$ depending only on $K$, $a_f$, $\gamma_f$ and
 $\|\rho_f^{0}(t,.)\|_{L^\infty(\Omega)}$ so that:
 \begin{equation*}
   \label{eq_trhofN4}
   \|\tilde \rho_f(t,.)\|_{L^\infty(\Omega)} \leq 2\|
   \rho_f^{0}\|_{L^\infty(\Omega)}
 \end{equation*}
 for $t<T_0$.
 
 \medskip
 
 We focus now on $\partial_x \tilde{\rho}_f.$ For this, we apply a
 space derivative to \eqref{eq_trhofN_cauchy3}:
 \begin{equation*}
   \begin{cases}
     \partial_t (\dv_x\tilde{\rho}_f) + 
     \partial_x \big(\tu \dv_x\tilde{\rho}_f \big) =
     - \dfrac{\tilde{\rho}_f}{\mu_f}  \partial_x
     \tilde{\Sigma}_f  \\
     \qquad        \qquad        \qquad        \qquad        \qquad
     - \dfrac{1}{\mu_f} \big(
     \tilde{\Sigma}_f +
     \mathrm{p}_f(\tilde{\rho}_f) +
     \tilde{\rho}_f \mathrm{p}_f'(\tilde{\rho}_f) \big) 
     \dv_x\tilde{\rho}_f  \\
     (\dv_x\tilde{\rho}_f)(0,\cdot) = \partial_x  {\rho}_f^{0}.
   \end{cases}
 \end{equation*}
 For simplicity, we denote from now on
 $Y:=\dv_x\tilde{\rho}_f$. We multiply the previous equation
 by $2Y$, leading to 
 \begin{equation*}
   \dv_t (Y^2) + \dv_x (\tu Y^2) = - 2Y
   \dfrac{\tilde{\rho}_f}{\mu_f}
   \partial_x\tilde{\Sigma}_f  - Y^2 A
 \end{equation*}
 where $A$ denotes
 $\partial_x \tu + \frac{2}{\mu_f} \big(
 \tilde{\Sigma}_f + \kappa_f (\gamma_f+1)
 (\tilde{\rho}_f)^{\gamma_f} \big)$. Let first bound the
 right-hand side by a standard Cauchy-Schwarz/Minkowski inequality:
 \[
   \int_\Omega \Big(- 2Y \dfrac{\tilde{\rho}_f}{\mu_f}  \partial_x 
   \tilde{\Sigma}_f - Y^2 A \Big) \mathrm{d}x 
   \leq \frac{1}{\mu_f} \|\tilde{\rho}_f \dv_x
   \tilde{\Sigma}_f  \|^2_{L^2(\Omega)} + \Big(
   \frac{1}{\mu_f} + \|A\|_{L^\infty(\Omega)} \Big)\|Y\|^2_{L^2(\Omega)} .
 \]
 Going back to the PDE for $Y^2$, the $L^2$ norm of
 $\dv_x\tilde{\rho}_f$ can be bounded as
 \begin{multline*}
   \|\dv_x\tilde{\rho}_f\|^2_{L^2(\Omega)} \leq \bigg( \|
   \partial_x  {\rho}_f^{0} \|^2_{L^2(\Omega)} + \frac{1}{\mu_f}
   \int_0^{T}  \|\tilde{\rho}_f \dv_x
   \tilde{\Sigma}_f  \|^2_{L^2(\Omega)} \mathrm{d}t \bigg) \\
   \times \mathrm{exp} \bigg(\frac{T}{\mu_f} + \int_0^{T}
   \Big(\|\partial_x \tu \|_{L^\infty(\Omega)} + \frac{2}{\mu_f} \big(
   \|\tilde{\Sigma}_f \|_{L^\infty(\Omega)} + \kappa_f (\gamma_f+1)
   \|\tilde{\rho}_f \|_{L^\infty(\Omega)}^{\gamma_f} \big) \Big)
   \bigg).
 \end{multline*}
 All the terms can be controlled using \ref{it_bound_rho} and
 \ref{it_H1_bound_tensors}, except $\int_0^{T}\|\partial_x
 \tu\|_{L^\infty(\Omega)}\mathrm{d}t$. This latter term can be
 bounded using lemmas \ref{lem_uf} and \ref{lem_RkpsRk} (corresponding respectively
 to the contributions of $\|\partial_x \tu\|_{L^{\infty}(\mathcal F)}$
 and $\|\partial_x \tu\|_{L^{\infty}(\Omega \setminus \mathcal F)}$).
 Then, for a sufficiently
 small time $T_1\leq T_0$,
 \[
   \int_0^{T_1}\|\partial_x \tu \|_{L^\infty(\Omega)}
   \mathrm{d}t < \frac12,
 \]
 so that on $(0,T_1):$
 \begin{multline*}
   \|\dv_x\tilde{\rho}_f \|^2_{L^2(\Omega)} \leq \Big( \|
   \partial_x  {\rho}_f^{0} \|^2_{L^2(\Omega)} +
   \frac{2}{\mu_f}\| {\rho}_f^{0} \|^2_{L^\infty(\Omega)}
   K \Big) \\
   \times \mathrm{exp} \Big( \frac{T_1}{\mu_f} + \frac12 +
   \frac{2}{\mu_f} \sqrt{T_1K} + T_1\kappa_f(\gamma_f+1)2
   ^{\gamma_f} \| \bar{\rho}_f^{0} \|^{\gamma_f}_{L^\infty(\Omega)}
   \Big) .
 \end{multline*}
 This completes the proof.

%

\end{document}